\newcommand{\ie}{{\itshape i.e.} }
\newcommand{\cf}{{\itshape cf.} }
\newenvironment{flushdesc}%
  {\begin{list}{}{\setlength{\labelwidth}{0pt}
   \setlength{\itemindent}{-12pt}
   \setlength{\leftmargin}{12pt}
   }}%
  {\end{list}}
\renewcommand{\O}{\mathcal O}
\renewcommand{\m}{\mathfrak{m}}
\DeclareMathOperator{\length}{\ell}
\title{A survey of test ideals}
\author{Karl Schwede and Kevin Tucker}
\thanks{The first author was partially supported by NSF DMS 1064485/0969145}
\thanks{The second author was partially supported by a National Science Foundation Postdoctoral Fellowship}
\subjclass[2000]{14B05, 13A35}
\keywords{test ideal, multiplier ideal, tight closure, log pair, $F$-split, $F$-singularities, $F$-pure, $F$-regular, $F$-rational, $F$-injective, Hilbert-Kunz multiplicity, $F$-signature}
\address{Department of Mathematics\\ The Pennsylvania State University\\ University Park, PA, 16802, USA}
\email{schwede@math.psu.edu}
\address{Department of Mathematics\\ University of Utah\\ Salt Lake City, UT, 84112, USA}
\email{kevtuck@math.utah.edu}
\date{}
\renewcommand{\:}{\colon}
\begin{document}

\maketitle


\setcounter{tocdepth}{1}
\tableofcontents
\numberwithin{equation}{theorem}

\section{Introduction}

Test ideals were first introduced by Mel Hochster and Craig Huneke in their
celebrated theory of tight closure  \cite{HochsterHunekeTC1}, and
since their invention have been closely tied to the theory of Frobenius splittings
\cite{MehtaRamanathanFrobeniusSplittingAndCohomologyVanishing,
  RamananRamanathanProjectiveNormality}. Subsequently, test ideals
have also found application far beyond their original scope to
questions arising in complex analytic geometry.  In this paper we give
a contemporary
survey of test ideals and their wide-ranging applications.

The test ideal has become a fundamental tool in the study of positive
characteristic algebraic geometry and commutative algebra.
To each ring $R$ of prime characteristic $p > 0$, one can associate a test ideal $\tau(R)$
which reflects properties
of the singularities of $R$.
If $R$ is regular, then $\tau(R) = R$;
more generally, if the singularities of $R$ are mild, one expects that
$\tau(R)$ is close to or equal to $R$. Conversely, severe
singularities give rise to small test ideals.

While the name \emph{test} ideal comes from Hochster and Huneke's original description as the so-called test
elements in the theory of tight closure, we initially define them
herein somewhat more directly without any reference to tight closure.
Briefly, our approach makes use of pairs $(R, \phi)$ where $R$ is an integral domain and $\phi \: R^{1/p} \to R$ is an
$R$-module homomorphism, a familiar setting to readers comfortable with the
theory of Frobenius splittings.  This perspective has numerous advantages.
In addition to the relative simplicity of the definition of the test
ideal, this setting also provides a natural segue into the connection
between the test ideal and the multiplier ideal.  Nevertheless, we do
however include a short section on the classical definition via tight
closure theory.  In addition, we also include sections focusing on various
related measures of singularities in positive characteristic besides
the test ideal, such as $F$-rationality and Hilbert-Kunz multiplicity.

We have tried to make the sections of this paper modular, attempting to
minimize the reliance of each individual section on the previous sections.
Following the initial discussion on preliminaries and notation, the main
statements throughout the remainder of the document can all be read
independently.   In addition, we have included numerous exercises scattered throughout the text.  It should be noted that while many of the exercises are quite easy, some are substantially more difficult.  We denote these with a *.

We hope that this survey will be readable and useful to a wide variety of potential audiences.
In particular, we have three different audiences in mind:
\begin{itemize}
 \item[(i)]  Readers working in characteristic $p >
   0$ commutative algebra who wish to understand generalizations to ``pairs'' and
   connections between test ideals and algebraic geometry.  These
   individuals will probably be most interested in Sections \ref{sec.TestIdealInitialDefinitions}, \ref{sec.TestIdealIdealPairs}, and~\ref{sec.AlgebrasOfMaps}.
 \item[(ii)]  Readers familiar with Frobenius
   splitting techniques who wish to learn of the language and methods
   used by their counterparts studying tight closure (Section
   \ref{sec.TightClosure}) and connections to the
   minimal model program (Section \ref{sec.ConnectionsWithAG}).  These
   individuals may wish to skim Section \ref{sec.Preliminaries}.  They may additionally find
   sections \ref{sec.TestIdealInitialDefinitions}  and \ref{sec.OtherMeasures} useful.
 \item[(iii)]  Readers with a background in complex analytic and algebraic
   geometry working on notions related to the multiplier ideal or the minimal
   model program who wish to learn about characteristic $p > 0$
   methods.  The most useful material for these individuals is likely
   found in Sections \ref{sec.Preliminaries}, \ref{sec.TestIdealInitialDefinitions}, \ref{sec.ConnectionsWithAG} and \ref{sec.TestIdealIdealPairs}.
\end{itemize}

This survey is not, however, designed to be an introduction to tight
closure, as there are already several excellent surveys and resources on the subject, see
\cite{HunekeTightClosureBook,SmithTightClosure} and \cite[Chapter 10]{BrunsHerzog}.  In addition,
the reader interested in Frobenius splitting and related vanishing
cohomology theorems is referred to
\cite{BrionKumarFrobeniusSplitting}; we shall not have occasion to
discuss global vanishing theorems in this survey.

In the appendices to this paper, we very briefly review the notions of
Cohen-Macaulay and Gorenstein rings, as well as several forms of
duality which are used minimally throughout the body of the paper.
These include local duality, Matlis duality and Grothendieck
duality. Also included is a short summary of the formalism of divisors on normal
varieties from a very algebraic point of view.

As those experts already familiar with the technical language
surrounding the development of test ideas will
be quick to notice, the terminology and notation in this paper also differs from
that used historically in the following way:

\vspace{3mm}
\noindent
\fbox{\parbox{\textwidth - 3mm}{{\bfseries Convention.  }   When referring to the test ideal $\tau(R)$ throughout this paper, we always
 refer to the \emph{big (or non-finitistic) test ideal},
 often denoted by $\widetilde{\tau}(R)$ or $\tau_b(R)$ in the
 literature.  We will call the classical test ideal, \ie the test ideal
 originally introduced by Hochster and Huneke, the \emph{finitistic
   test ideal} and denote it rather by $\tau_{\textnormal{fg}}(R)$.}}


\vspace{3mm}
\noindent
We make this simplifying convention largely because we believe there
is now consensus that the \emph{big test ideal} is the most important
notion to study.  Of course, the two notions (finitistic and
non-finitistic) coincide in most contexts and are conjectured to be
equivalent (Conjecture \ref{conj.FinitisticTestIdealIsTestIdeal}).

\vspace{5mm}
\noindent{\it{Acknowledgements: } }  The  authors would like to thank Ian Aberbach, Manuel Blickle, Florian Enescu, Neil Epstein, Zsolt Patakfalvi, Shunsuke Takagi and Michael Von Korff for useful comments on previous drafts and also for discussions related to this project.  We would also like to thank the referee, Lance Miller and Mordechai Katzman for an extremely careful reading of a previous draft of this paper, with numerous useful comments.  Parts of this paper are also based on notes the first author wrote while teaching a course on $F$-singularities at the University of Utah in Fall 2010.

\section{Characteristic $p$ preliminaries}
\label{sec.Preliminaries}

\begin{setting*}
Throughout this paper all rings are integral domains essentially of finite type over a field
$k$.  In this context, the word ``essentially'' means that $R$ is
obtained from a finitely generated \mbox{$k$-algebra} by localizing at a
multiplicative system.
In this section, that field $k$ is always perfect of prime positive characteristic $p > 0$.
\footnote{Essentially all of the positive
  characteristic material in this paper
  can easily be generalized to the setting of reduced $F$-finite rings.  In addition,
  large portions of the theory extend to the setting of excellent local rings.}
\end{setting*}

\subsection{The Frobenius endomorphism}
\label{sec:frob-endom}

When working in characteristic $p>0$, the
Frobenius or $p$-th power endomorphism is a powerful tool which can be thought of in several equivalent ways.
First and foremost, it is the ring homomorphism $F \: R \to R$ given
by $r \mapsto r^{p}$.
However, in practice it is often convenient to
distinguish between the copies of $R$ serving as the source and
target.  To that end, consider the
set $R^{1/p}$ of all $p$-th roots of elements of $R$ inside a fixed algebraic closure of the
fraction field of $R$.  The set $R^{1/p}$ is closed under addition and
multiplication, and it
forms a ring abstractly isomorphic to $R$ itself (by taking $p$-th roots).  The
inclusion $R \subseteq R^{1/p}$ is naturally identified with the
Frobenius endomorphism of $R$ and gives $R^{1/p}$ the structure of an $R$-module.

More generally, denoting by
$R^{1/p^e}$ the set of $p^e$-th roots of elements of $R$ and
iterating the above procedure gives
\[
R \subseteq R^{1/p} \subseteq R^{1/p^{2}} \subseteq \cdots \subseteq
R^{1/p^{e}} \subseteq R^{1/p^{e+1}} \subseteq \cdots
\]
where each inclusion is identified with the Frobenius
endomorphism of $R$.  Thus, as before $R^{1/p^e}$ is a ring abstractly isomorphic
to $R$, and the inclusion $R \subseteq R^{1/p^{e}}$ is
identified with the $e$-th iterate $F^{e} \: R \to R$ of Frobenius
given by $r \mapsto r^{p^{e}}$.
For any ideal $I = \langle z_1, \dots, z_m \rangle \subseteq R$, we
write  $I^{1/p^e} =\langle z_1^{1/p^e}, \dots, z_m^{1/p^e} \rangle_{R^{1/p^e}}
$ to denote the ideal (in $R^{1/p^{e}}$) of $p^{e}$-th roots of elements of $I$.
Again, we have that $R^{1/p^{e}}$ is an $R$-module via the inclusion $R \subseteq R^{1/p^{e}}$.

\begin{exercise}
\label{ex.PolynomialRingFreeModule}
Consider the polynomial ring $S = k[x_1, \dots, x_d]$.  Show that $S^{1/p^e}$ is
a free $S$-module of rank $p^{ed}$ with $S$-basis $\{ x_1^{\lambda_1/p^e} \dots x_d^{\lambda_d/p^e} \}_{0 \leq \lambda_i \leq p^e - 1}$.
\end{exercise}

If $M$ is any $R$-module, the (geometrically motivated) notation $F^e_*
M$ is often used to denote the
corresponding $R$-module coming from
restriction of scalars for $F^{e}$.  Thus, $M$ and $F^{e}_{*}M$ agree
as both sets and Abelian groups.  However, if $F^{e}_{*}m$ denotes
the element of $F^{e}_{*}M$ corresponding to $m \in M$,  we  have $r \cdot
F^{e}_{*}m = F^{e}_{*}(r^{p^{e}} \cdot m)$ for $r \in R$ and $m \in
M$.  It is easy to see that $F^{e}_{*}R$ and $R^{1/p^{e}}$ are
isomorphic $R$-modules by identifying $F^{e}_{*}r$ with $r^{1/p^{e}}$
for each $r \in R$.  While we have taken preference to the use of
$R^{1/p^{e}}$ throughout, it can be very helpful to keep both
perspectives in mind.

\begin{remark}
  We caution the reader that the module $F^{e}_{*}M$ is quite
  different from that which is commonly denoted $F^{e}(M)$ originating
  in \cite{PeskineSzpiroDimensionProjective}.  This latter notation
  coincides rather with \mbox{$M \tensor_{R} F^{e}_{*}R$} considered as an
  $R$-$F^{e}_{*}R$ bimodule.
\end{remark}

\begin{exercise}
Show that $F^{e}_{*}(\blank)$ is an exact functor on the category
of \mbox{$R$-modules}.  Conclude that $F^{e}_{*}(R/I)$ and
$R^{1/p^{e}}/I^{1/p^{e}}$ are isomorphic $R$-modules (and
\mbox{($F^{e}_{*}R = R^{1/p^{e}}$)-modules}) for any ideal $I
\subseteq R$.
\end{exercise}

\begin{lemma}
$R^{1/p^e}$ is a
finitely generated $R$-module.
\end{lemma}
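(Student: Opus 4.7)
The plan is to reduce the claim to the case of a polynomial ring, where Exercise \ref{ex.PolynomialRingFreeModule} already shows that $S^{1/p^e}$ is free (of rank $p^{ed}$) over $S = k[x_1, \ldots, x_d]$, and in particular finitely generated over $S$.

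First, I would handle the case in which $R$ is finitely generated as a $k$-algebra. Writing $R \cong S/I$ with $S$ a polynomial ring over $k$, the exactness of the Frobenius pushforward applied to the surjection $S \twoheadrightarrow R$ yields a surjection $S^{1/p^e} \twoheadrightarrow R^{1/p^e}$ of $S$-modules (this identifies $R^{1/p^e}$ with $S^{1/p^e}/I^{1/p^e}$, as in the preceding exercise). Since $S^{1/p^e}$ is finitely generated over $S$, its quotient $R^{1/p^e}$ is too, and because the $S$-action on $R^{1/p^e}$ factors through $R$, we conclude that $R^{1/p^e}$ is finitely generated over $R$.

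Finally, in the general essentially of finite type case, write $R = W^{-1} R_0$ with $R_0$ a finitely generated $k$-algebra, and by the previous step fix $R_0$-module generators $g_1, \ldots, g_m$ for $R_0^{1/p^e}$. I claim the same $g_i$ continue to generate $R^{1/p^e}$ as an $R$-module: given $\alpha = (s/w)^{1/p^e} \in R^{1/p^e}$ with $s \in R_0$ and $w \in W$, one has $(w \cdot \alpha)^{p^e} = w^{p^e} \cdot (s/w) = s w^{p^e - 1} \in R_0$, so $w \cdot \alpha = (s w^{p^e - 1})^{1/p^e}$ lies in $R_0^{1/p^e}$ and hence in the $R_0$-span of the $g_i$; multiplying by the unit $w^{-1} \in R$ then expresses $\alpha$ itself as an $R$-linear combination of the $g_i$. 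The only mild subtlety anywhere in the argument is precisely this interaction of $p^e$-th roots with localization, and the computation above settles it; the rest is immediate from the polynomial case and the exactness of Frobenius pushforward.
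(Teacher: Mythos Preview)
Your argument is correct and follows essentially the same route as the paper's proof: both reduce to the polynomial ring case via Exercise~\ref{ex.PolynomialRingFreeModule}, pass to quotients, and then handle localization. The only cosmetic difference is that the paper packages the localization step as the identity $W^{-1}\big((S/I)^{1/p^e}\big) = \big((W^{p^e})^{-1}(S/I)\big)^{1/p^e} = \big(W^{-1}(S/I)\big)^{1/p^e}$, whereas you unwind this explicitly by clearing the denominator $w$; the content is the same.
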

\begin{proof}
Because $R$ is essentially of finite type over $k$, we  may write $R =
W^{-1}(k[x_1, \dots, x_n] / I)$ where $I$ is an ideal in $S = k[x_1,
\dots, x_n]$ and $W$ is a multiplicative system in $S/I$.  First
notice that $(S/I)^{1/p^e} = S^{1/p^e} / I^{1/p^e}$ is certainly a
finite $S/I$-module by Exercise \ref{ex.PolynomialRingFreeModule}.
But then we have that $W^{-1} \left( (S/I)^{1/p^e} \right)$ is a
finitely generated $W^{-1} (S/I)$-module, and so the result is proven
after observing
$
W^{-1} \left( (S/I)^{1/p^e} \right) = \left( (W^{p^e})^{-1} (S/I) \right)^{1/p^e} = \left( W^{-1} (S/I) \right)^{1/p^e}$.
\end{proof}

Test ideals are measures of singularities of rings of characteristic
$p > 0 $, and will be defined initially through the use of a homomorphism
$\phi \in \Hom_{R}(R^{1/p^{e}},R)$.  The
following result which demonstrates that it is reasonable to use
properties of $R^{1/p^e}$ to quantify the singularities of $R$.

\begin{theorem}\cite{KunzCharacterizationsOfRegularLocalRings}
\label{thm.KunzCharacterizationsOfRegular}
$R$ is regular if and only if $R^{1/p^e}$ is a \emph{locally}-free $R$-module.
\end{theorem}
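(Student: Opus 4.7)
The plan is to reduce both implications to the local case by localizing at each maximal ideal, since regularity and local freeness of the finitely generated module $R^{1/p^e}$ can be verified there.  So I may assume $(R,\m)$ is Noetherian local; since $R$ is essentially of finite type over the perfect field $k$, the residue field $\kappa = R/\m$ is a finite algebraic (hence perfect) extension of $k$ by the Nullstellensatz.

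For the forward direction, assume $(R,\m)$ is regular local with regular system of parameters $x_1,\ldots,x_d$.  A finitely generated flat module over a local ring is free, so by the preceding lemma it suffices to show that $R^{1/p^e}$ is $R$-flat.  By the local flatness criterion, a finitely generated module over a regular local ring is flat if and only if the regular system of parameters acts on it as a regular sequence.  Under the ring isomorphism $R \cong R^{1/p^e}$ sending $r \mapsto r^{1/p^e}$, multiplication by $x_i$ on $R^{1/p^e}$ pulls back to multiplication by $x_i^{p^e}$ on $R$, so it suffices to show $x_1^{p^e},\ldots,x_d^{p^e}$ is a regular sequence in $R$.  This holds because these elements remain a system of parameters in the $d$-dimensional Cohen--Macaulay (because regular) ring $R$.

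For the converse, assume $R^{1/p^e}$ is free of some rank $N$ over $R$, and I would compute $N$ in two ways.  At the generic point, $N = [K^{1/p^e}:K] = p^{ed}$ where $K = \Frac(R)$, using that $k$ is perfect and $K/k$ has transcendence degree $d = \dim R$.  At the closed point, Nakayama's lemma gives $N = \dim_\kappa(R^{1/p^e}/\m R^{1/p^e})$; the $p^e$-th root isomorphism identifies $R^{1/p^e}/\m R^{1/p^e}$ with $R/\m^{[p^e]}$ as rings, and the perfectness of $\kappa$ ensures these coincide in length as $R$-modules.  Equating the two expressions for $N$ yields the identity $\ell(R/\m^{[p^e]}) = p^{ed}$ for every $e \geq 1$.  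The main obstacle is then to deduce regularity from this length identity: I would invoke the classical Kunz inequality $\ell(R/\m^{[p^e]}) \geq p^{ed}$, valid for every Noetherian local ring of characteristic $p$ and dimension $d$, with equality characterizing regularity.  The inequality is proved by filtering $R/\m^{[p^e]}$ by powers of $\m$ and comparing with the Hilbert--Samuel polynomial; equality then forces the embedding dimension $\mu(\m)$ to coincide with $\dim R$, which by definition means $R$ is regular.
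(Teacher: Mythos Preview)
Your forward direction is correct and takes a genuinely different route from the paper. The paper reduces to the polynomial-ring computation of Exercise~\ref{ex.PolynomialRingFreeModule} (presumably via completion and the Cohen structure theorem); you instead verify flatness of $R^{1/p^e}$ directly by checking that a regular system of parameters acts on it as a regular sequence. This is a clean argument and requires no hypothesis on the residue field.

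The converse direction, however, has a real gap. The Nullstellensatz guarantees algebraic residue fields only at maximal ideals of a \emph{finite type} $k$-algebra; it says nothing for rings merely essentially of finite type. For example, $R = k[x,y]_{(x)}$ is local and essentially of finite type over $k$, but its residue field is $k(y)$, which is transcendental over $k$ and not perfect. Both of your rank computations then break: at the generic point $[K^{1/p^e}:K] = p^{e\cdot\mathrm{trdeg}_k K}$ and $\mathrm{trdeg}_k K$ need not equal $\dim R$; at the closed point the ring isomorphism $R^{1/p^e}/\m R^{1/p^e} \cong R/\m^{[p^e]}$ carries the natural $\kappa$-action on the left to the Frobenius-twisted action on the right, so in fact $\dim_\kappa(R^{1/p^e}/\m R^{1/p^e}) = [\kappa^{1/p^e}:\kappa]\cdot\ell(R/\m^{[p^e]})$. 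The fix is to track these factors: writing $R = S_P$ for a finite type $k$-domain $S$, the dimension formula gives $\mathrm{trdeg}_k K - \mathrm{trdeg}_k \kappa = \dim R$, and the extra powers of $p$ cancel to yield $\ell(R/\m^{[p^e]}) = p^{ed}$ as you wanted. From there you are invoking exactly the theorem of Kunz that the paper cites rather than proves---but be aware that your one-sentence sketch of that step via the Hilbert--Samuel filtration does not capture the actual argument, which rests on the more delicate inequalities of Lech that the paper also cites.
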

\begin{proof}
The forward direction of the proof follows by reducing to the case of
Exercise \ref{ex.PolynomialRingFreeModule}, while the converse direction is
more involved; see \cite{KunzCharacterizationsOfRegularLocalRings} and \cite{LechInequalitiesRelatedToCertainCouples}.
\end{proof}

\subsection{$F$-purity}
\label{sec:f-purity}
Rather than requiring that $R^{1/p^{e}}$ be a free $R$-module, one might
consider the weaker condition that $R$ is a direct summand of $R^{1/p^e}$.  To that end, recall that an inclusion of rings $A
\subseteq B$ is called \emph{split} if there is an $A$-module
homomorphism $s \: B \to A$ such that $s|_{A} = \id_{A}$ (in which
case $B$ is isomorphic as an $A$-module to $A \oplus \ker(s)$, and $s$
is called a \emph{splitting of $A \subseteq B$}).

\begin{definition}
$R$ is
\emph{$F$-pure}\footnote{A
  splitting of $R \subseteq R^{1/p}$ is referred to as an $F$-splitting. At times, $F$-pure rings are also known as
  $F$-split, but we caution the reader that (particularly when in a
  non-affine setting) these terms are not always interchangeable.  } if
the inclusions $R \subseteq R^{1/p^e}$ are split.
\end{definition}

\begin{exercise}
  Suppose that $R$ is $F$-pure.  Show that, for every $R$-module $M$
  and all $e \geq 1$,
  the natural map $M \to M \tensor_{R}R^{1/p^{e}}$ is injective.

  In the setting of this paper -- where $R$ is essentially of finite
  type over a perfect field $k$ -- the converse statement also holds
  \cite{HochsterCyclicPurity}, but may fail in general.   For an arbitrary ring, the
  injectivity of $M \to M \tensor_{R}R^{1/p^{e}}$ for all $M$ is taken
  to be the definition of $F$-purity.
\end{exercise}

\begin{exercise}
Show that if $R \subseteq R^{1/p^{e}}$ is split for some $e \geq 1$,
then it is split for all $e \geq 1$.
\end{exercise}

\begin{exercise}
Suppose that $\bq \in \Spec R$ is a point such that $R_{\bq}$ is $F$-pure.  Show that there exists an open neighborhood $U \subseteq \Spec R$ of $\bq$ such that $R_{\bp}$ is $F$-pure for every point $\bp \in U$.  \\
\emph{Hint:}  Prove $R \subseteq R^{1/p^e}$ splits if and only if ``evaluation at~1'' $\Hom_R(R^{1/p^e}, R) \to R$ is surjective.
\end{exercise}

\begin{exercise}
Suppose that for every maximal ideal $\bm \in \Spec R$, $R_{\bm}$ is $F$-pure.  Show that $R$ is also $F$-pure.
\end{exercise}

In  Theorem \ref{thm.FedderCriterion} below, we exhibit a simple way of determining whether $R$ is $F$-pure.

\begin{definition} [Frobenius power of an ideal]
\label{def.FrobeniusPowerOfIdeal}
Suppose $I =
\langle y_1, \dots, y_m \rangle \subseteq R$ is an ideal.  Then for
any integer $e > 0$, we set $I^{[p^e]}$ to be the ideal
$\langle y_1^{p^e}, \dots, y_n^{p^e} \rangle_R$.
\end{definition}

\begin{exercise}
Show that $(I^{[p^{e}]})^{1/p^{e}} = I R^{1/p^{e}}$, and conclude $I^{[p^e]}$ is independent of the choice of generators of $I$.
\end{exercise}

\begin{exercise}
\label{exer:testforbracketsinregrings}
  Suppose that $R$ is a regular local ring and $I \subset R$ is an
  ideal.  If $x \in R$, show that
$x \in I^{[p^{e}]}$ if and only if $\phi(x^{1/p^{e}}) \in I$ for
  all $ \phi \in \Hom_{R}(R^{1/p^{e}},R) $ .
\end{exercise}

\begin{theorem}[Fedder's Criterion] \cite[Lemma 1.6]{FedderFPureRat}
\label{thm.FedderCriterion}
Suppose that $S = k[x_1, \dots, x_n]$ and that $R = S / I$ is a
quotient ring.  Then for any point $\bq \in \Spec R = V(I)  \subseteq \Spec S$, the local ring
$R_{\bq}$ is $F$-pure if and only if $(I^{[p]} : I ) \not\subseteq \bq^{[p]}$.  (Notice we are abusing notation by identifying $\bq \in \Spec R$ with its pre-image in $\Spec S$).
\end{theorem}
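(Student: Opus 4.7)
The plan is to translate $F$-purity at $\bq$ into a surjectivity statement for an evaluation map, and then compute that map in terms of $\Hom_S(S^{1/p},S)$, which is under control because $S$ is regular. To begin, $R_\bq$ is $F$-pure iff $R_\bq \subseteq R_\bq^{1/p}$ admits a splitting, iff the evaluation-at-$1$ map $\epsilon\:\Hom_{R_\bq}(R_\bq^{1/p},R_\bq) \to R_\bq$ is surjective, iff $\im(\epsilon) \not\subseteq \bq R_\bq$. Since $R^{1/p}$ is a finitely presented $R$-module and $\Hom$ commutes with localization in that case, this is equivalent to $\im(\Hom_R(R^{1/p},R) \to R) \not\subseteq \bq$.

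Next, to describe $\Hom_R(R^{1/p},R)$ in terms of $S$, I would use that $S^{1/p}$ is $S$-projective (in fact free, by Exercise~\ref{ex.PolynomialRingFreeModule}): any $\bar\phi\:(S/I)^{1/p} \to S/I$ precomposes with $S^{1/p} \twoheadrightarrow (S/I)^{1/p}$ and then lifts along $S \twoheadrightarrow S/I$ to some $\phi\:S^{1/p} \to S$ with $\phi(I^{1/p}) \subseteq I$. This yields the identification
\[
\Hom_R(R^{1/p},R) \;\cong\; \frac{\{\phi \in \Hom_S(S^{1/p},S) : \phi(I^{1/p}) \subseteq I\}}{\{\phi \in \Hom_S(S^{1/p},S) : \phi(S^{1/p}) \subseteq I\}},
\]
with $\epsilon$ becoming $\phi \mapsto \phi(1) \bmod I$.

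The heart of the proof is to produce a distinguished $\Phi \in \Hom_S(S^{1/p},S)$ that generates the whole $\Hom$ module under the $S^{1/p}$-action by premultiplication. Using the monomial basis of Exercise~\ref{ex.PolynomialRingFreeModule}, I would let $\Phi$ be the functional dual to $(x_1 \cdots x_n)^{(p-1)/p}$ and verify by a direct basis computation that $\{x^{\be/p}\Phi\}_{0 \le \be_i \le p-1}$ exhausts the $S$-dual basis. Thus every $\phi$ has the unique form $\phi = \Phi(s^{1/p}\cdot-)$ for some $s \in S$, and Exercise~\ref{exer:testforbracketsinregrings} (combined with the fact that $(I^{[p]}:I)$ and $I^{[p]}$ are ideals, which lets us absorb arbitrary $S$-multipliers into $s$) translates the two conditions in the display into $s \in (I^{[p]}:I)$ and $s \in I^{[p]}$, respectively.

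Putting this together, $\im(\epsilon) = \Phi((I^{[p]}:I)^{1/p}) \bmod I$, so $R_\bq$ is $F$-pure iff there exists $s \in (I^{[p]}:I)$ with $\Phi(s^{1/p}) \notin \bq$. A final application of Exercise~\ref{exer:testforbracketsinregrings} equates this with $(I^{[p]}:I) \not\subseteq \bq^{[p]}$: forward is immediate from the exercise, while for the converse, any $s' \in (I^{[p]}:I) \setminus \bq^{[p]}$ yields some $\phi = t^{1/p}\Phi$ with $\Phi((ts')^{1/p}) \notin \bq$, and $ts' \in (I^{[p]}:I)$. I expect the main obstacle to be the explicit verification that $\Phi$ generates $\Hom_S(S^{1/p},S)$ freely over $S^{1/p}$; this is the only step that genuinely uses the regularity of $S$, and it boils down to combinatorics on the monomial basis of $S^{1/p}$.
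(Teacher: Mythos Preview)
Your proposal is correct and follows essentially the same route as the paper's sketch: lift maps along the surjection $S\twoheadrightarrow R$ using freeness of $S^{1/p}$, identify $\Hom_S(S^{1/p},S)$ as a free rank-one $S^{1/p}$-module via the explicit generator $\Phi$, show that the maps descending to $R$ are exactly $(I^{[p]}:I)^{1/p}\cdot\Phi$, and then match surjectivity at $\bq$ with $(I^{[p]}:I)\not\subseteq\bq^{[p]}$. Your use of Exercise~\ref{exer:testforbracketsinregrings} makes the last two steps cleaner than the paper's outline; just note that the exercise is stated for regular \emph{local} rings, so you should either remark that its proof only needs global freeness of $S^{1/p}$ (which you have), or localize at $\bq$ before invoking it.
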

\begin{proof}
We sketch the main ideas of the proof and leave the details to the reader.  First observe that every map $\phi \in \Hom_R(R^{1/p^e}, R)$ is the quotient of a map $\psi \in \Hom_S(S^{1/p^e}, S)$ (use the fact that $S^{1/p^e}$ is a projective $S$-module).
Next prove that $\Hom_S(S^{1/p^e}, S)$ is isomorphic to $S^{1/p^e}$ as an $S^{1/p^e}$-module.  Finally, show that $(I^{[p^e]} : I)^{1/p^e} \cdot \Hom_S(S^{1/p^e}, S)$ corresponds exactly to those elements of $\Hom_S(S^{1/p^e}, S)$ which come from $\Hom_R(R^{1/p^e}, R)$.
Once this correspondence is in hand, show that the elements $\phi \in \Hom_R(R^{1/p^e}, R)$ that are surjective at $R_{\bq}$ are in bijective correspondence with the elements $x \in (I^{[p^e]} : I)$ which are not contained in $\bq^{[p^e]}$.
\end{proof}

\begin{exercise}[Coordinate Hyperplanes are $F$-pure]
Suppose that $S = k[x_1, \dots, x_n]$ and $I =  \langle x_1
\cdots x_n \rangle$.   Show that $S/I$ is $F$-pure.
\end{exercise}

\begin{starexercise}[Elliptic curves]
Show that $R = \bF_p[x,y,z]/ \langle x^3 + y^3 + z^3 \rangle$ is not $F$-pure if $p = 2, 3, 5, 11$, but is $F$-pure if $p = 7, 13$.  Generally, show that $R$ is $F$-pure if and only if $p \equiv 1 \mod 3$, in which case the associated elliptic curve is ordinary (see \cite[Page 332]{Hartshorne}).
\end{starexercise}

\begin{exercise}
Suppose that $S = \bF_p[x, y, z]$ and that $f = xy - z^2$ and $g = x^4
+ y^4 + z^4$.  Show that, for any choice of $p$,  $S/\langle f \rangle
$ is always $F$-pure while $S/\langle g \rangle$ is never $F$-pure.
\end{exercise}

\begin{exercise} \cite[Proposition 5.31]{HochsterRobertsFrobeniusLocalCohomology}
A reduced ring $R$ of characteristic $p > 0$ with total quotient ring
$K$ is called weakly normal if it satisfies the following property: for every $x \in K$, $x^p \in R$ automatically implies $x \in R$ as well. Prove that any $F$-pure ring is weakly normal.  \\
{\emph{Hint: }}  First check that a splitting of $R \subseteq R^{1/p}$
can be extended to a splitting of $K \subseteq K^{1/p}$.
\end{exercise}

\section{The test ideal}
\label{sec.TestIdealInitialDefinitions}

\begin{setting*}
In this section as before, all rings are integral domains of essentially finite type over a perfect field of characteristic $p > 0$.
\end{setting*}

While the test ideal was first described as an auxiliary component of
tight closure theory, we give a description of the
test ideal without reference to tight closure in this section.  This description has roots in
\cite{SmithFRatImpliesRat, LyubeznikSmithCommutationOfTestIdealWithLocalization, HaraTakagiOnAGeneralizationOfTestIdeals}; see also \cite[Theorem
6.3]{SchwedeCentersOfFPurity} for further statements and details.

\subsection{Test ideals of map-pairs}
We begin by introducing test ideals for pairs $(R, \phi)$, where
the addition of a homomorphism $\phi \in \Hom_{R}(R^{1/p^{e}},R)$ in
fact helps to simplify the definition.

\begin{definition}
\label{def.TestIdealDefn}
 Fix an
integer $e > 0$ and a non-zero $R$-linear map $\phi \: R^{1/p^e}
\to R$ (for example, a splitting of $R \subseteq R^{1/p^{e}}$).
We define the \emph{test ideal $\tau(R, \phi)$ } to be the unique smallest non-zero ideal $J \subseteq R$ such that $\phi(J^{1/p^e}) \subseteq J$.
\end{definition}

We make two initial observations about this definition:
\begin{itemize}
\item[(1)]  It is in no way clear that there is such a smallest ideal!  (More on this soon.)
\item[(2)]  The choice of $\phi$ can wildly change the test ideal, as in
  Exercise \ref{ex.ChoiceOfPhiMatters} below. In particular, $\tau(R,
  \phi)$ doesn't just reflect properties of $R$, but rather incorporates those of
  $\phi$ as well.
\end{itemize}

\begin{remark}
 If $\phi \: R^{1/p^e} \to R$ is as above, and $J \subseteq R$ is an
 ideal such that $\phi(J^{1/p^e}) \subseteq J$, then $J$ is said to be
 \emph{$\phi$-compatible}.  Thus $\tau(R, \phi)$ is the unique smallest non-zero $\phi$-compatible ideal.
\end{remark}

\begin{exercise}  \cf \cite{BlickleBoeckle.MinimalCartier}
 With notation as above, show that $\phi( \tau(R, \phi)^{1/p^e}) = \tau(R, \phi)$.  \\
 \emph{Hint: } Show that $\phi( \tau(R, \phi)^{1/p^e})$ is $\phi$-compatible
\end{exercise}

\begin{exercise}  \cite[Proposition 2.5]{FedderWatanabe}, \cite[Theorem 3.3]{VassilevTestIdeals}, \cite[Theorem 7.1]{SchwedeCentersOfFPurity}
Suppose that in addition $\phi \: R^{1/p^{e}} \to
R$ is surjective (for example, a splitting of $R \subseteq
R^{1/p^{e}}$).  Show that $\tau(R, \phi)$ is a radical ideal. Furthermore, prove that $R / \tau(R, \phi)$ is an $F$-pure ring.
\end{exercise}

\begin{exercise}
\label{ex.ChoiceOfPhiMatters}
Suppose that $R = \bF_2[x,y]$ and recall that $R^{1/2}$ is a free
$R$-module (Exercise \ref{ex.PolynomialRingFreeModule}) with basis $1,
x^{1/2}, y^{1/2}, (xy)^{1/2}$ .  Consider the $R$-linear three maps $\alpha, \beta, \gamma \: R^{1/2} \to R$ defined as follows:
\[
\xymatrix@R=0pt{
R^{1/2} \ar[r]^{\alpha} & R & & R^{1/2} \ar[r]^{\beta} & R  & & R^{1/2} \ar[r]^{\gamma} & R\\
1 \ar@{|->}[r] & 0          & & 1 \ar@{|->}[r] & 0          & & 1 \ar@{|->}[r] & 1 \\
x^{1/2} \ar@{|->}[r] & 0    & & x^{1/2} \ar@{|->}[r] & 1    & & x^{1/2} \ar@{|->}[r] & 0 \\
y^{1/2} \ar@{|->}[r] & 0    & & y^{1/2} \ar@{|->}[r] & 0    & & y^{1/2} \ar@{|->}[r] & 0 \\
(xy)^{1/2} \ar@{|->}[r] & 1 & & (xy)^{1/2} \ar@{|->}[r] & 0 & & (xy)^{1/2} \ar@{|->}[r] & 0 \\
}
\]
Prove that $\tau(R, \alpha) = R$, $\tau(R, \beta) = \langle y \rangle$ and $\tau(R, \gamma) = \langle xy \rangle$.
\end{exercise}

Now we turn our attention to the question of existence.  We make use
of the following somewhat technical lemma, which has its origins in tight closure theory.

\begin{lemma}\cite[Section 6]{HochsterHunekeTC1}, \cite[Proposition 3.21]{SchwedeTestIdealsInNonQGor}
\label{lem.TestElementsExist}
Suppose
that $\phi \: R^{1/p^e} \to R$ is a \mbox{non-zero} $R$-linear map.  Then there exists a non-zero $c \in R$ satisfying the following property:  for every element $0 \neq d \in R$, there exists an integer $n > 0$ such that $c \in \phi^n( (dR)^{1/p^{ne}} )$.  Here $\phi^n$ is defined to be the composition map
\[
\xymatrix{
R^{1/p^{ne}} \ar[rr]^{\phi^{1/p^{(n-1)e}}}  & & R^{1/p^{(n-1)e}} \ar[rr]^{\phi^{1/p^{(n-2)e}}} & & R^{1/p^{(n-2)e}} \ar[r] & \dots \ar[r] & R^{1/p^e} \ar[r]^{\phi} & R.
}
\]
\end{lemma}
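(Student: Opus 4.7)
My plan is to localize $R$ to its regular locus, put $\phi$ into a standard projection form there, and then use iteration to extract the required $c$. Since $R$ is a domain essentially of finite type over a perfect field $k$, generic smoothness combined with Kunz's theorem (Theorem~\ref{thm.KunzCharacterizationsOfRegular}) provides a non-zero $c_1 \in R$ such that $R_{c_1}$ is regular and $R_{c_1}^{1/p^e}$ is a free $R_{c_1}$-module. Since $\phi \neq 0$ has non-zero image in $R$, I pick a non-zero $c_2$ in that image and set $c := c_1 c_2$; then $\phi_c \colon R_c^{1/p^e} \to R_c$ is a surjection from a free module, hence splits, and I may choose an $R_c$-basis $e_1, \ldots, e_N$ of $R_c^{1/p^e}$ with $\phi_c(e_1) = 1$ and $\phi_c(e_j) = 0$ for $j > 1$.

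Next, a direct computation using the identification $R^{1/p^{(k+1)e}} = (R^{1/p^{ke}})^{1/p^e}$ shows that each composed map $\phi^{1/p^{ke}} \colon R^{1/p^{(k+1)e}} \to R^{1/p^{ke}}$ is in fact $R^{1/p^{ke}}$-linear (not merely $R$-linear), and an induction then yields that $\phi_c^n$ is the $R_c$-linear projection onto the distinguished basis element $E^{(n)} := e_1 \cdot e_1^{1/p^e} \cdots e_1^{1/p^{(n-1)e}}$ of the free $R_c$-module $R_c^{1/p^{ne}}$. In particular $\phi_c^n$ is surjective, and $\phi_c^n\bigl((dR_c)^{1/p^{ne}}\bigr)$ is precisely the ideal generated by the $E^{(n)}$-coefficients of $d^{1/p^{ne}} B$ as $B$ ranges over the $R_c$-basis of $R_c^{1/p^{ne}}$.

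The crux---and the main obstacle---is to show that for every non-zero $d \in R$, the ideal $\phi_c^n\bigl((dR_c)^{1/p^{ne}}\bigr) \subseteq R_c$ equals $R_c$ for $n$ sufficiently large. I check this locally at each maximal ideal $\bm \subseteq R_c$: by the Krull intersection theorem, $d \notin \bm^{[p^{ne}]}$ for $n$ large, so $d^{1/p^{ne}}$ is non-zero modulo $\bm R_{c,\bm}^{1/p^{ne}}$. A direct matrix computation, modeled on the polynomial-ring prototype (where $d^{1/p^{ne}} = \sum_i a_i^{1/p^{ne}} x^{i/p^{ne}}$ and the choice $B = x^{(p^{ne}-1-\deg d)/p^{ne}}$ produces the unit $a_{\deg d}^{1/p^{ne}} \in k^\times$ as the $E^{(n)}$-coefficient), exhibits a basis element $B$ whose $E^{(n)}$-coefficient of $d^{1/p^{ne}} B$ is a unit in $R_{c,\bm}$; Cohen's structure theorem reduces the general regular local case to this prototype. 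A Noetherian argument on the descending chain of closed sets $V(\phi_c^n((dR_c)^{1/p^{ne}}))$ then promotes local surjectivity to a uniform global statement.

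Finally, once the main claim holds in $R_c$, one deduces $c^L \in \phi^n((dR)^{1/p^{ne}})$ in $R$ for some $L$ depending on $d$ and $n$. Using the recursion
\[
\phi^{n+k}\bigl((dR)^{1/p^{(n+k)e}}\bigr) = \phi^k\bigl(\phi^n((dR)^{1/p^{ne}})^{1/p^{ke}}\bigr),
\]
obtained by decomposing the composition defining $\phi^{n+k}$, one gets the containment $\phi^{n+k}((dR)^{1/p^{(n+k)e}}) \supseteq \phi^k((c^L R)^{1/p^{ke}})$; a careful further application of the projection structure of the second paragraph, now with $c^L$ in place of $d$, absorbs the excess fractional powers of $c$ through the iteration, yielding $c \in \phi^{n+k}((dR)^{1/p^{(n+k)e}})$ and completing the proof.
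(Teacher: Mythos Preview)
There is a genuine gap in your choice of $c$: requiring only that $\phi_c$ be surjective is not enough for the claim in your third paragraph. Take $R = k[x]$ with $k$ perfect of characteristic~$2$, $e = 1$, and let $\phi \colon R^{1/2} \to R$ be the $R$-linear projection with $\phi(1) = 1$, $\phi(x^{1/2}) = 0$. Then $R$ is already regular and $\phi$ is visibly surjective, so your recipe allows $c_1 = c_2 = c = 1$; your second paragraph correctly gives that $\phi^n$ is projection onto $E^{(n)} = 1$ in the monomial basis of $R^{1/2^n}$. But one computes directly that $\phi^n\bigl((xR)^{1/2^n}\bigr) = \langle x \rangle$ for every $n \ge 1$, so your $c = 1$ fails for $d = x$. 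The polynomial-ring prototype misleads you because the choice $B = x^{(p^{ne}-1-\deg d)/p^{ne}}$ only produces a unit $E^{(n)}$-coefficient when $E^{(n)} = x^{(p^{ne}-1)/p^{ne}}$, i.e.\ when $\phi$ is the \emph{generator} $\Psi$ of $\Hom_R(R^{1/p^e},R)$ as an $R^{1/p^e}$-module; for an arbitrary surjective $\phi$ the distinguished element $E^{(n)}$ can be any basis vector, and the matrix computation breaks down.

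This is exactly the point the paper's sketch isolates: one must invert an element $b$ such that $R_b$ is regular \emph{and} $\phi_b$ generates $\Hom_{R_b}(R_b^{1/p^e},R_b)$ over $R_b^{1/p^e}$ (such $b$ exist because this $\Hom$ has rank one over the fraction field, so the nonzero $\phi$ generates generically). With that correction your second and third paragraphs become a reasonable outline. Your last paragraph is still underspecified, however: the recursion you write gives $\phi^{n+k}\bigl((dR)^{1/p^{(n+k)e}}\bigr) \supseteq \phi^k\bigl((c^L R)^{1/p^{ke}}\bigr)$, but getting from $c^L$ down to a \emph{fixed} $c$ independent of $d$ and $n$ requires a uniform bound on the denominators introduced by the localization, which you have not established. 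The paper remarks that $c = b^3$ suffices when additionally $b \in \phi(R^{1/p^e})$; pinning down such a bound is a genuine further step beyond what you have sketched.
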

\begin{proof}
The proof is involved, and so we omit it here and refer the
interested reader to \cite[Theorem 5.10]{HochsterHunekeFRegularityTestElementsBaseChange}.  However, let us remark that
if $b \in R$ is such that $R_b := R[b^{-1}]$ is regular and also
$\Hom_{R_b}(R^{1/p^e}_b, R_b)$ is generated by $\phi_{b}$ as an
$R^{1/p^e}_b$-module, then $c = b^l$ will suffice for some $l \gg 0$.  In fact, if additionally $b \in \phi(R^{1/p^e})$, then $c = b^3$ will work.
\end{proof}

\begin{remark}
\label{rem.CompletelyStableTestElement}
 The element $c \in R$ constructed above in Lemma \ref{lem.TestElementsExist} is an example something called a \emph{test element}.  It's construction implies that $c$ remains a test element after localization and completion (this condition is also sometimes called being a \emph{completely stable test element}).
\end{remark}

\begin{theorem}
\label{thm.TestIdealPairsExist}
With the notation of Definition \ref{def.TestIdealDefn},  fix any $c \in R$ satisfying the condition of Lemma~\ref{lem.TestElementsExist}.  Then
\[
\tau(R, \phi) = \sum_{n \geq 0} \phi^n \left( (cR)^{1/p^{ne}} \right).
\]
Here $\phi^0$ is defined to be the identity map $R \to R$.
\end{theorem}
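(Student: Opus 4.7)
The plan is to verify directly that $J := \sum_{n \geq 0} \phi^n((cR)^{1/p^{ne}})$ is the unique smallest nonzero $\phi$-compatible ideal. This breaks into three pieces: $J$ is nonzero, $J$ is $\phi$-compatible, and $J$ is contained in every nonzero $\phi$-compatible ideal. Each piece is short once the defining property of $c$ from Lemma \ref{lem.TestElementsExist} is unpacked, so the main conceptual step is really understanding how the iterated map $\phi^n$ interacts with taking $p^e$-th roots.

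First I would record the nonvanishing: the $n=0$ summand is $\phi^0(cR) = cR$, which is nonzero because $c \neq 0$ and $R$ is a domain. Next, I would verify $\phi$-compatibility by a direct computation. Using the factorization $\phi^{n+1} = \phi \circ (\phi^n)^{1/p^e}$ built into the definition of $\phi^{n+1}$, one has
\[
(\phi^n((cR)^{1/p^{ne}}))^{1/p^e} = (\phi^n)^{1/p^e}((cR)^{1/p^{(n+1)e}}),
\]
so applying $\phi$ and summing over $n \geq 0$ gives
\[
\phi(J^{1/p^e}) = \sum_{n \geq 0} \phi^{n+1}((cR)^{1/p^{(n+1)e}}) \subseteq J,
\]
as desired.

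For the minimality, let $I \subseteq R$ be any nonzero $\phi$-compatible ideal. A quick induction on $n$, using the same factorization $\phi^{n+1} = \phi \circ (\phi^n)^{1/p^e}$, shows that $\phi$-compatibility for $\phi$ upgrades to $\phi^n(I^{1/p^{ne}}) \subseteq I$ for every $n \geq 0$. Since $R$ is a domain, we may pick $0 \neq d \in I$; the defining property of $c$ then supplies an $n > 0$ with $c \in \phi^n((dR)^{1/p^{ne}}) \subseteq \phi^n(I^{1/p^{ne}}) \subseteq I$. Hence $c \in I$, and applying the same iterated-compatibility fact gives $\phi^m((cR)^{1/p^{me}}) \subseteq \phi^m(I^{1/p^{me}}) \subseteq I$ for every $m \geq 0$, so $J \subseteq I$.

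Combining these three points, $J$ is a nonzero $\phi$-compatible ideal contained in every other nonzero $\phi$-compatible ideal, so by the defining property (and uniqueness) in Definition \ref{def.TestIdealDefn}, $J = \tau(R, \phi)$. The only real obstacle is notational: one needs to be comfortable with the bookkeeping of applying $\phi$ after extracting $p^e$-th roots, i.e., the identity $\phi^{n+1} = \phi \circ (\phi^n)^{1/p^e}$ and its compatibility with ideal extensions $I \mapsto I^{1/p^{ke}}$. Everything else — including the existence of a suitable $c$ — is handed to us by Lemma \ref{lem.TestElementsExist}.
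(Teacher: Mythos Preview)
Your proof is correct and follows essentially the same approach as the paper's: identify $J$ as the smallest $\phi$-compatible ideal containing $c$, then use Lemma~\ref{lem.TestElementsExist} to show every nonzero $\phi$-compatible ideal contains $c$. The paper compresses your three steps into two terse sentences, leaving the compatibility computation and the iterated-compatibility induction implicit, but the logical structure is identical.
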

\begin{proof}
Certainly the sum $\sum_{n \geq 0} \phi^n \left( (cR)^{1/p^{ne}} \right)$ is the smallest ideal $J \subseteq R$ both containing $c$ and such that $\phi(J^{1/p^e}) \subseteq J$.
On the other hand, if $I \subseteq R$ is any nonzero ideal such that $\phi(I^{1/p^e}) \subseteq I$, then Lemma \ref{lem.TestElementsExist} implies that $c \in I$.
This completes the proof.
\end{proof}

\begin{starexercise} \cite[Proposition 4.8]{SchwedeFAdjunction}
\label{ex.TestIdealOfPhiM}
Prove that $\tau(R, \phi) = \tau(R, \phi^m)$ for any integer $m > 0$.  \\
{\emph Hint: } The containment $\subseteq$ should is easy.  For the other containment, use a clever choice of an element from Lemma \ref{lem.TestElementsExist}.
\end{starexercise}

\begin{exercise}
\label{ex.LocalizationForMapPairs}
Suppose that $W$ is a
multiplicative system in $R$.  Let $\phi \in \Hom_R(R^{1/p^e}, R)$ and
consider the induced map $(W^{-1}\phi) \in \Hom_{W^{-1} R}(
(W^{-1}R)^{1/p^e}, W^{-1}R)$.  Then show that
$W^{-1} \tau( R, \phi) = \tau(W^{-1}R, W^{-1}\phi)$.
\\
\emph{Hint: } Suppose that $c \in R$ comes from Lemma \ref{lem.TestElementsExist}.  Prove that $c/1 \in W^{-1} R$ also satisfies the condition of Lemma \ref{lem.TestElementsExist} for $W^{-1} \phi$.
\end{exercise}

We conclude with an algorithm for computing the test ideal of a pair $(R, \phi)$.

\begin{exercise} \cite{KatzmanParameterTestIdealOfCMRings}
 Choose $c$ satisfying Lemma \ref{lem.TestElementsExist} for a
 non-zero $\phi \in \Hom_R(R^{1/p^e}, R)$ (finding such a $c$ can be quite easy, as explained in the proof of Lemma \ref{lem.TestElementsExist}).  Consider the following chain of ideals.  $J_0 = cR$, $J_1 = J_0 + \phi(J_0^{1/p^e})$, and in general $J_n = J_{n-1} + \phi(J_{n-1}^{1/p^e})$.  Show that $J_n = \tau(R, \phi)$ for $n \gg 0$.
\end{exercise}

\subsection{Test ideals of rings}

As noted above, $\tau(R, \phi)$ is depends heavily  on the choice of
$\phi$.  To remove this dependence, one simply considers all possible $\phi$ simultaneously.

\begin{definition}
We define the \emph{test
  ideal\footnote{Strictly speaking, if we follow the literature,
    $\tau(R)$ is traditionally called the \emph{big test ideal} or the
    \emph{non-finitistic test ideal} and often denoted by $\tau_b(R)$ or $\tld \tau(R)$.} $\tau(R)$} to be the unique smallest non-zero ideal $J \subseteq R$ such that $\phi(J^{1/p^e}) \subseteq J$ for all $e > 0$ and all $\phi \in \Hom_R(R^{1/p^e}, R)$.
\end{definition}

It follows from the definition that $\tau(R, \phi) \subseteq \tau(R)$ for any choice of $\phi \in \Hom_R(R^{1/p^e}, R)$.

\begin{exercise}  \cite[Theorem 4.4]{HochsterHunekeTC1}
\label{ex.TestIdealOfPolynomialRing}
Suppose that $S = k[x_1, \dots, x_n]$.  Prove that $\tau(S) = S$.\\
\emph{Hint:}  Use the fact that $S^{1/p^e}$ is a free $S$-module to
show the following: for any $d \in S$, there exists an integer $e > 0$ and $\phi \in \Hom_S(S^{1/p^e}, S)$ such that $\phi(d^{1/p^{e}}) = 1$.
\end{exercise}

Again, it is not clear that $\tau(R)$ exists.

\begin{theorem} \cite[Lemma 2.1]{HaraTakagiOnAGeneralizationOfTestIdeals}
\label{thm.TestIdealExist}
Fix any nonzero $\phi \in \Hom_R(R^{1/p}, R)$ and any $c \in R$ satisfying the condition of Lemma \ref{lem.TestElementsExist} for $\phi$.  Then
\[
\tau(R) = \sum_{e \geq 0} \sum_{\psi} \psi \left( (cR)^{1/p^{e}} \right).
\]
where the inner sum runs over all $\psi \in \Hom_R(R^{1/p^e}, R)$.
\end{theorem}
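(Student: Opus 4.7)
Let $J$ denote the right-hand side of the claimed equality. The plan is to establish existence of $\tau(R)$ and the asserted formula simultaneously, by verifying that $J$ is itself the unique smallest nonzero ideal closed under every map in every $\Hom_R(R^{1/p^e}, R)$. There are three things to check: that $J$ is a nonzero ideal, that $J$ is stable under all such $\psi$, and that $J$ sits inside every nonzero ideal with the corresponding stability property.

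\textbf{Nonzero ideal.} Each summand $\psi((cR)^{1/p^e})$ is the $R$-linear image of an $R$-submodule of $R^{1/p^e}$, so it is an $R$-submodule of $R$, that is, an ideal. Taking $e=0$ and $\psi = \id_R$ shows $c \in J$, and $c \neq 0$ by construction, so $J \neq 0$.

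\textbf{Stability.} Fix $e_0 \geq 0$ and $\psi_0 \in \Hom_R(R^{1/p^{e_0}}, R)$. I would observe that for each $\psi \in \Hom_R(R^{1/p^e}, R)$, taking $p^{e_0}$-th roots yields a map $\psi^{1/p^{e_0}} \in \Hom_{R^{1/p^{e_0}}}(R^{1/p^{e+e_0}}, R^{1/p^{e_0}})$, and the composition $\psi_0 \circ \psi^{1/p^{e_0}}$ lies in $\Hom_R(R^{1/p^{e+e_0}}, R)$. Consequently
\[
\psi_0\bigl(\psi((cR)^{1/p^e})^{1/p^{e_0}}\bigr) = (\psi_0 \circ \psi^{1/p^{e_0}})\bigl((cR)^{1/p^{e+e_0}}\bigr),
\]
which is a summand in the definition of $J$. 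Summing over all $e$ and $\psi$ shows $\psi_0(J^{1/p^{e_0}}) \subseteq J$.

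\textbf{Minimality.} Suppose $I \subseteq R$ is any nonzero ideal satisfying $\psi(I^{1/p^e}) \subseteq I$ for all $e \geq 0$ and all $\psi \in \Hom_R(R^{1/p^e}, R)$. Pick $0 \neq d \in I$. By Lemma~\ref{lem.TestElementsExist} applied to $\phi$ and $d$, there exists $n > 0$ with $c \in \phi^n((dR)^{1/p^n})$. Since $\phi^n \in \Hom_R(R^{1/p^n}, R)$ and $(dR)^{1/p^n} \subseteq I^{1/p^n}$, the assumed stability of $I$ forces $c \in I$. But then $(cR)^{1/p^e} \subseteq I^{1/p^e}$ for every $e$, and applying stability once more gives $\psi((cR)^{1/p^e}) \subseteq I$ for every $\psi$ and every $e$. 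Therefore $J \subseteq I$. Combining the three steps, $J$ is the unique smallest such ideal, proving both that $\tau(R)$ exists and that $\tau(R) = J$.

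\textbf{Main obstacle.} The substantive input is Lemma~\ref{lem.TestElementsExist}: the existence of a universal element $c$ which can be ``extracted'' from every nonzero ideal via iterates of $\phi$ is precisely what makes the minimality step work. The remaining arguments---verifying that $J$ is an ideal and is preserved by precomposition with $p^{e_0}$-th roots---are formal manipulations in the $R^{1/p^e}$-formalism. The only notational care required is in Step~2, to confirm that $\psi_0 \circ \psi^{1/p^{e_0}}$ is genuinely $R$-linear from $R^{1/p^{e+e_0}}$ to $R$, which follows immediately because $\psi^{1/p^{e_0}}$ is $R^{1/p^{e_0}}$-linear and a fortiori $R$-linear.
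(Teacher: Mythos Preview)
Your proof is correct and follows precisely the approach the paper intends: the paper simply says ``the proof is essentially the same as in Theorem~\ref{thm.TestIdealPairsExist} and so is left to the reader,'' and what you have written is exactly that argument spelled out in full---show the sum contains $c$, is stable under all $\psi$ via composition, and is minimal because Lemma~\ref{lem.TestElementsExist} forces $c$ into any nonzero compatible ideal.
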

\begin{proof}
The proof is essentially the same as in Theorem \ref{thm.TestIdealPairsExist} and so is left to the reader.
\end{proof}

\begin{exercise} \cf \cite[Theorem 7.1(7)]{LyubeznikSmithCommutationOfTestIdealWithLocalization}
 Prove that for any given multiplicative system $W$, $W^{-1} \tau(R) = \tau(W^{-1} R)$.\\
\emph{Hint: } Mimic the proof of Exercise \ref{ex.LocalizationForMapPairs}.
\end{exercise}

\begin{remark}
The result of the above exercise holds in much more general settings than we consider here.  See \cite{LyubeznikSmithCommutationOfTestIdealWithLocalization, AberbachEnescuTestIdealsAndBaseChange}.
\end{remark}

\begin{exercise} \cite{FedderWatanabe}, \cite{VassilevTestIdeals}, \cite{SchwedeCentersOfFPurity}
Suppose that $R$ is an $F$-pure ring. Prove that $\tau(R)$ is a
radical ideal and that $R/\tau(R)$ is also $F$-pure.
\end{exercise}

\begin{starexercise}  \cite[Exercise 1.2.E(4)]{BrionKumarFrobeniusSplitting}
\label{ex.TestIdealInCondcutor}
Suppose that $R$ is a reduced (possibly non-normal) ring and $R^{\textnormal{N}}$ is its normalization.  The conductor ideal $\bc \subseteq R$ is the largest ideal of $R^{\textnormal{N}}$ which is also simultaneously an ideal of $R$ (it can also be described as $\Ann_R(R^{\textnormal{N}}/ R)$).  Show that $\tau(R) \subseteq \bc$.\\
\emph{Hint: } Show that $\phi(\bc^{1/p^e}) \subseteq \bc$ for all $\phi \in \Hom_R(R^{1/p^e}, R)$ and all $e \geq 0$.
\end{starexercise}

We conclude this section with a theorem which characterizes when $\tau(R) = R$.

\begin{theorem}
\label{thm.StrongFRegEquiv}
 Suppose $R$ is a domain  essentially of finite type over a perfect
 field $k$.  Then we have $\tau(R) = R$ if and only if for every $0 \neq c \in R$, there exists an $e > 0$ and an $R$-linear map \mbox{$\phi \: R^{1/p^e} \to R$} which sends $c^{1/p^e}$ to $1$.
\end{theorem}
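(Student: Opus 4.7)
\emph{Plan.} For the backward direction, pick any nonzero $c \in \tau(R)$. By hypothesis there exist $e$ and $\phi \in \Hom_R(R^{1/p^e}, R)$ with $\phi(c^{1/p^e}) = 1$; since $c^{1/p^e} \in \tau(R)^{1/p^e}$ and $\tau(R)$ is $\phi$-compatible by definition, this forces $1 \in \tau(R)$ and hence $\tau(R) = R$.

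For the forward direction, fix a nonzero $c \in R$ and consider the ideal
\[
I_c := \sum_{e \geq 0} \sum_{\phi \in \Hom_R(R^{1/p^e}, R)} \phi\bigl(c^{1/p^e} \cdot R^{1/p^e}\bigr).
\]
Note $I_c$ is nonzero, since $c = \id(c) \in I_c$. The first task is to show that $I_c$ is $\psi$-compatible for every $\psi \in \Hom_R(R^{1/p^f}, R)$. The key identity is $\phi(c^{1/p^e})^{1/p^f} = \phi^{1/p^f}(c^{1/p^{e+f}})$ in $R^{1/p^f}$, where $\phi^{1/p^f}(y) := \phi(y^{p^f})^{1/p^f}$ defines the Frobenius twist $\phi^{1/p^f} \in \Hom_R(R^{1/p^{e+f}}, R^{1/p^f})$. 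Consequently, $\psi$ applied to $u \cdot \phi(c^{1/p^e})^{1/p^f}$ for $u \in R^{1/p^f}$ rewrites as $\bigl((\psi \cdot u) \circ \phi^{1/p^f}\bigr)(c^{1/p^{e+f}})$, which lies in $I_c$. By minimality of $\tau(R)$, we get $\tau(R) \subseteq I_c$, so the hypothesis $\tau(R) = R$ forces $I_c = R$. In particular $1 \in I_c$, producing a representation $1 = \sum_{i=1}^n \phi_i(c^{1/p^{e_i}})$ after absorbing scalars and $p^{e_i}$-th roots into each $\phi_i$.

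The main obstacle is the final step of merging this finite sum into a single evaluation $\phi(c^{1/p^e}) = 1$. The plan is to reduce to the local case using that $\tau$ commutes with localization (so $\tau(R_\bm) = R_\bm$ for every maximal $\bm$). In $R_\bm$ the identity $\sum_i \phi_i(c^{1/p^{e_i}}) = 1$ must contain at least one summand $u := \phi_{i_0}(c^{1/p^{e_{i_0}}})$ which is a unit (otherwise all summands lie in $\bm R_\bm$ and cannot sum to $1$), and rescaling $\phi_{i_0}$ by $u^{-1}$ produces a single local map sending $c^{1/p^{e_{i_0}}}$ to $1$. To globalize, I would appeal to Lemma \ref{lem.TestElementsExist}: fixing a nonzero $\phi_0 \in \Hom_R(R^{1/p^{e_0}}, R)$ with associated completely stable test element $c_0$, one has $c_0 = \phi_0^n((cr)^{1/p^{ne_0}})$ for some $n, r$, yielding $\lambda \in \Hom_R(R^{1/p^{ne_0}}, R)$ with $\lambda(c^{1/p^{ne_0}}) = c_0$. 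Composing any candidate single map for $c_0$ with $\lambda$ (via the Frobenius-twist identity above) converts it into a single map for $c$, reducing the problem to the universal test element $c_0$ where the explicit formula of Theorem \ref{thm.TestIdealExist} and a quasicompactness argument on $\Spec R$ should complete the merging. Combining these three ingredients --- the sum decomposition from $I_c = R$, the extraction of a unit summand locally, and the Frobenius-twist composition through Lemma \ref{lem.TestElementsExist} --- is the principal technical work.
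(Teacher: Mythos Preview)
Your backward direction is correct and identical to the paper's: pick any nonzero $c \in \tau(R)$, use the hypothesis to produce $\phi$ with $\phi(c^{1/p^e}) = 1$, and conclude $1 \in \tau(R)$ by compatibility. Your forward direction also begins exactly as the paper does: the ideal $I_c = \sum_{e,\phi} \phi\bigl((cR)^{1/p^e}\bigr)$ is compatible under every $\psi$, hence contains $\tau(R) = R$, so $I_c = R$. In the local case you correctly observe that one summand must already be a unit, giving a single map --- this is precisely the paper's argument (the paper simply declares ``reduce to the local case'' at the outset and leaves that reduction to the reader).

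The gap is in your globalization. Your detour through Lemma~\ref{lem.TestElementsExist} to reduce an arbitrary $c$ to a fixed test element $c_0$ is a valid composition trick, but it is circular: you still face the identical problem of producing a \emph{single} global map for $c_0$, and ``the explicit formula of Theorem~\ref{thm.TestIdealExist} plus quasicompactness'' does not resolve it. Quasicompactness of $\Spec R$ only tells you that finitely many of the ideals $J_e^{c_0} := \{\phi(c_0^{1/p^e}) : \phi\}$ jointly generate $R$, not that any one of them does. The missing ingredient is monotonicity: you need $J_e^c \subseteq J_{e+1}^c$, so that $I_c = \bigcup_e J_e^c$ is an ascending union and $1 \in I_c$ forces $1 \in J_e^c$ for a single $e$. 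This monotonicity follows once you know $R$ is $F$-pure: given $\phi \in \Hom_R(R^{1/p^e},R)$ and a splitting $s \colon R^{1/p} \to R$, the composite $s \circ \phi^{1/p} \in \Hom_R(R^{1/p^{e+1}},R)$ satisfies $(s \circ \phi^{1/p})(c^{1/p^{e+1}}) = s\bigl(\phi(c^{1/p^e})^{1/p}\bigr) = \phi(c^{1/p^e})$. And $F$-purity itself comes for free from your own argument applied to $c = 1$: if $R_\bm$ were not $F$-pure then $J_e^1 \subseteq \bm$ for every $e$, whence $I_1 \subseteq \bm$, contradicting $I_1 = R$. Inserting this two-line observation in place of your Lemma~\ref{lem.TestElementsExist} maneuver completes the proof cleanly and globally.
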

\begin{proof}
We leave it to the reader to reduce to the
case where $R$ is a local ring with maximal ideal $\bm$.
First suppose that $\tau(R) = R$.  Choose a non-zero $c \in R$ and consider the
ideal $\sum_{e \geq 0} \sum_{\psi} \psi \left( (cR)^{1/p^{e}} \right)$
where the inner sum runs over $\psi \in \Hom_R(R^{1/p^e}, R)$.  Since
$\tau(R) = R$, this sum must equal $R$ as the sum is clearly
compatible under all $\psi$.  Therefore, since $R$ is local, there
exists an $e$ with $\psi \left( (cR)^{1/p^{e}} \right) \nsubseteq \bm$
and so $1 \in \psi \left( (cR)^{1/p^{e}} \right)$.  Thus $1 = \psi(
(cd)^{1/p^e})$ for some $d \in R$ and so by setting $\phi(\blank) = \psi(d^{1/p^e} \cdot \blank)$ we have $1 = \phi(c^{1/p^e})$ as desired.

Conversely, suppose that the condition of the theorem is satisfied.
It quickly follows that every non-zero ideal $J$ which is $\phi$-compatible for
all $\phi : R^{1/p^e} \to R$ and all $e > 0$, satisfies $1 \in J$.  Thus
$\tau(R) = R$.
\end{proof}

\begin{definition}
\label{def.strongFReg}
A ring $R$ for which $\tau(R) = R$ is called \emph{strongly $F$-regular}.
\end{definition}

\begin{theorem}\cite{HochsterHunekeTightClosureAndStrongFRegularity}
A regular ring $R$ is strongly $F$-regular.
\end{theorem}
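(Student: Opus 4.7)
The plan is to invoke Theorem \ref{thm.StrongFRegEquiv}, which reduces the statement to a concrete linear-algebra problem: for every nonzero $c \in R$, I need to exhibit an integer $e > 0$ and an $R$-linear map $\phi \: R^{1/p^e} \to R$ satisfying $\phi(c^{1/p^e}) = 1$. Since the exercise following Theorem \ref{thm.TestIdealExist} establishes $W^{-1}\tau(R) = \tau(W^{-1}R)$, the equality $\tau(R) = R$ can be checked at each localization $R_{\bm}$ at a maximal ideal $\bm$. Thus I would first reduce to the case in which $R$ is a regular local ring with maximal ideal $\bm$, so that Theorem \ref{thm.KunzCharacterizationsOfRegular} guarantees $R^{1/p^e}$ is a \emph{free} $R$-module of finite rank for every $e > 0$.

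Next, given $0 \neq c \in R$, the heart of the argument is to find some $e$ with $c^{1/p^e} \notin \bm \cdot R^{1/p^e}$. Using the identity $(\bm^{[p^e]})^{1/p^e} = \bm R^{1/p^e}$ from the exercise on Frobenius powers, this is equivalent to requiring $c \notin \bm^{[p^e]}$. Since $\bm^{[p^e]} \subseteq \bm^{p^e}$, Krull's intersection theorem in the Noetherian local domain $R$ yields
\[
\bigcap_{e \geq 0} \bm^{[p^e]} \;\subseteq\; \bigcap_{e \geq 0} \bm^{p^e} \;=\; 0,
\]
so such an $e$ must exist for every nonzero $c$. This is the key step that turns the abstract ``strongly $F$-regular'' property into an accessible one, and it is where regularity enters in an essential way (via both Kunz and Krull).

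Once $c^{1/p^e} \notin \bm R^{1/p^e}$, I would conclude as follows. Choose any free $R$-basis $\{e_1, \dots, e_N\}$ of $R^{1/p^e}$ and write $c^{1/p^e} = \sum_i r_i e_i$ with $r_i \in R$. By construction, not all the $r_i$ lie in $\bm$, so some $r_{i_0}$ is a unit in the local ring $R$. Defining $\phi \in \Hom_R(R^{1/p^e}, R)$ by $\phi(e_{i_0}) = r_{i_0}^{-1}$ and $\phi(e_j) = 0$ for $j \neq i_0$, I obtain $\phi(c^{1/p^e}) = 1$, as required by Theorem \ref{thm.StrongFRegEquiv}.

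The potentially delicate step is the localization reduction, but this is immediate from the localization exercise; the substantive content is the combination of Kunz's freeness theorem with the inclusion $\bm^{[p^e]} \subseteq \bm^{p^e}$, which together force every nonzero element of $R$ to eventually escape the ``Frobenius powers'' of the maximal ideal. Everything beyond that is elementary linear algebra over a free module.
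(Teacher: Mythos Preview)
Your proposal is correct and follows precisely the approach the paper intends: the paper leaves this as an exercise with a pointer to Exercise~\ref{ex.TestIdealOfPolynomialRing}, whose hint is exactly to use freeness of $R^{1/p^e}$ (via Kunz) to produce, for any nonzero $c$, an $e$ and $\phi$ with $\phi(c^{1/p^e})=1$. You have simply filled in the details of that hint---the localization reduction, the Krull intersection argument to escape $\bm^{[p^e]}$, and the construction of $\phi$ from a free basis---all of which are the expected steps.
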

\begin{proof}Left as an exercise to the reader  (\cf Exercise \ref{ex.TestIdealOfPolynomialRing}).
\end{proof}

\begin{exercise} \cite{HochsterHunekeTC1, HochsterHunekeFRegularityTestElementsBaseChange}
Prove that a strongly $F$-regular ring is Cohen-Macaulay.\\
\emph{Hint: } Reduce to the case of a local ring $(R, \bm)$ and find a
non-zero element $c \in R$ which annihilates $H^i_{\bm}(R)$ for all $i
< \dim R$.  Now apply the functors $H^i_{\bm}(\cdot)$ to the
homomorphism $R \to R^{1/p^e}$ which sends $1 \to c^{1/p^{e}}$.  Finally, apply the same functors to a map $\phi \: R^{1/p^e} \to R$ which sends $c^{1/p^{e}}$ to $1$.  Finally, use the criterion for checking whether a ring Cohen-Macaulay found in Appendix \ref{sec.appendixDuality}, fact (iv).
\end{exercise}

\begin{exercise} \cite{HochsterHunekeFRegularityTestElementsBaseChange}
Suppose that $R \subseteq S$ is a split inclusion of normal domains
where $S$ is strongly $F$-regular (\textit{e.g.} if $S$ is regular).  Show that $R$ is also strongly $F$-regular and in particular Cohen-Macaulay.
\end{exercise}

\subsection{Test ideals in Gorenstein local rings}
\label{subsec.TestIdealsInGorRings}

Consider now that the ring $R$ has a canonical module $\omega_R$. Applying
the functor $\Hom_R(\blank, \omega_R)$ to the natural inclusion $R
\subseteq R^{1/p^{e}}$, yields a map
\[
\Hom_R(R^{1/p^e}, \omega_R) \to \omega_R \, \, .
\]
Now, by Theorem \ref{thm.TransformationRuleCanonicalModulesFiniteMaps}, we have $\Hom_R(R^{1/p^e}, \omega_R) \cong \omega_{R^{1/p^e}} = (\omega_R)^{1/p^e}$.  Thus the map above may be viewed as a homomorphism
\[
\Phi_R^e \: \omega_R^{1/p^e} \to \omega_R \,\, .
\]
In a Gorenstein local ring  $\omega_R \cong R$, and so we have a \emph{nearly} canonical map \mbox{$\Phi_R \: R^{1/p^e} \to R$}.

\begin{setting*}
Throughout the rest of this subsection, we will assume that $R$ is a
Gorenstein\footnote{Everything in this subsection can be immediately generalized to any ring satisfying $\omega_R \cong R$, a condition sometimes called quasi-Gorenstein, or 1-Gorenstein.  It is possible to generalize many of the results in this setting to the \mbox{$\bQ$-Gorenstein} setting as well.  See Appendix \ref{sec.appendixDuality} for additional definitions.} local domain essentially of finite type over a perfect field $k$,
and the map $\Phi_{R}^{e} \: R^{1/p^{e}} \to R$ is as described above.
\end{setting*}

\begin{lemma} \cite[Lemma 7.1]{SchwedeFAdjunction} \cf \cite[Example 3.6]{LyubeznikSmithCommutationOfTestIdealWithLocalization}
The $R$-linear map $\Phi_R^e \: R^{1/p^e} \to R$ generates $\Hom_R(R^{1/p^e}, R)$ as an $R^{1/p^e}$-module.
\end{lemma}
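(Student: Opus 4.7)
The plan is to combine the Gorenstein hypothesis with the transformation rule for canonical modules (Theorem \ref{thm.TransformationRuleCanonicalModulesFiniteMaps}) along the finite inclusion $R \subseteq R^{1/p^e}$ to see that $\Hom_R(R^{1/p^e}, R)$ is a free $R^{1/p^e}$-module of rank one, and then to trace the construction of $\Phi_R^e$ to confirm it sits inside as a free generator.

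First, I would fix an $R$-module isomorphism $\beta \colon \omega_R \xrightarrow{\sim} R$; this exists because $R$ is Gorenstein. Since $R^{1/p^e}$ is abstractly isomorphic to $R$, it too is a Gorenstein local ring, and $\omega_{R^{1/p^e}} = (\omega_R)^{1/p^e}$ is a free $R^{1/p^e}$-module of rank one, generated for instance by $\beta^{1/p^e}$. Postcomposition with $\beta$ gives an $R^{1/p^e}$-linear isomorphism $\Hom_R(R^{1/p^e}, \omega_R) \xrightarrow{\sim} \Hom_R(R^{1/p^e}, R)$, while Theorem \ref{thm.TransformationRuleCanonicalModulesFiniteMaps} applied to the finite map $R \subseteq R^{1/p^e}$ supplies an $R^{1/p^e}$-linear isomorphism $\omega_{R^{1/p^e}} \xrightarrow{\sim} \Hom_R(R^{1/p^e}, \omega_R)$. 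Chaining the three isomorphisms exhibits $\Hom_R(R^{1/p^e}, R)$ as a free $R^{1/p^e}$-module of rank one; in particular, since $R^{1/p^e}$ is local, a map is a generator iff it corresponds to a unit under this identification.

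Next, I would unpack the construction of $\Phi_R^e$ to see that it is the image of $\beta^{1/p^e}$ under this composite isomorphism. By construction, $\Phi_R^e \in \Hom_R(R^{1/p^e}, R)$ is obtained (after applying $\beta$ to the target and $\beta^{1/p^e}$ to the source) from the evaluation-at-$1$ map $\Hom_R(R^{1/p^e}, \omega_R) \to \omega_R$, $\phi \mapsto \phi(1)$, by pulling back along the transformation isomorphism $\omega_{R^{1/p^e}} \cong \Hom_R(R^{1/p^e}, \omega_R)$. The content of Theorem \ref{thm.TransformationRuleCanonicalModulesFiniteMaps} is precisely that this isomorphism is adjoint to the canonical trace $\mathrm{Tr} \colon \omega_{R^{1/p^e}} \to \omega_R$, sending $\omega \in \omega_{R^{1/p^e}}$ to the map $r \mapsto \mathrm{Tr}(r \cdot \omega)$; under this identification, evaluation-at-$1$ on the right-hand side corresponds to $\mathrm{Tr}$ itself, which is the image of the generator $\beta^{1/p^e}$. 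Translating back through $\beta$ then identifies $\Phi_R^e$ with the image of $\beta^{1/p^e}$ under the composite isomorphism, so $\Phi_R^e$ generates $\Hom_R(R^{1/p^e}, R)$ as an $R^{1/p^e}$-module.

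The main obstacle is the bookkeeping step of verifying, directly from the construction in Theorem \ref{thm.TransformationRuleCanonicalModulesFiniteMaps}, that the transformation isomorphism $\omega_{R^{1/p^e}} \cong \Hom_R(R^{1/p^e}, \omega_R)$ genuinely matches evaluation-at-$1$ with the trace map—so that $\Phi_R^e$ corresponds to the chosen free generator $\beta^{1/p^e}$ and not merely to some nonzero element of the rank-one free module. Essentially, this compatibility is built into the statement of the transformation rule, and once unpacked the conclusion of the lemma is immediate.
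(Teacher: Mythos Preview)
The paper's own proof reads ``Left to the reader,'' so there is no approach to compare against; your argument is correct and is exactly the natural way to complete this exercise. Your worry in the final paragraph is unnecessary: the compatibility is tautological from the construction of $\Phi_R^e$ in Subsection~\ref{subsec.TestIdealsInGorRings}. By definition $\Phi_R^e(w) = \gamma(w)(1)$ where $\gamma \colon \omega_{R^{1/p^e}} \to \Hom_R(R^{1/p^e},\omega_R)$ is the transformation isomorphism, so for any generator $g$ of $\omega_{R^{1/p^e}}$ the map $a \mapsto \Phi_R^e(ag) = \gamma(ag)(1) = (a\cdot\gamma(g))(1) = \gamma(g)(a)$ is precisely $\gamma(g)$, which generates $\Hom_R(R^{1/p^e},\omega_R)$ because $\gamma$ is an $R^{1/p^e}$-linear isomorphism; postcomposing with $\beta$ finishes it.
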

\begin{proof}
Left to the reader.
\end{proof}

\begin{remark}
\label{rem.UniqueUpToUnit}  When one identifies $R$ with $\omega_R$,
there is a choice to be made.  In particular,
using the above definition, $\Phi_R^e$ is \emph{not}
canonically determined in an absolute sense.    Rather, $\Phi_R^e \in \Hom_R(R^{1/p^e}, R)$
is uniquely determined up to multiplication by a unit in
$R^{1/p^e}$.  Using the Cartier operator, however, it is possible to
give a more canonical construction of $\Phi_R^{e}$. See, for example
\cite{BrionKumarFrobeniusSplitting}, where $\Phi_R^{e}$ is called the \emph{trace}.
\end{remark}

\begin{exercise}
Suppose that $S = k[x_1, \dots, x_n]$ where $k$ is a perfect field and
consider the $S$-linear map $\Psi \: S^{1/p^e} \to S$ which sends $(x_1
\dots x_n)^{(p^e - 1)/p^e}$ to $1$ and all other monomials of the free
basis $\{ x_1^{\lambda_1/p^e} \dots x_n^{\lambda_n/p^e} \}_{0 \leq \lambda_i \leq p^e - 1}$ to zero.  Show that $\Psi$ generates $\Hom_S(S^{1/p^e}, S)$ as an $S$-module, and thus that $\Psi$ may be identified with $\Phi_{S}^{e}$.
\end{exercise}


At first glance, writing
$\Phi_R^e$ might seem in conflict with the exponential notation
introduced in Lemma \ref{lem.TestElementsExist}.  However, it is not
difficult to verify that -- up to multiplication by a unit as in
Remark~\ref{rem.UniqueUpToUnit} -- one has $(\Phi_R^1)^e = \Phi_R^e$.  See \cite[Appendix F]{KunzKahlerDifferentials} or \cite[Lemma 3.9, Corollary 3.10]{SchwedeFAdjunction} for further details.

\begin{theorem}
\label{thm.TauForGorensteinRing}
 Suppose that $R$ is Gorenstein and local and that $\Phi_R^e$ is as
 above (for any $e > 0$).  Then $\tau(R) = \tau(R, \Phi_R^e)$.
\end{theorem}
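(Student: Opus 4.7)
The easy containment $\tau(R, \Phi_R^e) \subseteq \tau(R)$ is immediate: $\tau(R)$ is a nonzero ideal and is $\phi$-compatible for \emph{every} $\phi \in \Hom_R(R^{1/p^{e'}}, R)$, in particular for $\phi = \Phi_R^e$, so the minimality property defining $\tau(R, \Phi_R^e)$ forces the containment. For the reverse inclusion, set $T := \tau(R, \Phi_R^e)$; by the minimality of $\tau(R)$, it suffices to prove that $T$ is $\psi$-compatible for every $\psi \in \Hom_R(R^{1/p^f}, R)$ and every $f \geq 1$.

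The plan is to first reduce to checking $\Phi_R^f$-compatibility alone, and then to reduce this (for arbitrary $f$) to the given $\Phi_R^e$-compatibility. For the first reduction, invoke the preceding lemma: $\Phi_R^f$ generates $\Hom_R(R^{1/p^f}, R)$ as an $R^{1/p^f}$-module, so any $\psi$ can be written as $\psi(x) = \Phi_R^f(r^{1/p^f}x)$ for some $r \in R$. Then
\[
\psi(T^{1/p^f}) \;=\; \Phi_R^f\bigl((rT)^{1/p^f}\bigr) \;\subseteq\; \Phi_R^f(T^{1/p^f}),
\]
so it is enough to establish $\Phi_R^f(T^{1/p^f}) \subseteq T$ for all $f \geq 1$.

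For the second reduction, apply Exercise \ref{ex.TestIdealOfPhiM} twice. Taking $\phi = \Phi_R^e$ and $m = f$ gives $\tau(R, \Phi_R^e) = \tau(R, (\Phi_R^e)^f)$, while taking $\phi = \Phi_R^f$ and $m = e$ gives $\tau(R, \Phi_R^f) = \tau(R, (\Phi_R^f)^e)$. As noted in the discussion preceding the theorem, both $(\Phi_R^e)^f$ and $(\Phi_R^f)^e$ coincide with $\Phi_R^{ef}$ up to multiplication by a unit of $R^{1/p^{ef}}$, and such a unit rescaling preserves the test ideal of a pair (it preserves the image of any ideal under the map). One therefore concludes $\tau(R, \Phi_R^e) = \tau(R, \Phi_R^{ef}) = \tau(R, \Phi_R^f)$, so $T$ is $\Phi_R^f$-compatible, as required. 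The main conceptual point --- really the only genuine obstacle --- is noticing that Exercise \ref{ex.TestIdealOfPhiM} can be chained in both directions through the common refinement $\Phi_R^{ef}$, thereby transferring compatibility from $\Phi_R^e$ to any $\Phi_R^f$, even when $f$ is not a multiple of $e$; combined with the generation statement of the preceding lemma, this promotes $\Phi_R^e$-compatibility all the way up to $\psi$-compatibility for arbitrary $\psi$.
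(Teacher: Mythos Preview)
Your proof is correct and follows essentially the same approach as the paper's. The paper likewise establishes $\tau(R,\Phi_R^e)=\tau(R,\Phi_R^d)$ for all $d$ via Exercise~\ref{ex.TestIdealOfPhiM} and the relation $(\Phi_R^1)^e=\Phi_R^e$ (up to units), then uses that $\Phi_R^d$ generates $\Hom_R(R^{1/p^d},R)$ to deduce $\psi$-compatibility of $\tau(R,\Phi_R^e)$ for arbitrary $\psi$; your only variation is routing through $\Phi_R^{ef}$ rather than $\Phi_R^1$, which is immaterial.
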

\begin{proof}
Certainly $\tau(R, \Phi_R^e) \subseteq \tau(R)$ since $\tau(R)$ is certainly $\Phi_R^e$-compatible.  For the converse inclusion, first note that by Exercise \ref{ex.TestIdealOfPhiM}, $\tau(R, \Phi_R^e) = \tau(R, \Phi_R^d)$ for any integer $d > 0$.  So consider now some $\phi : F^d_* R \to R$.  We know we can write $\phi(\blank) = \Phi_R^d(c^{1/p^d} \cdot \blank)$ for some element $c \in R^{1/p^d}$.  Thus
\[
\phi( \tau(R, \Phi_R^e)^{1/p^d}) = \Phi_R^d(c^{1/p^d} \tau(R, \Phi_R^d)^{1/p^d}) \subseteq \Phi_R^d(\tau(R, \Phi_R^d)^{1/p^d}) \subseteq \tau(R, \Phi_R^d) = \tau(R, \Phi_R^e)
\]
and so $\tau(R) \subseteq \tau(R, \Phi_R^e)$ as desired.
\end{proof}

\begin{phil}
 The previous theorem motivates the study of test ideal pairs $\tau(R,
 \phi)$.  For example, consider the following situation.
Suppose that $R$ is a non-normal Gorenstein domain and that $R^{\textnormal{N}}$ is its normalization.  By applying  Exercise \ref{ex.TestIdealInCondcutor},
 it can be shown that every $R$-linear map $\phi \: R^{1/p^e} \to R$
 extends (uniquely) to a $R^{\textnormal{N}}$-linear map
 $\overline{\phi} \: (R^{\textnormal{N}})^{1/p^e} \to
 R^{\textnormal{N}}$ (for details, see \cite[Exercise 1.2.E(4)]{BrionKumarFrobeniusSplitting}).

 In particular, $\Phi_R^{e} : R^{1/p^{e}} \to R$ extends to a map
 $\overline{\Phi_R^{e}}$ on the normalization as asserted above.
 However, even in the case where $R^{\textnormal{N}}$ is Gorenstein, $\overline{\Phi_R^{e}}$ is almost certainly not equal to $\Phi_{R^N}^{e}$.  Nevertheless, it may still be advantageous to work on $R^{\textnormal{N}}$, and the following exercise shows that $\tau(R)$ can be computed on the normalization.

\end{phil}

\begin{starexercise}
 Suppose that $R$ is Gorenstein and $R^{\textnormal{N}}$ is its
 normalization (which is not assumed Gorenstein).  Fix $\overline{\Phi_R^{e}}$ as above,  show that
$\tau(R^{\textnormal{N}}, \overline{\Phi_R^{e}}) = \tau(R)$.
\end{starexercise}

\section{Connections with algebraic geometry}
\label{sec.ConnectionsWithAG}

In this section we explain the connection between the test ideal and
the multiplier ideal, a construction which first appeared in complex
analytic geometry.  We assume that the reader already has some
familiarity with constructions such as divisors on normal algebraic
varieties, a resolution of singularities, and the canonical divisor.
Note that we have provided a brief review of divisors in
Appendix~\ref{sec.divisors} aimed at those mainly familiar with
algebraic techniques.
Further references for this section include
\cite{KollarSingularitiesOfPairs}, \cite{LazarsfeldPositivity2} or
\cite{BlickleLazarsfeldMultiplier} (the latter giving a particularly
satisfying introduction  to multiplier ideals).  Again, we remind the
reader that the material in this section is \emph{not} required to
understand the sections that follow.  See also Section \ref{sec.TestIdealIdealPairs}.

\begin{setting*}
  Throughout this section, let $R_0$ be a normal domain of finite type
  over $\mathbb{C}$, and let $X_0 = \Spec R_0$ denote the corresponding affine
  algebraic variety.
\end{setting*}

\subsection{Characteristic 0 preliminaries}
\label{sec:char-0-prel}

Before defining the multiplier ideal, we say a brief word about the type of resolution of singularities we consider.
\begin{definition}
Suppose that $Y$ is a variety defined over $\bC$.
 A proper birational map $\pi : Y' \to Y$ is called a \emph{log
   resolution of singularities for $Y$} if $\pi$ is proper and birational and
 $Y'$ is smooth and $\exc(\pi)$, the exceptional set of $\pi$, is a
 divisor with simple normal crossings (see Definition \ref{def.PropertiesOfWeilDivisors} in the appendix).   Given a closed subscheme $Z
 \subseteq Y$, $\pi$ is called a
 \emph{log resolution of singularities for $Z \subseteq Y$} if $\pi$
 is a log resolution of singularities for $Y$ and if both $\pi^{-1}(Z)$ and
 $\pi^{-1}(Z) \cup \exc(\pi)$ are divisors with simple normal crossings.
\end{definition}

We first define the multiplier ideal of $R_0$ in the case that $R_0$
is Gorenstein.  Let $\pi : \tld X_0 \to X_0$ be a log resolution of
singularities, and choose a canonical divisor $K_{\tld X_0}$ on $\tld X_0$, in other words, we choose a divisor $K_{\tld X_0}$ such that $\O_{\tld X_0}(K_{\tld X_0}) = \wedge^{\dim \tld X_0} \Omega_{\tld X_0/\bC}$.  We then obtain a \emph{canonical divisor} on $X_0$ as follows.
 Set $K_{X_0}$ to be the (unique) divisor on $X_0$ which agrees with
 $K_{\tld X_0}$ wherever $\pi$ is an isomorphism.  We now can define the multiplier ideal of $X_0$.

\begin{definition}
Consider the module $\Gamma\left( \tld X_0, \O_{\tld X_0}(\lceil K_{\tld X_0} - \pi^* K_{X_0} \rceil) \right)$.  This module is called the \emph{multiplier ideal} and is denoted by $\mJ(X_0)$.  It is independent of the choice of resolution.
\end{definition}

Of course, it is natural to ask why this module is an ideal.  However, set $U = \tld X_0 \setminus \exc(\pi)$ which is an open subset of both $\tld X_0$ and $X_0$ (in fact, $X_0 \setminus U$ has codimension at least $2$).  We have the natural inclusion
\[
\Gamma\left( \tld X_0, \O_{\tld X_0}(K_{\tld X_0} - \pi^* K_{X_0}) \right) \subset \Gamma\left(U, \O_{\tld X_0}(K_{\tld X_0} - \pi^* K_{X_0}) \right)
\]
But $(K_{\tld X_0} - \pi^* K_{X_0})|_U$ is zero, so the right side is just $\Gamma\left(U, \O_{{X_0}} \right) = R_0$ because $R$ is S2; see \cite[Chapter III, the method of Exercise 3.5]{Hartshorne} and \cite[Proposition 1.11]{HartshonreGeneralizedDivisorsAndBiliaison}.

The multiplier ideal has been discovered and re-discovered in many
contexts.   At least as early as \cite{GRVanishing}, it was noted that
$\mJ({X_0})$ is independent of the choice of resolution and might be an
interesting object to study.  Variants of the multiplier ideals
described also appeared throughout the work of Joseph Lipman and
others in the 1970's, see for example
\cite{LipmanTwoDimensionalDesingularization}.  However, multiplier
ideals have been most useful in the context of pairs
(definitions will be provided below) and first appeared independently
in the works of Nadel \cite{NadelMultiplierIdealSheavesAndExistence},
from the analytic perspective, as well as Lipman \cite{LipmanAdjointsOfIdeals}, from the perspective of commutative algebra.  However, the fundamental algebro-geometric theory of multiplier ideals was worked out even earlier without the formalism of multiplier ideals by Esnault-Viehweg in relation to Kodaira-vanishing and its generalization, Kawamata-Viehweg vanishing; see \cite{KawamataVanishing}, \cite{ViehwegVanishingTheorems} and \cite{EsnaultViehwegLecturesOnVanishing}.

\begin{remark}
 Smooth varieties have multiplier ideal $\mJ({X_0}) = \O_{X_0}$.  The easiest way to see this is to simply take $\pi$ as the identity (in other words, take ${X_0}$ as its own resolution).  In general, the more severe the singularities of ${X_0}$, the smaller the ideal $\mJ({X_0})$ is.
\end{remark}

\begin{example}
\label{ex.MultiplierIdealOfEllipticCurve}
 Consider the following ${X_0} = \Spec \bC[x,y,z]/\langle x^3 + y^3 + z^3 \rangle = \Spec R_0$.  Because this is a cone over a smooth variety (an elliptic curve), it has a resolution $\tld X_0 \to X_0$ obtained by blowing up the origin.  We embed $X_0 \subseteq \bC^3$ in the obvious way and blow up the origin in $\bC^3$ to obtain a log resolution $\pi : Y \to \bC^3$ of $X_0$ inside $\bC^3$.
 \[
 \xymatrix{
 E \ar@{^{(}->}[r] \ar[d]  & \tld X_0 \ar@{^{(}->}[r] \ar[d] & Y \ar[d]^{\pi} \\
 \{ \text{pt} \} \ar@{^{(}->}[r] & X_0 \ar@{^{(}->}[r] & \bC^3.
 }
 \]
Here $E$ is the elliptic curve obtained by blowing up the cone point in $X_0$.  We know $K_Y = 2F$ where $F \cong \bP^2_{\bC}$ is the exceptional divisor of $\pi$ by \cite[Chapter 8.5(b)]{Hartshorne}, thus we set $K_{\bC^3} = 0$.  It follows that $K_Y + \tld{X}_0 = 2F + \tld X_0$.  On the other hand, we know $\pi^* X_0 = \tld X_0 + 3F$, where the $3$ comes from the fact that $x^3 + y^3 + z^3$ vanishes to order 3 at the origin (the point being blown-up).  Therefore,
 \[
 K_Y + \tld{X}_0 = 2F + \tld X_0 = \pi^*X_0 - F.
 \]
 Now, $X_0 \sim 0$ in $\Pic(\bC^3) = 0$, so $\pi^*X_0 \sim 0$ also.  Thus $K_Y + \tld{X}_0 \sim -F$ and so by the adjunction formula (in the form of \cite[Chapter II, Ex. 8.20]{Hartshorne}),
 \[
 K_{\tld X_0} \sim (K_Y + \tld{X}_0)|_{\tld{X}_0} \sim (-F)|_{\tld{X}_0} \sim -E.
 \]
 So we set $K_{\tld X_0} = -E$ and then see that the corresponding $K_{X_0} = 0$ (since that is the divisor that agrees with $\sim -E$ outside of exceptional locus).

 Therefore, $\O_{\tld X_0}(K_{\tld X_0} - \pi^* K_{X_0}) = \O_{\tld X_0}(-E)$.  This sheaf can be thought of as the sheaf of functions in the fraction field of $R_0$ which vanish to order $1$ along $E$ and have no poles.  It is then clear that $\Gamma \left(U, \O_{X_0} \right)$ is just the maximal ideal of the origin in $R_0$.
\end{example}

\begin{exercise}
Compute the multiplier ideal of $\Spec \bC[x,y,z]/\langle x^n + y^n + z^n\rangle$ for arbitrary $n > 1$.
\end{exercise}

\subsection{Reduction to characteristic $p > 0$ and multiplier ideals}
\label{subsec.ReductionToCharP}

We now relate the multiplier ideal and the test ideal.  We need to briefly describe \emph{reduction to characteristic $p$,} a method of translating varieties in characteristic zero to characteristic $p > 0$.  We make many simplifying assumptions and so we refer the reader to \cite{SmithTightClosure}, \cite{HochsterHunekeTightClosureInEqualCharactersticZero}, or \cite{HunekeTightClosureBook} for a more detailed description of the reduction to positive characteristic process in this context.

Suppose that ${X_0} = \Spec R_0 = \Spec \bC[x_1, \dots, x_n] / I \subseteq
\bC^n$.  We write $I = \langle f_1, \dots, f_m \rangle$ where the
$f_i$ are polynomials.  For simplicity, we assume that all of the
coefficients of the $f_i$ are integers.  We set $R_{\bZ}$ to be the
ring $\bZ[x_1, \dots, x_n] / \langle f_1, \dots, f_m \rangle$.  For
each prime integer $p$, consider the ring $R_p := R_{\bZ}/pR_{\bZ}
\cong (\bZ/p\bZ)[x_1, \dots, x_n]/\langle f_1 \mod p, \dots, f_m \mod p\rangle$ and the associated scheme $X_p = \Spec R_p$.  The scheme $X_p$ is called a characteristic $p > 0$ model for $X_0$, and for large $p \gg 0$, $X_p$ and ${X_0}$ share many properties.  For example, $R_p$ is regular for large $p \gg 0$ if and only if $R_0$ is regular, \cite{HochsterHunekeTightClosureInEqualCharactersticZero}.  Given an ideal $J_0 \subseteq R_0$, we may also reduce it to positive characteristic by viewing a set of defining equations modulo $p$, to obtain an ideal $J_p$ (of course, $J_p$ might depend on the particular generators of $J$ chosen in small characteristics).

\begin{remark}
If the $f_i$ are not defined over $\bZ$, instead of working with
$R_{\bZ}$, one should work with $R_{A} = A[x_1, \dots, x_n] / \langle f_1, \dots, f_m\rangle$ where $A$ is the $\bZ$-algebra generated by the coefficients of the $f_i$ (and the coefficients of any other ideals one wishes to reduce to characteristic $p > 0$).  Instead of working modulo prime integers, one should quotient out by maximal ideals of $A$.
\end{remark}

Containments and equality (or non-containments and non-equality) of ideals are preserved after reduction to characteristic $p \gg 0$.  By viewing finitely generated $R_0$-modules as quotients of $R_0^{\oplus n}$, one can likewise reduce finitely generated modules to positive characteristic.  Maps between such modules can then be represented as matrices, which themselves can be reduced to characteristic $p > 0$, and properties of those maps, such as injectivity, non-injectivity, surjectivity and non-surjectivity are also be preserved for $p \gg 0$.  In particular, if a map of modules is an isomorphism after reduction to characteristic $p \gg 0$, then it is an isomorphism in characteristic zero as well.

Now, if $\pi : \tld X_0 \to X_0$ is a resolution of the singularities of $X_0$ obtained by blowing up an ideal $J_0$, then we may reduce $J_0$ to $J_p$, and then blow that up to obtain $\pi_p : \tld X_p \to X_p$, which is also a resolution of singularities for all $p \gg 0$ (of course, the existence of resolutions of singularities for arbitrary varieties in characteristic $p > 0$ is still an open question \cite{AbhyankarResolutionOfSingularitiesOfEmbedded, CossartPiltantResolutionOfThreefolds1, CossartPiltantResolutionOfThreefolds2, CutkoskyResolutionOfSingularitiesFor3Folds}).  Finally, the multiplier ideal $\mJ(X)_p$ (the multiplier ideal reduced to characteristic $p$) coincides with the characteristic $p > 0$ multiplier ideal $\mJ(X_p) := \Gamma(X_p, \O_{\tld X_p}(K_{\tld X_p} - \pi_p^* K_{X_p}))$ for $p \gg 0$.  Again, we suggest the reader see \cite{SmithTightClosure}, \cite{HochsterHunekeTightClosureInEqualCharactersticZero}, or \cite{HunekeTightClosureBook} for a much more detailed description of the reduction to characteristic $p > 0$ process.

\begin{theorem}\cite{SmithMultiplierTestIdeals, HaraInterpretation}
Suppose that $R_0$ is a Gorenstein ring in characteristic $0$ with $X_0 = \Spec R_0$.  Then $\mJ(X_0)_p = \tau(X_p)$ for all $p \gg 0$.
\end{theorem}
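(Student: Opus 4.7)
The strategy is to identify both ideals as submodules of $\omega_{R_{p}} \cong R_{p}$, and then establish containment in each direction using the Frobenius trace map and its compatibility with proper birational pushforward. The key external inputs are Grauert--Riemenschneider vanishing in characteristic zero and the spreading-out technology of Subsection~\ref{subsec.ReductionToCharP}.

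Fix a log resolution $\pi_{0} \: \tld X_{0} \to X_{0}$ and spread it out to obtain, for all $p \gg 0$, a log resolution $\pi_{p} \: \tld X_{p} \to X_{p}$ with the same combinatorial structure. Since $R_{0}$ is Gorenstein, the divisor $D_{p} := K_{\tld X_{p}} - \pi_{p}^{*}K_{X_{p}}$ is integral and exceptional for $p \gg 0$. Applying the projection formula to $\O_{\tld X_{p}}(K_{\tld X_{p}}) \cong \pi_{p}^{*}\O_{X_{p}}(K_{X_{p}}) \otimes \O_{\tld X_{p}}(D_{p})$ together with $\omega_{R_{p}} \cong R_{p}$, one identifies
\[
\mJ(X_{0})_{p} \,=\, \mJ(X_{p}) \,=\, \pi_{p*}\O_{\tld X_{p}}(D_{p}) \;\cong\; \pi_{p*}\omega_{\tld X_{p}} \,\subseteq\, \omega_{R_{p}} \cong R_{p},
\]
the final inclusion coming from the trace of the proper birational morphism $\pi_{p}$.

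For the containment $\tau(R_{p}) \subseteq \mJ(X_{p})$, I would show that $\mJ(X_{p})$ is a nonzero $\Phi_{R_{p}}^{e}$-compatible ideal. On the smooth variety $\tld X_{p}$ the Cartier/trace map $\Phi_{\tld X_{p}}^{e} \: \omega_{\tld X_{p}}^{1/p^{e}} \to \omega_{\tld X_{p}}$ exists; pushing forward by $\pi_{p}$ (which commutes with $F^{e}_{*}$) produces a map that, under Grothendieck duality, is compatible with the map $\Phi_{R_{p}}^{e}$ of Subsection~\ref{subsec.TestIdealsInGorRings}. Thus $\pi_{p*}\omega_{\tld X_{p}}$ is $\Phi_{R_{p}}^{e}$-compatible, and Theorem~\ref{thm.TauForGorensteinRing} gives $\tau(R_{p}) = \tau(R_{p},\Phi_{R_{p}}^{e}) \subseteq \mJ(X_{p})$.

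For the reverse containment $\mJ(X_{p}) \subseteq \tau(R_{p})$, I would invoke Grauert--Riemenschneider vanishing: $R^{i}\pi_{0*}\omega_{\tld X_{0}} = 0$ for $i > 0$ in characteristic zero, which spreads out to $R^{i}\pi_{p*}\omega_{\tld X_{p}} = 0$ for $p \gg 0$. Choose $c \in R_{p}$ satisfying Lemma~\ref{lem.TestElementsExist} for $\Phi_{R_{p}}^{e}$ and an ample line bundle $L$ on $\tld X_{p}$. Using Serre vanishing on $\tld X_{p}$ (after twisting by powers of $L$) together with the surjectivity of iterated Cartier operators on smooth varieties, one shows that for sufficiently large $e$ the image of $(c \cdot \omega_{\tld X_{p}})^{1/p^{e}}$ under $\pi_{p*}\!\circ\!\Phi_{\tld X_{p}}^{e}$ equals all of $\pi_{p*}\omega_{\tld X_{p}}$. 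Together with Theorem~\ref{thm.TestIdealPairsExist}, this forces $\mJ(X_{p}) \subseteq \tau(R_{p},\Phi_{R_{p}}^{e}) = \tau(R_{p})$. The principal obstacle is precisely this last step: Grauert--Riemenschneider vanishing outright fails in positive characteristic, so the argument must import it from characteristic zero by spreading out, which is exactly why equality is asserted only for $p \gg 0$ rather than for every prime.
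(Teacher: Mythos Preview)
Your argument for the containment $\tau(R_p) \subseteq \mJ(X_p)$ is essentially identical to the paper's: both use the commutative square relating $\Phi_{\tld X_p}^e$ and $\Phi_{R_p}^e$ (obtained by dualizing the Frobenius/$\pi_p$ commutative square) to show that $\pi_{p*}\omega_{\tld X_p}$ is $\Phi_{R_p}^e$-compatible, and then invoke Theorem~\ref{thm.TauForGorensteinRing}. The paper in fact proves \emph{only} this direction, explicitly deferring the reverse inclusion to \cite{SmithMultiplierTestIdeals,HaraInterpretation}.

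For the harder containment $\mJ(X_p) \subseteq \tau(R_p)$ you go beyond what the paper does, and you have correctly identified the essential ingredients of Hara's argument: Grauert--Riemenschneider vanishing in characteristic zero spread out to $p\gg 0$, Serre vanishing on the resolution, and surjectivity of the Cartier operator on smooth varieties. Your concluding assertion, however, is too compressed to stand as written. The claim that ``the image of $(c\cdot\omega_{\tld X_p})^{1/p^e}$ under $\pi_{p*}\circ\Phi_{\tld X_p}^e$ equals all of $\pi_{p*}\omega_{\tld X_p}$'' does not by itself yield $\mJ(X_p)\subseteq\tau(R_p)$: you need to explain why this image lands inside $\Phi_{R_p}^e\bigl((cR_p)^{1/p^e}\bigr)$, which requires tracking through the projection formula and the compatibility diagram carefully. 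More substantively, the actual argument (in Hara's formulation) works on the local cohomology side, showing that the Frobenius action on $H^d_{\bm}(\pi_{p*}\omega_{\tld X_p})$ is injective for $p\gg 0$; the ample twist and the higher-direct-image vanishing enter in controlling the cokernel of $H^d_{\bm}(\pi_{p*}\omega_{\tld X_p}) \to H^d_{\bm}(\omega_{R_p})$ under iterated Frobenius. Your sketch gestures at the right circle of ideas but would need to be expanded significantly to become a proof. In particular, the role of the ample line bundle $L$ you introduce is left entirely implicit.
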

\begin{proof}
We will only prove the $\supseteq$ containment.  As above, we choose $\pi : \tld X_0 \to X_0$ to be a log resolution of singularities which we reduce to a positive characteristic resolution of singularities $\pi_p : \tld X_p \to X_p$.  We have the following commutative diagram of schemes in characteristic $p > 0$
\[
\xymatrix{
\tld X_p \ar[d]_{\pi_{p}} \ar[r]^{F} & \tld X_p \ar[d]^{\pi_p} \\
X_p \ar[r]_{F} & X_p
}
\]
where the maps labeled $F$ are the Frobenius maps.  By duality, see Corollary \ref{cor.FunctorialityOfOmega}, we have the following diagram of obtained from canonical modules.
\[
\xymatrix{
\Gamma(\tld X_p, \omega_{\tld X_p}) \ar[d] \ar[r]^{\Phi_{\tld X_p}} & \Gamma(\tld X_p, \pi_* \omega_{\tld X_p}) \ar[d] \\
\Gamma(X_p, \omega_{X_p}) \ar[r]_{\Phi_{X_p}} & \Gamma(X_p, \omega_{X_p}).
}
\]
By working on a sufficiently small affine chart, because $X_0$ and thus $X_p$ is Gorenstein, we may assume that $\Gamma(X_p, \omega_{X_p}) \cong R_p$ and thus assume that $\Phi_{X_p}$ is the map $\Phi_{R_p}$ discussed in Subsection~\ref{subsec.TestIdealsInGorRings}.

The image of the vertical maps is the multiplier ideal $\mJ(X_p)$ and so it follows from the diagram that the multiplier ideal is $\Phi_R$-compatible. Thus $\mJ(X_p) \supseteq \tau(R_p)$ as long as $\mJ(X_p) \neq 0$ by Theorem \ref{thm.TauForGorensteinRing}.  But $\mJ(X_p)$ is non-zero because $\pi$ is an isomorphism at the generic points of $X_p$ and $\tld X_p$.
\end{proof}

\begin{starexercise}
 While the result above holds for $p \gg 0$, it does not necessarily hold for small $p > 0$.  Consider the ring $R = \bF_2[x,y,z]/\langle z^2+xyz+xy^2 + x^2y \rangle$.  Verify the following:
\begin{itemize}
 \item[(1)]  $R$ is $F$-pure (use Fedder's criterion).
 \item[(2)]  $R$ is not strongly $F$-regular (show that $\tau(R) = \langle x,y,z \rangle$).
 \item[(3)]  The singularities of $R$ can be resolved in characteristic 2 and furthermore, $\mJ(R) = R$ (use the method of Example \ref{ex.MultiplierIdealOfEllipticCurve}).  This is more involved.
\end{itemize}
Also see \cite{ArtinWildlyRamifiedZ2Actions} and \cite[Example 7.12]{SchwedeTuckerTestIdealFinite}.
\end{starexercise}

\subsection{Multiplier ideals of pairs}

We have so far only defined the multiplier ideal for a Gorenstein ring. We now consider a more general setting.

\begin{definition}
 Suppose that $X$ is a normal variety of any characteristic.  Then a \emph{$\bQ$-divisor $\Delta$} is a formal sum of prime Weil divisors with rational coefficients (in other words, a $\bQ$-divisor is just a divisor where we allow rational coefficients).  A $\bQ$-divisor $\Delta$ is called \emph{effective} if all its coefficients are positive.  Two $\mathbb{Q}$-divisors $\Delta_1$ and $\Delta_2$ are said to be \emph{$\bQ$-linearly equivalent}, denoted $\Delta_1 \sim_{\bQ} \Delta_2$, if there exists an integer $n > 0$ such that $n \Delta_1$ and $n \Delta_2$ are linearly equivalent Weil divisors.  We say that a $\bQ$-divisor $\Gamma$ is \emph{${\bQ}$-Cartier} if there exists an integer $n$ such that $n \Gamma$ is an integral Cartier divisor.  In that case, the \emph{index of $\Gamma$} is the smallest positive integer $n$ such that $n \Gamma$ is an integral Cartier divisor.  See Appendix \ref{sec.divisors} for a more detailed discussion from an algebraic perspective.
\end{definition}

Instead of working with an arbitrary variety, one often works with a pair.

\begin{definition}
\label{def.logQGorenstein}
 A \emph{log $\bQ$-Gorenstein pair} (or simply a \emph{pair} if the context is understood) is the data of a normal variety $X$ of any characteristic and an effective $\bQ$-divisor $\Delta$ such that $K_X + \Delta$ is $\bQ$-Cartier.  A pair is denoted by $(X, \Delta)$.  The \emph{index of $(X, \Delta)$} is defined to be the index of $K_X + \Delta$.
\end{definition}

\begin{remark}
There are many reasons why one should consider pairs.  Of course, you might be interested in a divisor inside an ambient variety, and pairs are natural in that context.  Also, not all varieties are Gorenstein, and log $\bQ$-Gorenstein pairs have associated multiplier ideals (as we'll see shortly).  Another reason that pairs occur is if one changes the variety.  In particular, suppose that $Y \to X$ is a morphism of varieties; for example, a closed immersion, a blow-up, a finite map, or a fibration.  Then in many cases properties of $X$ (respectively $Y$) can be detected by studying an appropriate pair $(X, \Delta)$ (respectively $(Y, \Delta)$), see for example \cite{KawakitaInversion} or \cite{KawamataSubadjunctionOne}.
However, many of the deepest applications of multiplier ideals of pairs are revealed by observing the behavior of the multiplier ideal as the coefficients of $\Delta$ vary.  This is not a topic we will explore in this article.  We invite the reader to see \cite{LazarsfeldPositivity2, EinMultiplierIdealsVanishing, EinLazSmithVarJumpingCoeffs, SiuMultiplierIdealSheavesSurvey, SiuDynamicMultiplierSurvey} for more background.
\end{remark}

Before we define the multiplier ideal, we first we state how to pull-back $\bQ$-Cartier divisors.  Suppose that $\Gamma$ is a $\bQ$-Cartier divisor on $X$ and $\pi : Y \to X$ is a birational map from a normal variety $Y$.  Choose $n$ such that $n \Gamma$ is Cartier and define $\pi^* \Gamma$ to be ${1 \over n} \pi^* (n \Gamma)$.

\begin{definition}
Suppose that $(X_0, \Delta_0)$ is a log $\bQ$-Gorenstein pair in characteristic zero.  Set $\pi : \tld X_0 \to X_0$ to be a log resolution of singularities of a pair $(X_0, \Delta_0)$ (in other words, we also assume that $\Supp(\pi^{-1} \Delta_0) \cup \exc(\pi)$ is a simple normal crossings divisor).  Consider the module $\Gamma\left( \tld X_0, \O_{\tld X_0}(K_{\tld X_0} - \pi^* (K_{X_0} + \Delta_0) ) \right)$.  This module is called the \emph{multiplier ideal} and is denoted by $\mJ(X_0, \Delta_0)$.  It is independent of the choice of log resolution.

 We say that $(X_0, \Delta_0)$ has \emph{log terminal singularities} if $\mJ(X_0, \Delta_0) = \O_{X_0}$.  For more about log terminal singularities, see \cite{KollarSingularitiesOfPairs} and \cite{KollarMori}.
\end{definition}

\begin{exercise}
 Suppose that $(X_0, \Delta_0)$ is a pair where $X$ is smooth and $\Delta_0$ has simple normal crossings support.  Show that $\mJ(X_0, \Delta_0) = \O_{X_0}( - \lfloor \Delta_0 \rfloor)$.
\end{exercise}

\begin{exercise}
 Suppose that $X_0$ is smooth and that $D$ is an effective Cartier divisor on $X_0$.  Prove that $\mJ(X_0, D) = \O_{X_0}(-D)$.  \\
\emph{Hint:}  Use the projection formula, \cite[Chapter II, Exercise 5.1(d)]{Hartshorne}
\end{exercise}

\subsection{Multiplier ideals vs test ideals of divisor pairs}

Previously we considered test ideals of pairs $(R, \phi)$ where $R$ is a ring of characteristic $p$ and $\phi : R^{1/p^e} \to R$ is an $R$-linear map.  We will see that this pair is essentially the same data as a log $\bQ$-Gorenstein pair $(X, \Delta)$.

\begin{setting*}
Throughout this subsection, $R$ is used to denote a \emph{normal} ring essentially of finite type over a perfect field of characteristic $p > 0$.  Furthermore, $X = \Spec R$.
\end{setting*}

Suppose we are given a $\phi \in \Hom_R(R^{1/p^e}, R)$.  The module $\Hom_R(R^{1/p^e}, R)$ is S2 both as an $R$-module and as an $R^{1/p^e}$-module.  Therefore it is determined by its localizations outside a set $Z \subseteq X$ of codimension 2 (the singular locus), see \cite[Theorem 1.12]{HartshorneGeneralizedDivisorsOnGorensteinSchemes}.  We set $U$ to be the smooth locus of $X$ and consider the sheaf $\sHom_{\O_U}(\O_U^{1/p^e}, \O_U)$.  Tensoring with $\omega_U = \O_U(K_U)$ and using the projection formula, we see that this module is isomorphic to
\begin{align*}
& \sHom_{\O_U}\left( \O_U^{1/p^e} \tensor_{\O_U} \O_U(K_U), \O_U(K_U)\right) \\ \cong & \sHom_{\O_U}( (\O_U(p^e K_U))^{1/p^e}, \O_U(K_U) ) \\ \cong & (\sHom_{\O_U}( \O_U(p^e K_U), \O_U(K_U)))^{1/p^e} \\ \cong & (\O_U( (1 - p^e) K_U ) )^{1/p^e}.
\end{align*}
Here the first isomorphism is due to the projection formula and the fact that $(F^e)^* \sL = \sL^{p^e}$ for any line bundle $\sL$.  The second isomorphism is Theorem \ref{thm.TransformationRuleCanonicalModulesFiniteMaps} from the Appendix.  The last isomorphism is just \cite[Chapter II, Exercise 5.1(b)]{Hartshorne}.

Because $\Hom_R(R^{1/p^e}, R)$ is S2, it is determined on $U$, see \cite[Theorem 1.12]{HartshorneGeneralizedDivisorsOnGorensteinSchemes}.  Therefore,
\[
\begin{array}{rl}
& \Hom_R(R^{1/p^e}, R) \\
\cong & \Hom_{\O_U}(\O_U^{1/p^e}, \O_U) \\
\cong & \Gamma(U, (\O_U( (1-p^e)K_U))^{1/p^e}) \\
\cong & \Gamma(X, (\O_X( (1-p^e) K_X))^{1/p^e}).
\end{array}
\]
Of course, $(\O_X( (1-p^e) K_X))^{1/p^e}$ is abstractly isomorphic to $\O_X( (1-p^e) K_X)^{1/p^e}$.
Therefore, $\phi$ may be viewed as a global section of $\O_X( (1-p^e)K_X)$.  In particular, by \cite[Proposition 7.7]{Hartshorne} which also works for reflexive rank-1 sheaves on normal varieties, $\phi$ determines an effective divisor $D_{\phi}$ linearly equivalent to $(1 - p^e)K_X$.  Set
\[
 \Delta_{\phi} := {1 \over p^e - 1} D_{\phi}.
\]

\begin{exercise}
 If $R = \bF_p[x,y]$, find $\phi$ such that $\Delta_{\phi}$ is the sum of the two coordinate axes (in other words, that $\Delta_{\phi} = \Div(xy)$.  Find a $\phi$ such that $\Delta_\phi = 0$.
\end{exercise}

It is straightforward check that there is a bijection between the following two sets, see \cite[Theorems 3.11, 3.13]{SchwedeFAdjunction}.
\begin{equation}
\label{eq:divmaplbcorr}
\left\{ \parbox{2in}{
\begin{center}
Non-zero $R$-linear maps $\phi:
F^{e}_{*} R \to R$ up to  pre-multiplication
by units.
\end{center}
} \right\}
\quad \longleftrightarrow \quad
\left\{ \parbox{1.8in}{
\begin{center}
Effective $\Q$-divisors $\Delta$ on $X = \Spec R$ such that
$(1-p^{e})(K_{X}+\Delta) \sim 0$.
\end{center}
} \right\}.
\end{equation}

\begin{starexercise} \cite[Proof of Theorem 6.7]{SchwedeCentersOfFPurity},  \cf \cite[Proof of the main theorem]{HaraWatanabeFRegFPure},
\label{ex.MapExtensionAlongBlowup}
 Suppose that $\phi : R^{1/p^e} \to R$ is a non-zero $R$-linear map and that $\pi : \tld X \to X = \Spec R$ is a birational map where $\tld X$ is normal.  Prove that $\phi$ induces a map
\[
\left(\O_{\tld X}(\lceil K_{\tld X} - \pi^* (K_X + \Delta_{\phi}) \rceil)\right)^{1/p^e} \to  \O_{\tld X}(\lceil K_{\tld X} - \pi^* (K_X + \Delta_{\phi}) \rceil)
\]
which agrees with $\phi$ wherever $\pi$ is an isomorphism.
\end{starexercise}

\begin{theorem} \cite[Theorem 3.2]{TakagiInterpretationOfMultiplierIdeals}
\label{thm.MultiplierIdealBecomesTestIdeal}
 Suppose that $X_0 = \Spec R_0$ is a variety of finite type over $\bC$ and that $(X_0, \Delta_0)$ is a log $\bQ$-Gorenstein pair.  Then, after reduction to characteristic $p \gg 0$, $\left( \mJ(X_0, \Delta_0) \right)_p = \tau(X_p, \phi_{\Delta_p})$ where $\phi_{\Delta_p}$ is a map corresponding to $\Delta_p$ as in \eqref{eq:divmaplbcorr} above.
\end{theorem}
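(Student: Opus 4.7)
The plan is to reduce both the variety and a log resolution to characteristic $p \gg 0$ and then prove the two containments separately. Pick a log resolution $\pi_0 \: \tld X_0 \to X_0$ of the pair $(X_0, \Delta_0)$ and reduce everything modulo $p$ to obtain $\pi_p \: \tld X_p \to X_p$; for $p \gg 0$, $\tld X_p$ remains smooth, $\pi_p$ remains birational, and $\pi_p^{-1}(\Supp \Delta_p) \cup \exc(\pi_p)$ remains a simple normal crossings divisor.  Set $\sL = \O_{\tld X_p}(\lceil K_{\tld X_p} - \pi_p^{*}(K_{X_p} + \Delta_p) \rceil)$, so that by definition $\pi_{p*} \sL = \mJ(X_p, \Delta_p)$.

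For the easier containment $\tau(X_p, \phi_{\Delta_p}) \subseteq \mJ(X_p, \Delta_p)$, the key tool is Exercise \ref{ex.MapExtensionAlongBlowup}, which extends $\phi_{\Delta_p}$ to a map $\tld \phi \: \sL^{1/p^e} \to \sL$ agreeing with $\phi_{\Delta_p}$ wherever $\pi_p$ is an isomorphism.  Since $\pi_p$ commutes with Frobenius, pushing forward yields $\pi_{p*}\tld\phi \: \mJ(X_p, \Delta_p)^{1/p^e} \to \mJ(X_p, \Delta_p)$, showing that $\phi_{\Delta_p}(\mJ(X_p, \Delta_p)^{1/p^e}) \subseteq \mJ(X_p, \Delta_p)$.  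Hence $\mJ(X_p, \Delta_p)$ is a nonzero (as $\pi_p$ is birational) $\phi_{\Delta_p}$-compatible ideal, and minimality in Definition \ref{def.TestIdealDefn} yields the inclusion.

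The reverse containment $\mJ(X_p, \Delta_p) \subseteq \tau(X_p, \phi_{\Delta_p})$ is substantially harder, and this is where I expect the main obstacle.  The plan is to fix a test element $c$ via Lemma \ref{lem.TestElementsExist} so that, by Theorem \ref{thm.TestIdealPairsExist}, $\tau(X_p, \phi_{\Delta_p}) = \sum_{n \geq 0} \phi_{\Delta_p}^n((cR)^{1/p^{ne}})$, and then to show that for $n$ sufficiently large, this sum already contains $\mJ(X_p, \Delta_p)$.  Lifting to $\tld X_p$, the iterates $\tld \phi^n \: \sL^{1/p^{ne}} \to \sL$ can be interpreted via Grothendieck-Serre duality as trace maps, and the desired surjectivity (after multiplying by the lift of $c$) reduces to a combination of Grauert-Riemenschneider vanishing $R^i \pi_{p*} \omega_{\tld X_p} = 0$ for $i > 0$ together with Kawamata-Viehweg vanishing applied to twists of $\sL$ by sufficiently positive Frobenius pullbacks of ample divisors.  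These vanishing theorems hold in characteristic zero and transfer to $p \gg 0$ by standard reduction mod $p$; marshalling them into a precise surjectivity statement while carefully tracking the round-up operators $\lceil \cdot \rceil$ is the technical heart of the argument, and is precisely why the equality can break in small characteristics.
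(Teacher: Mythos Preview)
Your approach to the easy containment $\tau(X_p,\phi_{\Delta_p}) \subseteq \mJ(X_p,\Delta_p)$ is exactly what the paper does: reduce a log resolution mod $p$, invoke Exercise~\ref{ex.MapExtensionAlongBlowup} to show that the multiplier ideal is $\phi_{\Delta_p}$-compatible, and conclude by minimality.  The one preliminary step the paper makes explicit that you left implicit is to first pass to a small enough affine chart so that $n(K_{X_0}+\Delta_0)\sim 0$ for some $n$; this guarantees, after reduction mod $p\gg 0$ with $p\nmid n$, that there exists $e$ with $(1-p^e)(K_{X_p}+\Delta_p)\sim 0$, so that the correspondence \eqref{eq:divmaplbcorr} actually produces a $\phi_{\Delta_p}$.

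For the hard containment, the paper simply says ``the converse inclusion requires additional techniques that we won't cover here'' and stops.  Your sketch---iterating the extended map $\tld\phi$, interpreting it via duality as a trace, and invoking characteristic-zero vanishing (Kawamata--Viehweg, Grauert--Riemenschneider) reduced mod $p\gg 0$ to force the needed surjectivity---is indeed the shape of the argument in the cited literature (Hara, Takagi, Hara--Watanabe).  So you have gone further than the paper's own proof, and in the right direction; just be aware that making the round-up bookkeeping and the precise form of the vanishing input rigorous is genuinely delicate, which is presumably why the survey omits it.
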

\begin{proof}
 We only briefly sketch the proof.  By working on a smaller affine chart if necessary, we may assume that $n(K_{X_0} + \Delta_0) \sim 0$ for some integer $n$.  We reduce both $X_0$ and $\Delta_0$ (and a log resolution) to characteristic $p$ and notice that if $n(K_{X_0} + \Delta_0) \sim 0$, then that property is preserved after reduction to characteristic $p \gg 0$.  Thus we may always assume that there exists an $e > 0$ such that $(1 - p^e)(K_{X_p} + \Delta_p)$ is Cartier.  We set $\phi_{\Delta_p}$ to be a map corresponding to $\Delta_p$ via \eqref{eq:divmaplbcorr} above.

Now use the exercise \ref{ex.MapExtensionAlongBlowup} above to show that $\mJ(X_0, \Delta_0)_p = \mJ(X_p, \Delta_p)$ is $\phi$-compatible.  Thus the inclusion $\supseteq$ is rather straightforward.  The converse inclusion requires additional techniques that we won't cover here.
\end{proof}

\begin{remark}
While multiplier ideals are quite closely to test ideals, many basic properties which hold for multiplier ideals fail spectacularly for test ideals.  For example, it follows immediately from the definition that every multiplier ideal is integrally closed (we suggest the reader prove this as an exercise).  However, not every test ideal is integrally closed \cite{McDermottTestIdealsInDiagonalHypersurfaces} and furthermore, every ideal in a regular ring is the test ideal of an appropriate pair $\tau(R, \langle f \rangle^t)$, see \cite{MustataYoshidaTestIdealVsMultiplierIdeals} (here $(R, \langle f \rangle ^t)$ is a pair as discussed in Section \ref{sec.TestIdealIdealPairs} below).
\end{remark}

\section{Tight closure and applications of test ideals}
\label{sec.TightClosure}

In this section we survey the test ideal's historic connections with tight closure theory.  It is not necessary to read this section in order to understand later sections.  We should also mention that as a survey of tight closure, this section is completely inadequate.  Some important aspects of tight closure theory are completely missing (for example, phantom homology).  Again, we refer the reader to the book \cite{HunekeTightClosureBook} or \cite[Chapter 10]{BrunsHerzog} for a more complete account.

\begin{setting*}
In this section, all rings are assumed to be integral domains essentially of finite type over a perfect field of characteristic $p > 0$.
\end{setting*}

Suppose that $R \subseteq S$ is an extension of rings.  Consider an ideal $I \subseteq R$ and its extension $I S$.  We always have that $(I S) \cap R \supseteq I$, however:

\begin{lemma}
\label{lem.ExtendAndContractFromASplitting}
With $R \subseteq S$ as above and further suppose the extension splits as a map of $R$-modules.  Then
\[
(IS) \cap R = I.
\]
\end{lemma}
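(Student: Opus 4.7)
The inclusion $I \subseteq (IS) \cap R$ is automatic since $I \subseteq IS$ and $I \subseteq R$, so the content of the statement is the reverse inclusion. My plan is to exploit the splitting $s : S \to R$ (so that $s|_{R} = \id_{R}$) to push elements of $(IS) \cap R$ back into $I$.

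First, I would fix an arbitrary $x \in (IS) \cap R$ and write $x = \sum_{j} i_{j} s_{j}$ with $i_{j} \in I$ and $s_{j} \in S$. The key observation is that $s$ is not just additive but $R$-linear, so since each $i_{j}$ lies in $R$ we have $s(i_{j} s_{j}) = i_{j} \, s(s_{j})$. Applying $s$ termwise gives
\[
s(x) = \sum_{j} i_{j} \, s(s_{j}) \in I,
\]
because each $s(s_{j}) \in R$ and $I$ is an ideal of $R$.

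On the other hand, since $x \in R$ and $s|_{R} = \id_{R}$, we have $s(x) = x$. Combining the two yields $x = s(x) \in I$, which gives $(IS) \cap R \subseteq I$ and completes the proof.

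There is really no obstacle here — the only subtle point worth flagging is the $R$-linearity of $s$ (as opposed to mere additivity), which is exactly what allows the factors $i_{j} \in I \subseteq R$ to be pulled outside of $s$. This is the whole content of the argument, and the statement in fact holds more generally for any pure extension $R \subseteq S$ with essentially the same proof once one replaces the use of the splitting by the injectivity of $I \otimes_{R} R \to I \otimes_{R} S$.
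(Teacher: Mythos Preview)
Your proof is correct and follows essentially the same approach as the paper: fix a splitting, write an element of $(IS)\cap R$ as an $S$-linear combination of elements of $I$, apply the splitting using $R$-linearity to pull the $I$-coefficients out, and conclude using that the splitting restricts to the identity on $R$. The only cosmetic difference is that the paper writes the sum in terms of a finite set of generators of $I$ rather than arbitrary elements $i_j \in I$, which makes no difference.
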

\begin{proof}
Fix $\phi : S \to R$ to be the splitting given by hypothesis.  Suppose that $z \in (IS) \cap R$, in other words, $z \in IS$ and $z \in R$.  Write $I = (x_1, \dots, x_n)$, we know that there exists $s_i \in S$ such that $z = \sum s_i x_i$.  Now, $z = \phi(z) = \phi\left( \sum s_i x_i \right) = \sum x_i \phi(s_i) \in I$ as desired.
\end{proof}

A converse result holds too.

\begin{theorem}\cite{HochsterCyclicPurity}
\label{thmCyclicPureVsPure}
Suppose that $R \subseteq S$ is a finite extension of approximately Gorenstein\footnote{Nearly all rings in geometry satisfy this condition.  Explicitly, a local ring $(R, \bm)$ is called approximately Gorenstein if for every integer $N > 0$, there exists $I \subseteq \bm^N$ such that $R/I$ is Gorenstein.} rings, a condition which every ring in this section automatically satisfies.  If for every ideal $I \subseteq R$, we have $IS \cap R = S$, then $R \subseteq S$ splits as a map of $R$-modules.
\end{theorem}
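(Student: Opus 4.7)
The plan is to reduce to the complete local setting and then apply Matlis duality to convert the cyclic purity into surjectivity of the evaluation map. Splitting of $R \hookrightarrow S$ is equivalent to the evaluation map $\mathrm{ev}_1 \colon \Hom_R(S, R) \to R$, $\phi \mapsto \phi(1)$, being surjective. Since $S$ is finite over $R$, $\Hom_R(S, R)$ is finitely generated, so surjectivity of $\mathrm{ev}_1$ is a local property. Moreover, using the isomorphism $\Hom_R(S, R) \otimes_R \widehat{R} \cong \Hom_{\widehat{R}}(\widehat{S}, \widehat{R})$ valid for finitely presented $S$, surjectivity can be detected after completing, by faithful flatness of $\widehat{R}/R$. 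The cyclic purity $IS \cap R = I$ is preserved under both localization and completion, so I reduce to the case where $(R, \mathfrak{m}, k)$ is complete local.

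Next, I use the approximately Gorenstein hypothesis to describe the injective hull $E = E_R(k)$ as a filtered colimit. Choose a cofinal decreasing sequence of ideals $I_1 \supseteq I_2 \supseteq \cdots$ with $I_N \subseteq \mathfrak{m}^N$ and each $R/I_N$ Artinian Gorenstein (which can be arranged by replacing successively chosen Gorenstein quotients with ones inside the previous). Since zero-dimensional local Gorenstein rings are self-injective, $R/I_N$ identifies with the submodule $(0 :_E I_N) \subseteq E$. As $E = \bigcup_N (0 :_E \mathfrak{m}^N)$ and the $I_N$ are cofinal with the $\mathfrak{m}^N$, these submodules exhaust $E$, giving $E = \varinjlim_N R/I_N$.

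The cyclic purity hypothesis provides injections $R/I_N \hookrightarrow S/I_N S$ for each $N$. Tensoring the presentation of $E$ with $S$ and using exactness of filtered colimits, the canonical map $E \to E \otimes_R S \cong \varinjlim_N S/I_N S$ is itself a filtered colimit of injections, hence injective. Now apply the Matlis duality functor $(-)^\vee := \Hom_R(-, E)$, which is exact because $E$ is injective. The injection $E \hookrightarrow E \otimes_R S$ dualizes to a surjection
\[
\Hom_R(E \otimes_R S, E) \twoheadrightarrow \Hom_R(E, E).
\]
By Hom-tensor adjunction the source rewrites as $\Hom_R(S, \Hom_R(E, E))$, and since $R$ is complete local we have $\Hom_R(E, E) \cong R$. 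Tracing the identifications, this dualized map is precisely $\mathrm{ev}_1 \colon \Hom_R(S, R) \to R$, and a preimage of $1$ gives the desired splitting.

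I expect the main technical obstacle to be bookkeeping in the Matlis duality step, namely verifying that the Matlis dual of the canonical map $E \to E \otimes_R S$ really does coincide with $\mathrm{ev}_1$ under the adjunction. A secondary subtlety is arranging the ideals $I_N$ into a genuinely decreasing cofinal sequence, which requires a small iterative choice but is routine once the existence of individual Gorenstein quotients inside arbitrary $\mathfrak{m}$-primary ideals is granted.
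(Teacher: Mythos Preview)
Your argument is correct and is essentially the classical proof due to Hochster. Note that the paper does not actually give its own proof of this theorem---it simply cites \cite{HochsterCyclicPurity}---so there is no alternative approach to compare against; what you have written is precisely the argument one finds in that reference.

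One minor remark on the reduction step: your assertion that cyclic purity is preserved under completion is stronger than what you need and is not entirely obvious for arbitrary ideals of $\widehat{R}$. However, your proof only uses injectivity of $R/I_N \to S/I_NS$ for the $\mathfrak{m}$-primary ideals $I_N$, and since these quotients are Artinian they are already complete, so passing to $\widehat{R}$ changes nothing at that stage. Alternatively, you can avoid completing $R$ altogether: work over the local ring $(R,\mathfrak{m})$, use $\Hom_R(E,E) \cong \widehat{R}$, and descend surjectivity of $\Hom_{\widehat{R}}(\widehat{S},\widehat{R}) \to \widehat{R}$ to $\Hom_R(S,R) \to R$ by faithful flatness at the end.
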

\begin{proof} See, \cite{HochsterCyclicPurity} \end{proof}

Consider now what happens if the extension $R \subseteq S$ is the Frobenius map.  Recall from Definition \ref{def.FrobeniusPowerOfIdeal} that if $I = (x_1, \dots, x_m)$, then $I^{[p^e]} = (x_1^{p^e}, \dots, x_m^{p^e})$.  It is an easy exercise to verify that this is independent of the choice of generators $x_i$.

\begin{exercise}
 Notice that $R$ is abstractly isomorphic to $R^{1/p^e}$ as a ring.  Show that under this isomorphism, $I^{[p^e]}$ corresponds to the extended ideal $I (R^{1/p^e})$ coming from $R \subseteq R^{1/p^e}$.
\end{exercise}

\begin{definition}
Given an ideal $I \subseteq R$, the \emph{Frobenius closure of $I$} (denoted $I^F$) is the set of all elements $z \in R$ such that $z^{p^e} \in I^{[p^e]}$ for some $e > 0$.  Equivalently, it is equal to the set of all elements $z \in R$ such that $z \in (I R^{1/p^e})$ for some $e > 0$.
\end{definition}

\begin{remark}
The set $I^F$ is an ideal.  Explicitly, if $z_1, z_2 \in I^F$, then $z_1^{p^a} \in I^{[p^a]}$ and $z_2^{p^b} \in I^{[p^b]}$.  Notice that we may assume that $a = b$.  Thus $z_1 + z_2 \in I^F$.  On the other hand, clearly $h z_1 \in I^F$ for any $h \in R$.
\end{remark}

We point out a several basic facts about $I^F$ mostly for comparison with tight closure (defined below).

\begin{proposition}
Fix $R$ to be a domain and $(x_1, \dots, x_n) = I \subseteq R$ an ideal.
\begin{itemize}
\item[(i)]  $(I^F)^F = I^F$.
\item[(ii)]  For any multiplicative set $W$, $(W^{-1} I)^F = W^{-1} (I^F)$.
\item[(iii)]  $R$ is $F$-split if and only if $I = I^F$ for all ideals $I \subseteq R$.
\end{itemize}
\end{proposition}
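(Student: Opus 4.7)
For part (i), the inclusion $I^F \subseteq (I^F)^F$ is immediate. For the reverse, suppose $z \in (I^F)^F$, so $z^{p^e} \in (I^F)^{[p^e]}$ for some $e$. Write $z^{p^e} = \sum r_i w_i^{p^e}$ with $w_i \in I^F$, and choose $d$ large enough that $w_i^{p^d} \in I^{[p^d]}$ for every $i$ (possible since there are only finitely many $w_i$). Raising to the $p^d$-th power and using that $(-)^{p^d}$ distributes over sums and products in characteristic $p$, I would compute
\[
z^{p^{e+d}} \;=\; \sum r_i^{p^d}\, (w_i^{p^d})^{p^e} \;\in\; (I^{[p^d]})^{[p^e]} \;=\; I^{[p^{d+e}]},
\]
which places $z$ in $I^F$.

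For part (ii), I would verify both containments by clearing denominators. If $z/w \in (W^{-1}I)^F$, then $(z/w)^{p^e} \in W^{-1}(I^{[p^e]})$, so some $u \in W$ satisfies $u\,z^{p^e} \in I^{[p^e]}$; then $(uz)^{p^e} = u^{p^e-1}(u z^{p^e}) \in I^{[p^e]}$, so $uz \in I^F$ and hence $z/w \in W^{-1}(I^F)$. Conversely, if $z \in I^F$ with $z^{p^e} \in I^{[p^e]}$, then $(z/w)^{p^e} \in W^{-1}(I^{[p^e]}) = (W^{-1}I)^{[p^e]}$, so $z/w \in (W^{-1}I)^F$. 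Neither direction poses any difficulty.

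Part (iii) is the main content and is where I would use the two framing results from earlier in the section. First, observe that the definition of Frobenius closure rewrites as
\[
I^F \;=\; \bigcup_{e \geq 0}\bigl( I R^{1/p^e} \cap R\bigr),
\]
using the exercise that identifies $I^{[p^e]}$ with $I R^{1/p^e}$ under the isomorphism $R \cong R^{1/p^e}$. If $R$ is $F$-split, then each inclusion $R \subseteq R^{1/p^e}$ splits as $R$-modules (by the exercise showing $e=1$ suffices), and Lemma \ref{lem.ExtendAndContractFromASplitting} gives $IR^{1/p^e} \cap R = I$ for every $e$; taking the union yields $I^F = I$. Conversely, if $I^F = I$ for every ideal, then in particular $IR^{1/p} \cap R \subseteq I^F = I$, so extension-contraction along $R \subseteq R^{1/p}$ is trivial on every ideal. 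Since $R \subseteq R^{1/p}$ is a finite extension and $R$ is approximately Gorenstein (essentially of finite type over a field), Theorem \ref{thmCyclicPureVsPure} applies and produces the desired splitting.

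The only genuinely nontrivial step is invoking Theorem \ref{thmCyclicPureVsPure} for the converse of (iii); the remaining work is bookkeeping with $p$-th powers and with the dictionary between $I^{[p^e]}$ and $IR^{1/p^e}$.
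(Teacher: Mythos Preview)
Your proposal is correct and follows essentially the same route as the paper. The paper leaves (i) to the reader, gives the same clearing-denominators argument for (ii), and for (iii) simply cites Theorem~\ref{thmCyclicPureVsPure}; you have supplied the details the paper omits, including the explicit use of Lemma~\ref{lem.ExtendAndContractFromASplitting} for the forward direction of (iii), which the paper's terse reference glosses over.
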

\begin{proof}
The proof of property (i) is left to the reader.
For (ii), we note that $(\supseteq)$ is obvious.  Conversely, suppose that $z \in (W^{-1} I)^F$, thus $z^{p^e} \in (W^{-1} I)^{[p^e]} = W^{-1}(I^{[p^e]})$.  Therefore, for some $w \in W$, $wz^{p^e} \in I^{[p^e]}$, which implies that $(wz)^{p^e} \in I^{[p^e]}$ and the converse inclusion holds.
Part (iii) is obvious by Theorem \ref{thmCyclicPureVsPure}.
\end{proof}

Now we define tight closure.

\begin{definition}\cite{HochsterHunekeTC1}
Suppose that $R$ is an $F$-finite domain and $I$ is an ideal of $R$, then the \emph{tight closure of $I$} (denoted $I^*$) is defined to be the set
\[
\{ \; z \in R \; | \; \exists \; 0 \neq c \in R\text{ such that } c
z^{p^e} \in I^{[p^e]} \text{ for all } e \geq 0 \;  \}.
\]
\end{definition}

It should be noted that tight closure is notoriously difficult to
compute.
For a survey on computations of tight closure (using highly geometric methods) we suggest reading \cite{BrennerHerzogVillamayorThreeLectures}.  Also see \cite{SinghComputationofTightClosureDiagonalHypersurfaces,KatzmanTheComplexityofFrobPowers,BrennerTightClosureOfProjectiveBundles,BrennerComputingTightClosureInDim2}.

\begin{proposition}
Suppose we have an ideal $(x_1, \dots, x_n) = I \subseteq R$ where $R$ is an $F$-finite domain.
\begin{itemize}
\item[(i)]  $I^*$ is an ideal containing $I$, \cite[Proposition 4.1(a)]{HochsterHunekeTC1}.
\item[(ii)]  $(I^*)^* = I^*$, \cite[Proposition 4.1(e)]{HochsterHunekeTC1}.
\item[(iii)]  It is known that the formation of $I^*$ does \emph{NOT} commute with localization, \cite{BrennerMonsky}.
\item[(iii')]  If $I$ is generated by a system of parameters, then the formation of $I^*$ does commute with localization, \cite{AberbachHochsterHunekeLocalofTCandModsofFinProjDim,SmithTightClosureParameter}.
\item[(iv)]  If $\tau(R) = R$, then $I^* = I$ for all ideals $I$, \cite[Theorem 3.1(d)]{HochsterHunekeTightClosureAndStrongFRegularity}.
\item[(v)]  We always have the containment $I^* \subseteq {\overline{I}}$ where $\overline{I}$ is the integral closure of $I$, \cite[Theorem 5.2]{HochsterHunekeTC1}.
\end{itemize}
\end{proposition}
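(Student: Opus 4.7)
The plan is to address the six items in order, observing that (i), (ii), (iv), (v) admit direct arguments from the definitions plus previously stated tools, while (iii) and (iii') are deep results that will be quoted.

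For (i), the containment $I \subseteq I^*$ is immediate with witness $c=1$. To see $I^*$ is an ideal, I would take $z_{1},z_{2}\in I^{*}$ with witnesses $c_{1},c_{2}$ and observe that $c_{1}c_{2}\neq 0$ (since $R$ is a domain) is a simultaneous witness for both $z_{1}+z_{2}$ (using the Frobenius-linearity $(z_1+z_2)^{p^e}=z_1^{p^e}+z_2^{p^e}$) and for $rz_{1}$ for any $r\in R$ (using $(rz_1)^{p^e}=r^{p^e}z_1^{p^e}$). For (ii), the inclusion $I^{*}\subseteq (I^{*})^{*}$ is trivial from (i); for the reverse, I would use that $R$ is Noetherian so $I^{*}=(y_{1},\dots,y_{m})$, pick witnesses $c_{i}$ for each $y_{i}\in I^{*}$, and show $c':=c_{1}\cdots c_{m}$ witnesses $(I^{*})^{[p^{e}]}\subseteq\{w : c'w\in I^{[p^{e}]}\}$ by writing arbitrary elements of $(I^{*})^{[p^{e}]}$ as $R$-linear combinations of the $y_{i}^{p^{e}}$. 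Then a $z\in (I^{*})^{*}$ with witness $c$ is seen to have $cc'\neq 0$ as a witness placing it in $I^{*}$.

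For (iv), this is where the characterization of strong $F$-regularity from Theorem~\ref{thm.StrongFRegEquiv} does the work. Given $z\in I^{*}$ with witness $c$, the relation $cz^{p^{e}}\in I^{[p^{e}]}$ rewrites in $R^{1/p^{e}}$ (using $(I^{[p^{e}]})^{1/p^{e}}=IR^{1/p^{e}}$) as $c^{1/p^{e}}z\in IR^{1/p^{e}}$. Because $\tau(R)=R$, Theorem~\ref{thm.StrongFRegEquiv} supplies an $e>0$ and a map $\phi\colon R^{1/p^{e}}\to R$ with $\phi(c^{1/p^{e}})=1$. Applying this $R$-linear $\phi$ to the relation above, using $R$-linearity to pull the factor $z\in R$ out, yields $z=\phi(c^{1/p^{e}}z)\in I$. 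For (v), I would invoke the valuative criterion for integral closure: $w\in\overline{I}$ iff $v(w)\geq v(I)$ for every valuation $v$ on $\mathrm{Frac}(R)$ nonnegative on $R$. If $z\in I^{*}$ with witness $c$, then from $cz^{p^{e}}\in I^{[p^{e}]}$ we get $v(c)+p^{e}v(z)\geq p^{e}v(I)$, so $v(z)\geq v(I)-v(c)/p^{e}$; letting $e\to\infty$ gives $v(z)\geq v(I)$ as desired.

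For (iii) and (iii'), I would not attempt proofs: (iii) is the Brenner--Monsky counterexample, which requires substantial machinery on Hilbert--Kunz theory and vector bundles on projective curves, and is well beyond the scope of a survey paragraph; (iii') is the parameter test ideal/colon-capturing theorem, also substantial. I would quote the referenced papers and move on.

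The main obstacle is purely expository: (i), (ii), (iv), (v) are short once one has the right witness-manipulation instincts, but (iv) is the one that genuinely uses the technology developed in this paper (Theorem~\ref{thm.StrongFRegEquiv}), and writing it cleanly requires being careful about the distinction between $I^{[p^{e}]}\subseteq R$ and its avatar $IR^{1/p^{e}}\subseteq R^{1/p^{e}}$ so that the splitting $\phi$ can be applied. The rest are either bookkeeping or black-boxed citations.
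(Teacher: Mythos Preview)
Your proposal is correct and matches the paper's approach almost exactly: (i) and (ii) use the same witness-multiplication trick (the paper leaves (ii) to the reader), (iii) and (iii') are cited without proof in both, and your argument for (iv) is the same as the paper's use of Theorem~\ref{thm.StrongFRegEquiv}, only written more carefully with the passage to $R^{1/p^e}$ made explicit. The one minor divergence is in (v): you invoke the valuative criterion for integral closure, whereas the paper instead hints at the characterization that $x\in\overline{I}$ iff there exists $0\neq c\in R$ with $cx^n\in I^n$ for all $n>0$; since $I^{[p^e]}\subseteq I^{p^e}$, the tight-closure witness gives this along the sequence $n=p^e$, which suffices. Both routes are short and standard.
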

\begin{proof}
For (i), suppose that $c z^{p^e} \in I^{[p^e]}$ and $d y^{p^e} \in I^{[p^e]}$ for all $e \geq 0$ for certain $c, d \in R \setminus \{ 0\}$.  Then $cd (z+y)^{p^e} \in I^{[p^e]}$ for all $e \geq 0$.  Of course, clearly $I^*$ contains $I$ (choose $c = 1$).  Property (ii) is left to the reader.  The proof of (iii) is beyond the scope of this survey, see \cite{BrennerMonsky}.  The proof of (iii') can be found in \cite{SmithTightClosureParameter} where it is actually shown that tight closure coincides with plus-closure.

For (iv), suppose that $z \in I^*$ and $\tau(R) = R$.  Choose $c \neq 0$ such that $c z^{p^e} \in I^{[p^e]}$ for all $e \geq 0$.  We know that there exists an $e > 0$ and $\phi : R^{1/p^e} \to R$ which sends $c$ to $1$.  Write $cz^{p^e} = \sum a_i x_i^{p^e}$.  Then $z = \phi(cz^{p^e}) = \sum x_i \phi(a_i) \in I$.  For (v) we give a hint in the form of a characterization of $\overline{I}$.  One has $x \in \overline{I}$ if and only if there exists $0 \neq c \in R$ such that $cx^n \in I^n$ for all $n > 0$.
\end{proof}

We now state some important additional more subtle properties of tight closure.

\begin{theorem} (\cf \cite{HunekeTightClosureBook}, \cite[Chapter 10]{BrunsHerzog}, \cite[Section 1.3]{SmithTightClosure})
\begin{description}
\item[Persistence]  Given any map of rings $R \to S$, $I^*S \subseteq (IS)^*$.
\item[Finite extensions]  If $R \subseteq S$ is a finite extension of rings, then $(IS) \cap R \subseteq I^*$ for all ideals $I \subseteq R$.  Also see \cite{SmithTightClosureParameter}.
\item[Colon Capturing]  If $R$ is local and $x_1, \dots, x_d$ is a system of parameters for $R$, then we have $(x_1, \dots, x_{i}) :_R x_{i+1} \subseteq (x_1, \dots, x_i)^*$.
\end{description}
\end{theorem}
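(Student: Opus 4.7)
My plan is to handle each of the three items separately, but the unifying theme is to produce, for each asserted element $z$ of the relevant tight closure, a single nonzero $c \in R$ that controls all Frobenius powers $z^{p^e}$ simultaneously. For the finite extensions and colon capturing items this $c$ will come from generic freeness or the open Cohen-Macaulay locus of $R$; for persistence it must additionally survive the map $R \to S$, which is the delicate point.

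\textbf{Finite extensions.} Given $z \in IS \cap R$, write $z = \sum \alpha_j x_j$ with $\alpha_j \in S$ and $x_j \in I$. Additivity of Frobenius in characteristic $p$ gives $z^{p^e} = \sum \alpha_j^{p^e} x_j^{p^e} \in I^{[p^e]} S$ for every $e \geq 0$. To descend from $S$ to $R$, exploit that $S$ is a finite torsion-free $R$-module: passing to fraction fields $K \subseteq L$, pick a $K$-basis of $L$ containing $1$ and project onto the $1$-coordinate to obtain a $K$-linear retraction $\pi \colon L \to K$ of $K \hookrightarrow L$. Its restriction to $S$ is an $R$-linear map $\pi \colon S \to K$ with $\pi|_R = \id$, and since $\pi(S)$ is a finitely generated $R$-submodule of $K$ there is some $c \neq 0$ with $c\pi(S) \subseteq R$. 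Applying $c\pi$ to the relation for $z^{p^e}$ and using $\pi(z^{p^e}) = z^{p^e}$ yields $cz^{p^e} = \sum c\pi(\alpha_j^{p^e}) x_j^{p^e} \in I^{[p^e]}$ uniformly in $e$, so $z \in I^*$.

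\textbf{Persistence.} Starting from $z \in I^*$ witnessed by some nonzero $c \in R$, applying $\phi \colon R \to S$ gives $\phi(c) \phi(z)^{p^e} \in (IS)^{[p^e]}$ for all $e$, and we finish immediately if $\phi(c) \neq 0$. The obstruction is when $\phi(c) = 0$, and the resolution is to replace $c$ by a completely stable test element $d$ as in Remark~\ref{rem.CompletelyStableTestElement}. Such $d$ serves as a universal witness for every $z \in I^*$: the defining property $z^{p^f} \in (I^{[p^f]})^{*}$ (using the original $c$) combined with $d \cdot J^* \subseteq J$ for all ideals $J$ gives $d z^{p^f} \in I^{[p^f]}$ for every $f$. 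One then factors $R \to S$ as $R \twoheadrightarrow R/\ker\phi \hookrightarrow S$; the quotient is again essentially of finite type over the perfect field, and the technical heart is to show that some completely stable test element of $R$ can be chosen whose image in $R/\ker\phi$ remains a test element, or that a replacement one can be found downstream. This preservation of test elements, treated in \cite{HochsterHunekeFRegularityTestElementsBaseChange}, is the main obstacle for this item.

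\textbf{Colon capturing.} Suppose $z \cdot x_{i+1} \in (x_1,\dots,x_i)$ and write $z x_{i+1} = \sum_{j \leq i} a_j x_j$. Raising to $p^e$-th powers by additivity of Frobenius yields $z^{p^e} x_{i+1}^{p^e} = \sum a_j^{p^e} x_j^{p^e}$, so $z^{p^e} \in (x_1^{p^e},\dots,x_i^{p^e}) :_R x_{i+1}^{p^e}$ for every $e$. The essential input is that $R$ is excellent and hence has an open Cohen-Macaulay locus, giving $c \neq 0$ with $R[c^{-1}]$ Cohen-Macaulay; equivalently, $c$ annihilates the local cohomology $H^j_\mathfrak{m}(R)$ for each $j < d$. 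Via the Koszul-complex presentation of local cohomology and the compatibility of Frobenius with these presentations uniformly across $e$, the class of $z^{p^e}$ in the relevant colon-mod-ideal module is killed by $c$ for every $e$, giving $c z^{p^e} \in (x_1^{p^e},\dots,x_i^{p^e})$ for all $e$ and hence $z \in (x_1,\dots,x_i)^*$. The main bookkeeping obstacle here is the uniform Frobenius-compatibility of the Koszul and local-cohomology presentations, which is by now standard in the tight closure literature.
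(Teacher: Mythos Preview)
The paper does not prove this theorem; it is stated with references to \cite{HunekeTightClosureBook}, \cite{BrunsHerzog}, and \cite{SmithTightClosure} and no argument is given. Your sketches are broadly in the spirit of the standard proofs found in those references, with the following comments.

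Your argument for \textbf{Finite extensions} is correct and is essentially the textbook proof: manufacturing an $R$-linear map $c\pi \colon S \to R$ restricting to $c\cdot\id_R$ on $R$, and applying it to the Frobenius-power relations, is exactly the device used in \cite{HunekeTightClosureBook}.

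For \textbf{Persistence} you have correctly isolated both the obstruction (the original witness $c$ may die under $\phi$) and the remedy (replace $c$ by a completely stable test element and factor $R \to S$ through $R/\ker\phi$). You are right that the substantive content lies in the existence results of \cite{HochsterHunekeFRegularityTestElementsBaseChange}; once those are in hand the rest is formal.

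For \textbf{Colon capturing} the overall shape is right but there are two imprecisions. First, ``$R[c^{-1}]$ is Cohen--Macaulay'' and ``$c$ annihilates $H^j_{\bm}(R)$ for $j<d$'' are not equivalent: the former concerns primes avoiding $c$, while the latter is a statement at the fixed maximal ideal $\bm$, which typically contains $c$. Second, and more seriously, even granting $cH^j_{\bm}(R)=0$ for $j<d$, the passage to $cz^{p^e}\in(x_1^{p^e},\dots,x_i^{p^e})$ uniformly in $e$ is not immediate: the finite-level colon quotients only map to local cohomology through a direct limit, and an element killing the limit need not kill each term. The standard argument instead completes, writes $\hat R$ as module-finite over a regular local ring $A$ in which $x_1,\dots,x_d$ are regular parameters, and chooses $c$ so that $\hat R_c$ is $A_c$-free. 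A fixed finite free $A$-resolution of $\hat R$ then admits a contracting homotopy over $A_c$ with bounded denominators, so a single power of $c$ annihilates $\Tor_k^A(\hat R, A/(x_1^{p^e},\dots,x_d^{p^e}))$ for all $k>0$ and all $e$; since the colon quotient embeds in such a $\Tor$, the uniform conclusion follows. This is the route taken in \cite{HunekeTightClosureBook}, and it would tighten your sketch considerably to phrase the uniformity via $\Tor$ over a regular base rather than via local cohomology at $\bm$.
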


Perhaps the most important open problem in tight closure theory is the following.

\begin{conjecture}
\label{conj:strong=weak}
 $R$ is strongly $F$-regular if and only if $I^* = I$ for all ideals $I$.
\end{conjecture}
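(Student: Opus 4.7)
Since this is an open conjecture, my plan is necessarily a research program rather than a proof outline. The forward direction was already established as item (iv) of the proposition immediately preceding the conjecture, so the entire content lies in the reverse implication.

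My approach would begin with the contrapositive: assume $\tau(R) \neq R$ and attempt to produce an ideal $I \subsetneq I^*$. The first step is to identify the hypothesis ``$I = I^*$ for all ideals $I$'' (classical \emph{weak $F$-regularity}) with the statement $\tau_{\mathrm{fg}}(R) = R$ for the finitistic test ideal of the convention box in the introduction. Since the containment $\tau(R) \subseteq \tau_{\mathrm{fg}}(R)$ holds unconditionally, Conjecture \ref{conj:strong=weak} is subsumed by Conjecture \ref{conj.FinitisticTestIdealIsTestIdeal} on the coincidence of the finitistic and non-finitistic test ideals, which is the form one would actually attack.

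Next I would pass to the local setting $(R, \mathfrak{m})$, since both conditions localize, and invoke the known theorem that weakly $F$-regular rings are Cohen-Macaulay and normal. Via local and Matlis duality this converts the question into a statement about the tight closure of the zero submodule of the injective hull $E = E_R(R/\mathfrak{m})$: the hypothesis $\tau_{\mathrm{fg}}(R) = R$ amounts to vanishing of this tight closure on every finitely generated submodule of $E$, whereas strong $F$-regularity $\tau(R) = R$ amounts to the vanishing on all of $E$. The conjecture thereby becomes a commutation of tight closure with a direct limit inside $E$.

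The main obstacle, and the reason this has resisted proof since the late 1980s, is precisely this passage from finitely generated submodules of $E$ to $E$ itself. In the $\mathbb{Q}$-Gorenstein case one bypasses the difficulty because the single map $\Phi_R^e$ of the lemma preceding Theorem \ref{thm.TauForGorensteinRing} generates $\Hom_R(R^{1/p^e}, R)$ as an $R^{1/p^e}$-module, and the proof of Theorem \ref{thm.TauForGorensteinRing} then forces the two test ideals to coincide. Outside the $\mathbb{Q}$-Gorenstein setting $\Hom_R(R^{1/p^e}, R)$ can be arbitrarily complicated over $R^{1/p^e}$, and no analogous collapsing mechanism is known; I expect this is exactly where any proof would require a substantially new idea, or conversely where a careful search for a pathological non-$\mathbb{Q}$-Gorenstein example might eventually produce a counterexample.
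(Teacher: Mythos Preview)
Your proposal is entirely appropriate: the statement is an open conjecture, and the paper offers no proof. The paper's only comment is the remark immediately following the conjecture, which (i) cites several references for known special cases (graded rings, $\bQ$-Gorenstein rings, and others), and (ii) observes that the forward implication follows at once from the splitting argument of Lemma~\ref{lem.ExtendAndContractFromASplitting}. Your attribution of the forward direction to item (iv) of the preceding proposition is equivalent, since that item is proved by exactly this splitting method.

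Your outline of the reverse implication as a research program is accurate and well-aligned with the literature: the reduction to Conjecture~\ref{conj.FinitisticTestIdealIsTestIdeal}, the passage to the local Cohen--Macaulay setting, the reformulation via $0^*_E$ and the finitistic versus non-finitistic distinction, and the identification of the $\bQ$-Gorenstein case as the boundary of current techniques all reflect the actual state of the problem. There is nothing to correct here; you have correctly identified that no proof exists and sketched where the difficulty lies.
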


\begin{remark}
A number of special cases of this conjecture are known; see \cite[Theorem 12.2]{HunekeTightClosureBook}, \cite{LyubeznikSmithStrongWeakFregularityEquivalentforGraded}, \cite{LyubeznikSmithCommutationOfTestIdealWithLocalization}, \cite{AberbachMacCrimmonSomeResultsOnTestElements} and \cite{StubbsThesis}.  One should note that the method of Lemma \ref{lem.ExtendAndContractFromASplitting} immediately yields the ($\Rightarrow$) implication.
\end{remark}

In fact, one can also simply use tight closure of ideals to define a slightly different variant of test ideals.

\begin{definition}
Suppose that $R$ is a domain essentially of finite type over a perfect field.  Define $\tau_{\textnormal{fg}}(R)$ to be $\bigcap_{I \subseteq R} (I :_R I^*)$.  This ideal is called the \emph{finitistic test ideal} or sometimes the \emph{classical test ideal}.
\end{definition}

\begin{remark}
The definition of the test ideal in this article is non-standard.  Normally $\tau_{\textnormal{fg}}(R)$ is called the \emph{test ideal}, while the ideal we denoted by $\tau(R)$ is called the \emph{non-finitistic test ideal}, or sometimes the \emph{big test ideal} and is commonly denoted by $\tld \tau(R)$ or $\tau_b(R)$.  It is hoped that these two potentially different ideals always coincide, see the conjecture below.  However, even if they do not, there now seems to be consensus that the \emph{non-finitistic test ideal} is the better notion.
\end{remark}

\begin{conjecture}
\label{conj.FinitisticTestIdealIsTestIdeal}
The ideals $\tau_{\textnormal{fg}}(R)$ and $\tau(R)$ coincide.
\end{conjecture}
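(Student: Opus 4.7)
The plan is to establish the two containments separately. The inclusion $\tau(R) \subseteq \tau_{\textnormal{fg}}(R)$ is tractable with tools already developed in the excerpt, while the reverse inclusion is the substance of the conjecture and will require reducing to a notoriously difficult tight-closure question.

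For $\tau(R) \subseteq \tau_{\textnormal{fg}}(R)$, my plan is to verify that $\tau_{\textnormal{fg}}(R)$ is itself $\phi$-compatible for every $\phi \in \Hom_R(R^{1/p^e}, R)$ and every $e > 0$; since $\tau_{\textnormal{fg}}(R)$ is nonzero (Lemma~\ref{lem.TestElementsExist} produces a classical test element), the minimality characterization of $\tau(R)$ then forces the inclusion. Concretely, fix $c \in \tau_{\textnormal{fg}}(R)$, an ideal $I \subseteq R$, and $z \in I^{*}$. Iterating Frobenius on the defining relation for tight closure shows $z^{p^e} \in (I^{[p^e]})^{*}$, so membership of $c$ in $\tau_{\textnormal{fg}}(R)$ gives $cz^{p^e} \in I^{[p^e]}$, whence $(cz^{p^e})^{1/p^e} \in I R^{1/p^e}$. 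By $R$-linearity,
\[
\phi(c^{1/p^e}) \cdot z \; = \; \phi(c^{1/p^e} z) \; = \; \phi\bigl( (cz^{p^e})^{1/p^e} \bigr) \; \in \; \phi(I R^{1/p^e}) \; \subseteq \; I,
\]
so $\phi(c^{1/p^e}) \in \bigcap_I (I :_R I^{*}) = \tau_{\textnormal{fg}}(R)$, as needed.

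For the reverse inclusion, my plan is to reduce to the complete local case, using compatibility of $\tau(R)$ with localization together with the fact (Remark~\ref{rem.CompletelyStableTestElement}) that completely stable test elements remain test elements after completion, and then dualize via Matlis duality. This recasts $\tau_{\textnormal{fg}}(R)$ as the annihilator of the tight closure of zero in the injective hull $E = E_R(R/\mathfrak{m})$ computed using finitely generated submodules, and $\tau(R)$ as the annihilator of the analogous closure computed using all submodules. One would then aim to write each $u \in \tau_{\textnormal{fg}}(R)$ in the form $u = \sum_i \psi_i\bigl( (c_i r_i)^{1/p^{e_i}} \bigr)$ with each $c_i$ a completely stable test element and $\psi_i \in \Hom_R(R^{1/p^{e_i}}, R)$; by Theorem~\ref{thm.TestIdealExist} each summand would then lie in $\tau(R)$.

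The main obstacle is that producing such a decomposition is equivalent to Conjecture~\ref{conj:strong=weak}, the long-open question of whether weak and strong $F$-regularity coincide, resolved only in special cases such as $\bN$-graded rings, Gorenstein rings (where Theorem~\ref{thm.TauForGorensteinRing} already collapses the two ideals automatically), and certain low-dimensional settings. The difficulty is intrinsic to the sum description of Theorem~\ref{thm.TestIdealExist}: it constructs $\tau(R)$ from a single completely stable test element, whereas a typical element of $\tau_{\textnormal{fg}}(R)$ arises only as an annihilator of tight closure relations and carries no a priori witness of completely stable character that one could feed into the construction. A successful proof would almost certainly require a genuinely new mechanism for comparing finitistic and non-finitistic tight closure of zero in $E$.
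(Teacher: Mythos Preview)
The statement you are attempting to prove is labeled as a \emph{conjecture} in the paper, and the paper supplies no proof of it: only the inclusion $\tau(R) \subseteq \tau_{\textnormal{fg}}(R)$ is asserted, and even that is left as an exercise immediately following the conjecture. Your argument for this easy direction is correct and is essentially what the paper has in mind (showing that $\tau_{\textnormal{fg}}(R)$ is $\phi$-compatible by using the fact that $z \in I^{*}$ implies $z^{p^{e}} \in (I^{[p^{e}]})^{*}$, exactly the hint of Exercise~\ref{ex.TestElementsMakeUpTestElements}).

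Your discussion of the reverse inclusion is an honest account of an open problem rather than a proof, and you correctly flag the obstruction. One small overstatement: you describe the difficulty as \emph{equivalent} to Conjecture~\ref{conj:strong=weak}, but the relationship is asymmetric. If $\tau_{\textnormal{fg}}(R) = \tau(R)$ always, then weak $F$-regularity ($\tau_{\textnormal{fg}}(R) = R$) forces strong $F$-regularity ($\tau(R) = R$), so the present conjecture implies Conjecture~\ref{conj:strong=weak}. The converse implication---that knowing weak and strong $F$-regularity coincide would let you prove the two test ideals agree in \emph{all} rings, not just those where they equal $R$---is not at all clear, and your outline does not supply a mechanism for it. So ``at least as hard as'' is accurate, but ``equivalent to'' is not justified.
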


\begin{exercise}  \cf \cite[Theorem 7.1(4)]{LyubeznikSmithCommutationOfTestIdealWithLocalization}
Prove that $\tau(R) \subseteq \tau_{\textnormal{fg}}(R)$.
\end{exercise}

\begin{exercise} \cite[Proposition 2.5]{FedderWatanabe}
Suppose that $R$ is $F$-pure, show that $\tau_{\textnormal{fg}}(R)$ is a radical ideal.
\end{exercise}

\subsection{The {B}rian\c con-{S}koda theorem}

Now we move on to one of the classical applications of tight closure theory, a very simple proof of the {B}rian\c con-{S}koda theorem.

\begin{theorem}\cite[Theorem 5.4]{HochsterHunekeTC1}
\label{thm:brianconskoda}
Let $R$ be an $F$-finite domain, and $(u_1, \dots, u_n) = I \subseteq R$ an ideal.  Then for every natural number $m$,
\[
\overline{I^{m+n}} \subseteq \overline{I^{m+n - 1}} \subseteq (I^{m})^*
\]
and so
\[
\tau(R)  \overline{I^{m+n}} \subseteq \tau_{\textnormal{fg}}(R) \overline{I^{m+n}} \subseteq I^m.
\]
\end{theorem}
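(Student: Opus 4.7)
The plan is to establish the two inclusions displayed in the statement one at a time, then derive the displayed consequence using the definition of $\tau_{\textnormal{fg}}(R)$.

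The first inclusion $\overline{I^{m+n}} \subseteq \overline{I^{m+n-1}}$ is immediate from the monotonicity of integral closure applied to $I^{m+n} \subseteq I^{m+n-1}$. The core of the argument is the second inclusion $\overline{I^{m+n-1}} \subseteq (I^m)^*$. Given $x \in \overline{I^{m+n-1}}$, I would use the characterization of integral closure recalled in the hint for Proposition (v): there exists a single nonzero $c \in R$ (independent of $N$) such that $c x^N \in (I^{m+n-1})^N = I^{N(m+n-1)}$ for every $N > 0$. Specializing to $N = p^e$ for every $e \geq 0$ reduces the problem to showing the purely combinatorial containment
\[
I^{p^e(m+n-1)} \subseteq (I^m)^{[p^e]}
\qquad \text{for all } e \geq 0,
\]
after which $c x^{p^e} \in (I^m)^{[p^e]}$ for all $e$ gives exactly $x \in (I^m)^*$.

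The combinatorial step is the main obstacle and is a pigeonhole-type calculation. Writing $I = (u_1,\dots,u_n)$, it suffices to show each monomial generator $u_1^{b_1}\cdots u_n^{b_n}$ of $I^{p^e(m+n-1)}$ (so $\sum b_i = p^e(m+n-1)$) lies in $(I^m)^{[p^e]}$, which is generated by monomials $u_1^{a_1 p^e}\cdots u_n^{a_n p^e}$ with $\sum a_i = m$. Writing $b_i = a_i p^e + r_i$ with $0 \leq r_i \leq p^e - 1$, the total $\sum r_i \leq n(p^e - 1)$ yields
\[
p^e\sum a_i \;=\; p^e(m+n-1) - \sum r_i \;\geq\; p^e(m-1) + n,
\]
so $\sum a_i \geq m$. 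Then I can choose $a_i' \leq a_i$ with $\sum a_i' = m$, factor $u_1^{a_1'p^e}\cdots u_n^{a_n'p^e}$ out of $u_1^{b_1}\cdots u_n^{b_n}$, and conclude membership in $(I^m)^{[p^e]}$.

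Finally, to obtain the displayed chain $\tau(R)\overline{I^{m+n}} \subseteq \tau_{\textnormal{fg}}(R)\overline{I^{m+n}} \subseteq I^m$: the first containment is the exercise $\tau(R) \subseteq \tau_{\textnormal{fg}}(R)$ proved earlier in Section \ref{sec.TightClosure}. For the second, combine the two inclusions already established, giving $\overline{I^{m+n}} \subseteq (I^m)^*$, with the definition $\tau_{\textnormal{fg}}(R) = \bigcap_J (J :_R J^*)$, which forces $\tau_{\textnormal{fg}}(R)\cdot (I^m)^* \subseteq I^m$, so that $\tau_{\textnormal{fg}}(R)\overline{I^{m+n}} \subseteq \tau_{\textnormal{fg}}(R)(I^m)^* \subseteq I^m$ as desired.
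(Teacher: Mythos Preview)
Your proof is correct and follows essentially the same approach as the paper: the integral-closure characterization $cx^{N}\in I^{N(m+n-1)}$ specialized to $N=p^{e}$, followed by the same pigeonhole/division-with-remainder computation on monomial exponents to land in $(I^{m})^{[p^{e}]}$. The only cosmetic differences are that the paper swaps the roles of your $a_{i}$ and $b_{i}$ and phrases the inequality $\sum a_{i}\geq m$ by contradiction rather than directly; your explicit derivation of the final test-ideal containments from $\tau(R)\subseteq\tau_{\textnormal{fg}}(R)$ and the definition of $\tau_{\textnormal{fg}}(R)$ is in fact more detailed than the paper, which simply asserts ``and so.''
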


\noindent
This gives a particularly nice statement in the case that $R$ is strongly $F$-regular (because $\tau(R) = R$).

\begin{proof}[This proof is taken from \cite{HochsterFoundations}]
For any $y \in \overline{I^{m+n - 1}}$, we know that there exists $0 \neq c \in R$ such that $cy^l \in (I^{m+n-1})^l$ for all $l \geq 0$, see \cite[Exercise 1.5]{HunekeSwansonIntegralClosure}.  Consider a monomial $u_1^{a_1} \dots u_n^{a_n}$ where $a_1 + \dots + a_n = l(m+n-1)l$.  Write each $a_i = b_il + r_i$ where $0 \leq r_i \leq l-1$.  We claim that the sum of the $b_i$ is at least $m$, which will imply that the monomial is contained in $(I^m)^{[l]}$ for all $l$ such that $l = p^e$.  However, if the sum $b_1 + \dots + b_m \leq m-1$, then $l(m+n-1) = \sum a_i \leq l(m-1) + n(l-1) = l(m+n - 1) - n < l(m+n-1)$, which implies the claim.

Thus $cy^{p^e} \in I^{[p^e]}$ and so $y \in (I^m)^*$ as desired.
\end{proof}

\begin{exercise}
\label{exerc:pulloffbracketpower}
  Following the method in the above proof, if $\mathfrak{a}$ is an ideal generated by $r$
  elements, show that $\mathfrak{a}^{rp^{e}} \subseteq
  \mathfrak{a}^{[p^{e}]}\mathfrak{a}^{(r-1)p^{e}}$ for all $e \geq 0$.
\end{exercise}

\subsection{Tight closure for modules and test elements}

\begin{definition}
Suppose that $R$ is a domain and that $M$ is an $R$-module.  We define the \emph{tight closure of $0$ in $M$}, denoted $0^*_M$ as follows.
\[
0^*_M = \{ \; m \in M \; | \; \exists \; 0 \neq c \in R, \text{ such
  that } 0 = m \tensor c^{1/p^e} \in M \tensor_R R^{1/p^e} \text{ for
  all $e \geq 0$.} \; \}
\]
If $0^*_M = 0$, then we say that \emph{$0$ is tightly closed in $M$}.
\end{definition}

\begin{remark}
\label{rem.TightClosureOfModules}
More generally, given a submodule $N \subseteq M$, one can define $N^*_M \subseteq M$, the \emph{tight closure of $N$ in $M$}.  However, this submodule is just the pre-image of $0^*_{M/N} \subseteq M/N$ under the natural surjection $M \to M/N$, see \cite[Remark 8.4]{HochsterHunekeTC1}.
\end{remark}

It is known that a ring is strongly $F$-regular if and only if $0$ is tightly closed in every module, \cite{HochsterFoundations}.  By Remark \ref{rem.TightClosureOfModules}, note that $I^* = I$, if and only if $0^*_{R/I} = 0$.

We conclude with one more definition.

\begin{definition}
An element $0 \neq c \in R$ is called a \emph{finitistic test element} if for every ideal $I \subseteq R$ and every $z \in I^*$, we have
\[
c z^{p^e} \in I^{[p^e]}.
\]
An element $0 \neq c \in R$ is called a \emph{(big) test element} if for every $R$-module $M$ and every $z \in 0^*_M$, we have
\[
0 = z \tensor c^{1/p^e} \in M \tensor_R R^{1/p^e}.
\]
\end{definition}

\begin{theorem}
Suppose that $c \in R$ is chosen as in Lemma \ref{lem.TestElementsExist} for some non-zero $R$-linear map $\phi : R^{1/p^e} \to R$.  Then $c$ is a finitistic test element and a big test element.
\end{theorem}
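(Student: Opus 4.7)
The plan is to prove the big test element statement first, and then to deduce the finitistic test element statement from it using the characterization of $I^{*}$ via $0^{*}_{R/I}$ (Remark \ref{rem.TightClosureOfModules}).

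Suppose $z \in 0^{*}_{M}$, so there exists $0 \neq d \in R$ with $z \otimes d^{1/p^{m}} = 0$ in $M \otimes_{R} R^{1/p^{m}}$ for every $m \geq 0$. Our choice of $c$ from Lemma \ref{lem.TestElementsExist} gives an integer $n > 0$ and an element $r \in R$ such that $c = \phi^{n}\bigl((rd)^{1/p^{ne}}\bigr)$. Fix an arbitrary $a \geq 0$; I need to show $z \otimes c^{1/p^{a}} = 0$ in $M \otimes_{R} R^{1/p^{a}}$. The key device is a ``shifted'' map $\phi^{n}_{(a)} \colon R^{1/p^{ne+a}} \to R^{1/p^{a}}$ defined by $\phi^{n}_{(a)}(y^{1/p^{a}}) = \phi^{n}(y)^{1/p^{a}}$ for $y \in R^{1/p^{ne}}$; since every element of $R^{1/p^{ne+a}}$ has the form $y^{1/p^{a}}$ for a unique $y \in R^{1/p^{ne}}$, this is well-defined and easily checked to be $R^{1/p^{a}}$-linear (hence $R$-linear). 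By construction, $\phi^{n}_{(a)}\bigl((rd)^{1/p^{ne+a}}\bigr) = c^{1/p^{a}}$.

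Now I tensor $\phi^{n}_{(a)}$ on the left with $M$ over $R$ to obtain an $R$-linear map $M \otimes_{R} R^{1/p^{ne+a}} \to M \otimes_{R} R^{1/p^{a}}$. Observe that the element
\[
z \otimes (rd)^{1/p^{ne+a}} \;=\; \bigl(z \otimes d^{1/p^{ne+a}}\bigr) \cdot r^{1/p^{ne+a}}
\]
vanishes in $M \otimes_{R} R^{1/p^{ne+a}}$ by the defining property of $d$ (taking $m = ne+a$). Applying the map above sends this zero element to $z \otimes c^{1/p^{a}}$, so $z \otimes c^{1/p^{a}} = 0$, as desired. Since $a$ was arbitrary, $c$ is a big test element.

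For the finitistic test element property, let $I \subseteq R$ be an ideal and suppose $z \in I^{*}$. By Remark \ref{rem.TightClosureOfModules}, the class $[z] \in R/I$ lies in $0^{*}_{R/I}$. By what we just proved, $[z] \otimes c^{1/p^{a}} = 0$ in $(R/I) \otimes_{R} R^{1/p^{a}}$ for every $a \geq 0$. Under the identification $(R/I) \otimes_{R} R^{1/p^{a}} \cong R^{1/p^{a}} / (I^{[p^{a}]})^{1/p^{a}}$, this vanishing translates to $z \cdot c^{1/p^{a}} \in (I^{[p^{a}]})^{1/p^{a}}$; raising to the $p^{a}$-th power yields $c z^{p^{a}} \in I^{[p^{a}]}$, which is the required condition.

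The main obstacle is the careful construction of the $R$-linear shifted map $\phi^{n}_{(a)}$ and the verification that it does what we want on the element $(rd)^{1/p^{ne+a}}$; once this bookkeeping is in hand, the argument is essentially an application of Lemma \ref{lem.TestElementsExist} plus functoriality of tensor product. Note that the alternative approach of tensoring $\phi \colon R^{1/p^{e}} \to R$ with $R^{1/p^{a}}$ over $R$ leads to the awkward module $R^{1/p^{e}} \otimes_{R} R^{1/p^{a}}$, which is why the direct construction of $\phi^{n}_{(a)}$ via $p^{a}$-th roots is preferable.
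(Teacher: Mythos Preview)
Your proof is correct. The underlying mechanism is the same as the paper's: use Lemma~\ref{lem.TestElementsExist} to write $c = \phi^{n}((rd)^{1/p^{ne}})$ and then apply an appropriate ``shifted'' incarnation of $\phi^{n}$ to push the vanishing information for $d$ over to $c$.

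The main difference is organizational. The paper proves the finitistic case first by a direct argument on ideals (apply $\phi^{e_0}$ to the containment $dz^{p^e}\in I^{[p^e]}$ for $e\geq e_0$ to obtain $cz^{p^{e-e_0}}\in I^{[p^{e-e_0}]}$), and then leaves the big case as an exercise with a hint to tensor $\phi$ against the module. You instead handle the big case first via your shifted map $\phi^{n}_{(a)}\colon R^{1/p^{ne+a}}\to R^{1/p^{a}}$ and then deduce the finitistic case from the big case through Remark~\ref{rem.TightClosureOfModules}. Your route is slightly more economical since the finitistic statement comes for free once the big statement is established, while the paper's direct argument for the finitistic case is marginally more elementary (no tensor products are needed). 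Your construction of $\phi^{n}_{(a)}$ is also a bit more careful than the paper's hint, which only describes the map landing in $M\otimes_R R = M$ rather than in $M\otimes_R R^{1/p^{a}}$ for general $a$.
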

\begin{proof}
First consider the finitistic case.

Suppose that $z \in I^*$.  Then there exists a $0 \neq d \in R$ such that $d z^{p^e} \in I^{[p^e]}$ for all $e \geq 0$.  Fix $\phi : R^{1/p} \to R$.  It follows from Lemma \ref{lem.TestElementsExist} that there exists an integer $e_0 > 0$ such that $\phi^{e_0}(d^{1/p^{e_0}}) = c$.  Applying $\phi^{e_0}$ to the equation $d z^{p^e} \in I^{[p^e]}$ for $e \geq e_0$ yields
\[
c z^{p^{e - e_0}} \in \phi^{e_0}(I^{[p^e]}) \subseteq I^{[p^{e - e_0}]}.
\]
Since this holds for all $e \geq e_0$, we see that $c$ is indeed a finitistic test element.

We leave the non-finitistic case to the reader.  It is essentially the same argument but instead one considers the map $E \tensor_R R^{1/p^e} \to E \tensor_R R = E$ which defined by $z \tensor d^{1/p^e} \mapsto z \tensor \phi(d^{1/p^e}) = \phi(d^{1/p^e})z$ where $\phi : R^{1/p^e} \to R$ is the map given in the hypothesis.
\end{proof}

\begin{starexercise} \cite{HochsterHunekeTC1}, \cite{HochsterFoundations}
\label{ex.TestElementsMakeUpTestElements}
Show that $\tau_{\textnormal{fg}}(R)$ is generated by the set of finitistic test elements.  Even more,
\[
\tau_{\textnormal{fg}}(R) = \{ \text{ the set of all of the finitistic test elements of $R$} \} \cup \{ 0 \}.
\]
Furthermore, show that $\tau(R)$ is likewise generated by the big test elements of $R$.\\
\emph{Hint: } Suppose that $z \in I^*$, show that for every $e > 0$, $z^{p^e} \in (I^{[p^e]})^*$.
\end{starexercise}

\begin{remark}
\label{rem.TestIdealEtymology}
Test ideals are made up of test elements, or those elements which can be used to \emph{test} tight closure containments.  This is the etymology of the name ``test ideals''.
\end{remark}

\begin{exercise}
Suppose that $(R, \bm)$ is a local domain and $E$ is the injective hull of the residue field $R/\bm$.  Show that $0^*_E$ is the Matlis dual of $R/\tau(R)$.
\\ \emph{Hint: } Choose an element $0 \neq c \in R$ satisfying the conclusion of Lemma \ref{lem.TestElementsExist}.  Show that
\[
0^*_E = \bigcap_{e \geq 0} \ker \left( E \to E \tensor R^{1/p^e} \right)
\]
where the maps in the intersection send $z \mapsto z \tensor c^{1/p^e}$.  Show that this intersection is the Matlis dual of the construction of the test ideal found in Theorem \ref{thm.TestIdealExist}.  See Appendix \ref{sec.appendixDuality} for Matlis Duality.
\end{exercise}

\section{Test ideals for pairs $(R, \ba^t)$ and applications}
\label{sec.TestIdealIdealPairs}

\renewcommand{\a}{\mathfrak{a}}
\renewcommand{\b}{\mathfrak{b}}

As mentioned before, many of the most important applications of multiplier ideals in characteristic zero were for multiplier ideals of pairs.  Another variant of pairs not discussed thus-far in this survey is the pair $(R_0, \ba_0^t)$ where $R_0$ is a normal $\bQ$-Gorenstein domain of finite type over $\bC$, $\ba_0$ is a non-zero ideal and $t \geq 0$ is a real number.  The associated multiplier ideals $\mJ(\Spec R_0, \ba_0^t)$ are important in many applications and have themselves become objects of independent interest, see for example \cite{LazarsfeldPositivity2} and \cite{LazarsfeldLeeLocalSyzygiesOfMultiplierIdeals}.  Inspired by this relation, N. Hara and K.-i. Yoshida defined test ideals for such pairs.  They also proved the analog of Theorem \ref{thm.MultiplierIdealBecomesTestIdeal} showing that the multiplier ideal coincides with the test ideal after reduction to characteristic $p \gg 0$.

\begin{setting*}
In this section, unless otherwise specified, all rings are assumed to be integral domains essentially of finite type over a perfect field of characteristic $p > 0$.
\end{setting*}

\subsection{Initial definitions of $\ba^t$-test ideals}

We now show how to incorporate an ideal $\a$ and coefficient $t \in
\Q_{\geq 0}$ into the test ideal.  An important motivating case is
when $R$ is in fact regular; in this situation, one should think of
this addition as roughly measuring the singularities of (a multiple
of) the closed subscheme of $\Spec(R)$ defined by $\a$.

\begin{definition} \cite{HaraYoshidaGeneralizationOfTightClosure}, \cite{SchwedeSharpTestElements}
Suppose that $R$ is a ring, let $\a \subseteq R$ be a non-zero ideal, and $t \in
\Q_{\geq 0}$.  We define the \emph{test ideal $\tau(R, \a^{t})$} (or
simply $\tau(\a^{t})$ when confusion is unlikely to arise) to be the
unique smallest non-zero ideal $J \subseteq R$ such that
$\phi((\a^{\lceil t (p^{e}-1) \rceil}J)^{1/p^e}) \subseteq J$ for all $e > 0$ and all $\phi \in \Hom_R(R^{1/p^e}, R)$.
\end{definition}

\begin{remark}
  In other words, $\tau(R,\a^{t})$ is in fact the unique smallest non-zero
  ideal which is \mbox{$\phi$-compatible} for all $\phi \in (\a^{\lceil
  t(p^{e}-1)\rceil})^{1/p^{e}}\cdot \Hom_{R}(R^{1/p^{e}},R)$ and all $e \geq
0$.  Again, it is unclear that a smallest such non-zero ideal exists.
\end{remark}

\begin{remark}
 As in previous sections, N. Hara and K.-i. Yoshida's original definition was the \emph{finitistic test ideal} of a pair.  In particular, they defined $\tau_{\textnormal{fg}}(R, \a^t)$ to be $\bigcap_{I \subseteq R} (I : I^{*\ba^t})$ where $I^{*\ba^t}$ is the $\ba^t$-tight closure of $I$, see Definition \ref{def.atTightClosure} below.  These two ideals are known to coincide in many cases including the case that $R$ is $\bQ$-Gorenstein.
\end{remark}

\begin{theorem} \cite{HaraTakagiOnAGeneralizationOfTestIdeals}
\label{thm:attestexists}
 Suppose that $R$ is a ring, $\a \subseteq R$ is a non-zero ideal, and $t \in
\Q_{\geq 0}$.
Then,
for any non-zero $c \in \tau(R, \psi)$, we have
\[
\tau(R, \a^{t}) = \sum_{e \geq 0} \sum_{\phi} \phi((c\a^{\lceil t (p^{e}-1)\rceil})^{1/p^{e}})
\]
where the inner sum runs over $\phi \in \Hom_{R}(R^{1/p^{e}}, R)$.
More generally, the above equality remains true if $c$ is replaced by
any non-zero element of $\tau(R, \a^{t})$.
\end{theorem}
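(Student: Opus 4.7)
The plan is to mirror the arguments used for Theorems \ref{thm.TestIdealPairsExist} and \ref{thm.TestIdealExist}. Write $J$ for the right-hand side of the displayed equality. The $e=0$, $\phi = \id$ summand is just $cR$, so $J$ is a non-zero ideal of $R$ (and in particular the sum is well-defined). The proof decomposes into two containments: (a) $J$ satisfies the defining compatibility property of $\tau(R, \a^t)$, which establishes $\tau(R,\a^t) \subseteq J$ (and, in passing, the existence of $\tau(R,\a^t)$); and (b) conversely, $J \subseteq \tau(R,\a^t)$, which is immediate from the definition of $\tau(R,\a^t)$ once one knows that $c \in \tau(R,\a^t)$.

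The heart of the proof is step (a). Fix $\psi \in \Hom_R(R^{1/p^{e_1}}, R)$ and $z \in \a^{\lceil t(p^{e_1}-1)\rceil}$, and consider a typical generator $x = \phi((cy)^{1/p^{e_0}}) \in J$ with $\phi \in \Hom_R(R^{1/p^{e_0}}, R)$ and $y \in \a^{\lceil t(p^{e_0}-1)\rceil}$. The key computation uses that $\phi$ induces an $R^{1/p^{e_1}}$-linear map $\phi^{1/p^{e_1}} \colon R^{1/p^{e_0+e_1}} \to R^{1/p^{e_1}}$:
\[
\psi\bigl(z^{1/p^{e_1}} \cdot x^{1/p^{e_1}}\bigr) \;=\; \bigl(\psi \circ \phi^{1/p^{e_1}}\bigr)\bigl((c \cdot y z^{p^{e_0}})^{1/p^{e_0+e_1}}\bigr).
\]
The composition $\psi \circ \phi^{1/p^{e_1}}$ lies in $\Hom_R(R^{1/p^{e_0+e_1}}, R)$, so this expression is a contribution to the summand of $J$ indexed by $e_0 + e_1$ provided $yz^{p^{e_0}} \in \a^{\lceil t(p^{e_0+e_1}-1)\rceil}$. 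The latter follows from $y z^{p^{e_0}} \in \a^{\lceil t(p^{e_0}-1)\rceil + p^{e_0}\lceil t(p^{e_1}-1)\rceil}$ together with the inequality
\[
\lceil t(p^{e_0}-1)\rceil + p^{e_0}\lceil t(p^{e_1}-1)\rceil \;\geq\; \lceil t(p^{e_0+e_1}-1)\rceil,
\]
which in turn comes from the identity $p^{e_0+e_1}-1 = p^{e_0}(p^{e_1}-1) + (p^{e_0}-1)$ combined with subadditivity $\lceil a\rceil + \lceil b\rceil \geq \lceil a+b\rceil$. This verifies the compatibility property for $J$, and since $J$ is non-zero we conclude $\tau(R,\a^t) \subseteq J$ by minimality; this also proves existence.

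For step (b), assume $c \in \tau(R, \a^t)$. Then each summand satisfies
\[
\phi\bigl((c\a^{\lceil t(p^e-1)\rceil})^{1/p^e}\bigr) \;\subseteq\; \phi\bigl((\tau(R,\a^t) \cdot \a^{\lceil t(p^e-1)\rceil})^{1/p^e}\bigr) \;\subseteq\; \tau(R,\a^t)
\]
directly from the defining compatibility of $\tau(R,\a^t)$, and summing yields $J \subseteq \tau(R,\a^t)$. For the first (less general) version of the statement, one verifies separately that the chosen $c$ lies in every non-zero $\a^t$-compatible ideal, by an argument along the lines of Lemma \ref{lem.TestElementsExist} applied to maps twisted by a suitable power of $\a$, in the spirit of Exercise \ref{ex.TestIdealOfPhiM}.

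The main obstacle is the ceiling bookkeeping in step (a): one must split the exponent $\lceil t(p^{e_0+e_1}-1)\rceil$ between the two maps $\phi$ and $\psi$ so that the image lands in a single summand of $J$. Once the identity $p^{e_0+e_1}-1 = p^{e_0}(p^{e_1}-1) + (p^{e_0}-1)$ is in hand, the rest is formal manipulation with $R$-linearity of $F$-twisted maps.
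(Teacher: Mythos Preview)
Your proposal is correct and follows exactly the approach the paper intends: the paper's own ``proof'' is merely the sentence ``This is left as an exercise to the reader.  For a hint, reduce to the case of Lemma~\ref{lem.TestElementsExist},'' and what you have written is precisely the natural expansion of that hint along the lines of Theorems~\ref{thm.TestIdealPairsExist} and~\ref{thm.TestIdealExist}. Your ceiling bookkeeping via $p^{e_0+e_1}-1 = p^{e_0}(p^{e_1}-1)+(p^{e_0}-1)$ is the standard way to verify that $J$ is closed under the twisted maps, and your reduction of the ``first version'' to a test-element-type containment is what the paper's hint is pointing at.
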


\begin{proof}
This is left as an exercise to the reader.  For a hint, reduce to the case of Lemma \ref{lem.TestElementsExist}.
\end{proof}

\begin{exercise} \cite[Remark 6.6]{HaraYoshidaGeneralizationOfTightClosure}
   Suppose that $R$ is a ring, $\a_{1}, \ldots, \a_{k} \subseteq R$ are  non-zero ideals, and
$t_{1}, \ldots, t_{k} \in
\Q_{\geq 0}$.  Imitating the above result and its proof, show that one can define a test
ideal $\tau(\a_{1}^{t_{1}}\a_{2}^{t_{2}} \cdots \a_{k}^{t_{k}})$ as the unique smallest non-zero ideal $J \subseteq R$ such that
$$\phi\left(((\prod_{i=1}^{k}\a_{i}^{\lceil t_{i} (p^{e}-1)) \rceil})J\right)^{1/p^e}) \subseteq J$$ for all $e > 0$ and all $\phi \in \Hom_R(R^{1/p^e}, R).$
\end{exercise}

The following property of test ideals was inspired by analogous statement for multiplier ideals.

\begin{theorem} \cite[Remark 2.12]{MustataTakagiWatanabeFThresholdsAndBernsteinSato}, \cite[Corollary 2.16]{BlickleMustataSmithDiscretenessAndRationalityOfFThresholds},  \cite[Lemma 3.23]{BlickleSchwedeTakagiZhang}
Suppose that $R$ is a ring, $\a \subseteq R$ is a non-zero ideal, and $s,t \in
\Q_{\geq 0}$.  If $s \geq t$, then $\tau(\a^{s}) \subseteq \tau(\a^{t})$.
  Furthermore, there exists $\epsilon > 0$ such that $\tau(\a^{s}) =
  \tau(\a^{t})$ for all $s \in [t, t+\epsilon]$.
\end{theorem}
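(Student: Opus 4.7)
The plan is to prove the two assertions separately, using the explicit description of $\tau(R, \a^{t})$ from Theorem \ref{thm:attestexists} together with the Noetherianity of $R$.

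For the monotonicity, I will argue directly from the containment $\a^{\lceil s(p^{e}-1)\rceil} \subseteq \a^{\lceil t(p^{e}-1)\rceil}$, valid for all $e$ when $s \geq t$. Any ideal $J$ which is $\phi$-compatible for all $\phi \in (\a^{\lceil t(p^{e}-1)\rceil})^{1/p^{e}} \cdot \Hom_{R}(R^{1/p^{e}},R)$ is then automatically $\phi$-compatible for all $\phi \in (\a^{\lceil s(p^{e}-1)\rceil})^{1/p^{e}} \cdot \Hom_{R}(R^{1/p^{e}},R)$. Applied to $J = \tau(\a^{t})$, the minimality in the definition of $\tau(\a^{s})$ forces $\tau(\a^{s}) \subseteq \tau(\a^{t})$.

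For the stabilization, I will choose any non-zero $c_{0} \in \tau(\a^{t+1})$; by the monotonicity just established, $c_{0} \in \tau(\a^{s})$ for all $s \in [t, t+1]$. Fixing any non-zero $a \in \a$, the element $c := c_{0} a$ similarly lies in $\tau(\a^{s})$ for $s \in [t, t+1]$, since each of these is an ideal. Applying Theorem \ref{thm:attestexists} at level $t$ with the element $c$,
\[
\tau(\a^{t}) = \sum_{e \geq 0} \sum_{\phi} \phi\bigl((c\,\a^{\lceil t(p^{e}-1)\rceil})^{1/p^{e}}\bigr),
\]
and since $R$ is Noetherian, $\tau(\a^{t})$ is finitely generated. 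Thus finitely many terms in this sum produce generators of the form $\phi_{j}((c h_{j})^{1/p^{e_{j}}})$ with $h_{j} \in \a^{\lceil t(p^{e_{j}}-1) \rceil}$ and $e_{j} \leq N$ for some $N$. I then pick $\epsilon > 0$ small enough that $\epsilon(p^{N}-1) < 1$ and $\epsilon \leq 1$, which guarantees $\lceil s(p^{e_{j}}-1)\rceil \leq \lceil t(p^{e_{j}}-1)\rceil + 1$ for all $j$ and all $s \in [t, t+\epsilon]$.

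For each such generator, $c h_{j} = c_{0}(a h_{j}) \in c_{0}\,\a^{\lceil t(p^{e_{j}}-1)\rceil + 1} \subseteq c_{0}\,\a^{\lceil s(p^{e_{j}}-1)\rceil}$, so applying $\phi_{j}((\cdot)^{1/p^{e_{j}}})$ and invoking Theorem \ref{thm:attestexists} at level $s$ with $c_{0} \in \tau(\a^{s})$, each generator of $\tau(\a^{t})$ already lies in $\tau(\a^{s})$. Summing over $j$ gives $\tau(\a^{t}) \subseteq \tau(\a^{s})$, which together with monotonicity yields the equality $\tau(\a^{s}) = \tau(\a^{t})$. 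The hard part is the possibility that $t(p^{e}-1)$ is an integer for some $e \leq N$, in which case $\lceil s(p^{e}-1)\rceil$ jumps immediately as $s$ crosses $t$; the replacement of $c_{0}$ by $c_{0} a$ in the formula for $\tau(\a^{t})$ is precisely designed to absorb this jump by supplying an extra factor of $\a$.
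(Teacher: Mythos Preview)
Your proof is correct and follows essentially the same approach as the paper: both choose a nonzero element of $\tau(\a^{t+1})$ multiplied by an element of $\a$, truncate the sum from Theorem~\ref{thm:attestexists} at some $N$ by Noetherianity, and then pick $\epsilon$ on the order of $1/(p^{N}-1)$ so that the extra factor from $\a$ absorbs the ceiling jump. The only cosmetic differences are that the paper takes $\epsilon = 1/(p^{N}-1)$ exactly and verifies the containment only at $s = t+\epsilon$ (then invokes monotonicity for intermediate $s$), whereas you take $\epsilon$ strictly smaller and argue for each $s \in [t,t+\epsilon]$ directly.
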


\begin{proof}
 If $s \geq t$, then $(\a^{\lceil s(p^{e}-1)
    \rceil})^{1/p^{e}} \subseteq (\a^{\lceil t(p^{e}-1)
    \rceil})^{1/p^{e}}$
and the first statement is left as an exercise to for the reader.  For the
second statement,
choose non-zero elements
  $c \in \tau(R, \a^{t+1})$ and $x \in \a$.  Using the Noetherian
  property of $R$ with Theorem~\ref{thm:attestexists} above, there
  exists an $N \geq 0$ such that
\[
\tau(R, \a^{t}) = \sum_{e = 0}^{N} \sum_{\phi} \phi((cx\a^{\lceil t (p^{e}-1)\rceil})^{1/p^{e}})
\]
where the inner sum runs over all $\phi \in \Hom_{R}(R^{1/p^{e}},R)$.
Let $\epsilon = \frac{1}{p^{N}-1}$, so that $x  \a^{\lceil t
  (p^{e}-1)\rceil} \subseteq \a^{\lceil (t+\epsilon) (p^{e}-1)\rceil}$
for all $0 \leq e \leq N$.  Thus, we have
\[
\tau(R, \a^{t}) \subseteq \sum_{e \geq 0} \sum_{\phi}
\phi((c\a^{\lceil (t +\epsilon)(p^{e}-1)\rceil})^{1/p^{e}}) = \tau(R, \a^{t+\epsilon})
\]
as desired.
\end{proof}

\begin{definition}  \cite{TakagiWatanabeFPureThresh}, \cite{MustataTakagiWatanabeFThresholdsAndBernsteinSato}, \cite{BlickleMustataSmithDiscretenessAndRationalityOfFThresholds}
\label{def.FJumpingNumber}
  A positive real number $\xi$ is called an \emph{$F$-jumping
    number} of the ideal $\a$ if $\tau(\a^{\xi}) \neq \tau(\a^{\xi -
    \epsilon})$ for all $\epsilon > 0$.  If $R$ is strongly
  $F$-regular, then the smallest $F$-jumping number of $\a$ is called
  the \emph{$F$-pure threshold} of $\a$.
\end{definition}

\begin{remark}
 The $F$-jumping numbers were introduced as characteristic $p > 0$ analogs of jumping numbers of multiplier ideals in characteristic zero, \cite{EinLazSmithVarJumpingCoeffs}.  We point out that a great interest in $F$-jumping numbers has revolved around proving that the set of $F$-jumping numbers form a discrete set of rational numbers.  See \cite{HaraMonskyFPureThresholdsAndFJumpingExponents, BlickleMustataSmithDiscretenessAndRationalityOfFThresholds, BlickleMustataSmithFThresholdsOfHypersurfaces, KatzmanLyubeznikZhangOnDiscretenessAndRationality, TakagiTakahashiDModulesOverRingsWithFFRT, SchwedeTakagiRationalPairs, BlickleSchwedeTakagiZhang, SchwedeDiscretenessQGorenstein}.
\end{remark}

\begin{theorem} \cite{HaraYoshidaGeneralizationOfTightClosure}
\label{thm:atproperties}
  Suppose that $R$ is a ring, $\a, \b \subseteq R$ are non-zero ideals, and $s,t \in
\Q_{\geq 0}$.
\begin{enumerate}[(i.)]
\item If $\a \subseteq b$, then
  $\tau(\a^{t}) \subseteq \tau(\b^{t})$.  Furthermore, if $\a$ is a
  reduction of $\b$ ({i.e. } $\bar{\a} = \bar{\b}$), then
  $\tau(\a^{t}) = \tau(\b^{t})$.
\item
We have $\a \; \tau(\b^{s}) \subseteq \tau(\a \b^{s})$ with equality if
$\a$ is principal.  In particular, if $R$ is strongly $F$-regular,
then $\a \subseteq \tau(\a)$.
\item (Skoda)
If $\a$ is generated by $r$
elements, then $
\tau(\a^{r}\b^{s}) = \a \; \tau(\a^{r-1} \b^{s})$.
\end{enumerate}
\end{theorem}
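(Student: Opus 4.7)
My plan is to treat the three parts in order, using throughout the generating formula of Theorem~\ref{thm:attestexists} and the characterization of $\tau$ as the unique smallest nonzero ideal satisfying the relevant $\phi$-compatibility. For part (i), the containment $\tau(\a^{t}) \subseteq \tau(\b^{t})$ when $\a \subseteq \b$ is immediate from the fact that $\a^{\lceil t(p^{e}-1)\rceil} \subseteq \b^{\lceil t(p^{e}-1)\rceil}$: any ideal compatible in the $\b^{t}$-sense is automatically compatible in the $\a^{t}$-sense, and minimality does the rest. For the reduction statement, I will use the standard consequence of $\bar{\a} = \bar{\b}$ that there exists $N \geq 1$ with $\b^{N+m} = \a^{m}\b^{N}$ for all $m \geq 0$. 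Picking nonzero $c' \in \tau(\a^{t})$ and $d \in \b^{N}$, the element $c := dc'$ is a nonzero member of $\tau(\a^{t}) \subseteq \tau(\b^{t})$, and substituting $c$ as the test element in Theorem~\ref{thm:attestexists} writes each generator of $\tau(\b^{t})$ as $\phi((dc'z)^{1/p^{e}})$ for $z \in \b^{\lceil t(p^{e}-1)\rceil}$. Since $dz \in \b^{N+\lceil t(p^{e}-1)\rceil} = \a^{\lceil t(p^{e}-1)\rceil}\b^{N}$, absorbing the resulting $\b^{N}$-factor $y$ as a premultiplier into a new map $\phi_{y}(\cdot) := \phi(y^{1/p^{e}}\cdot)$ expresses each such generator as a sum of generators for $\tau(\a^{t})$.

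For part (ii), I will pick $c \in \tau(\a\b^{s}) \subseteq \tau(\b^{s})$ nonzero, and for $f \in \a$ and a generator $\phi((cz)^{1/p^{e}})$ of $\tau(\b^{s})$ with $z \in \b^{\lceil s(p^{e}-1)\rceil}$, the computation
\[
f\phi((cz)^{1/p^{e}}) = \phi((cf^{p^{e}}z)^{1/p^{e}}) \in \phi\bigl((c \a^{p^{e}-1}\b^{\lceil s(p^{e}-1)\rceil})^{1/p^{e}}\bigr) \subseteq \tau(\a\b^{s}),
\]
using $f^{p^{e}} \in \a^{p^{e}} \subseteq \a^{p^{e}-1}$, yields $\a\tau(\b^{s}) \subseteq \tau(\a\b^{s})$. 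When $\a = (f)$ is principal, the reverse inclusion follows by showing $f\tau(\b^{s})$ is itself $\phi$-compatible for $((f),\b^{s})$: the identity $f^{p^{e}-1} \cdot f = f^{p^{e}}$ lets one pull $f$ outside both the $p^{e}$-th root and the $R$-linear map $\phi$, reducing to compatibility of $\tau(\b^{s})$ by itself, and then minimality gives $\tau(f\b^{s}) \subseteq f\tau(\b^{s})$. The assertion $\a \subseteq \tau(\a)$ under strong $F$-regularity is the specialization $\b = R$, $s = 0$.

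For part (iii), one direction is an immediate instance of (ii) applied to $\a$ and the pair $(\a^{r-1}, \b^{s})$. For the reverse containment I plan to show that $\a\tau(\a^{r-1}\b^{s})$ is itself $\phi$-compatible for $(\a^{r}, \b^{s})$; minimality then forces $\tau(\a^{r}\b^{s}) \subseteq \a\tau(\a^{r-1}\b^{s})$. The key arithmetic step, and the main potential obstacle, is the containment chain
\[
\a^{r(p^{e}-1)} \cdot \a = \a^{r(p^{e}-1)+1} \subseteq \a^{rp^{e}} \subseteq \a^{[p^{e}]}\a^{(r-1)p^{e}} \subseteq \a^{[p^{e}]}\a^{(r-1)(p^{e}-1)},
\]
where the middle containment is Exercise~\ref{exerc:pulloffbracketpower}. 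Taking $p^{e}$-th roots, the factor $(\a^{[p^{e}]})^{1/p^{e}} = \a R^{1/p^{e}}$ can be pulled through the $R$-linear map $\phi$ as an outside $\a$-factor, while the remaining image lies in $\tau(\a^{r-1}\b^{s})$ by its own compatibility. The crucial point is that the extra ``$+1$'' in the exponent supplied by the $\a$-factor of $J = \a\tau(\a^{r-1}\b^{s})$ is exactly what allows Exercise~\ref{exerc:pulloffbracketpower} to apply---without it, the exponent $r(p^{e}-1)$ falls short of $rp^{e}$ by a full $r$, and the whole mechanism collapses.
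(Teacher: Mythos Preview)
Your arguments for parts (i) and (ii) are correct and essentially match what the paper has in mind; these parts are left as exercises there with no further hint.

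There is, however, a genuine arithmetic error in your proof of (iii). In the chain
\[
\a^{r(p^{e}-1)} \cdot \a = \a^{r(p^{e}-1)+1} \subseteq \a^{rp^{e}} \subseteq \a^{[p^{e}]}\a^{(r-1)p^{e}} \subseteq \a^{[p^{e}]}\a^{(r-1)(p^{e}-1)},
\]
the first containment is false whenever $r \geq 2$: since $r(p^{e}-1)+1 = rp^{e} - r + 1 < rp^{e}$, the ideal $\a^{r(p^{e}-1)+1}$ is \emph{larger} than $\a^{rp^{e}}$, not smaller. In other words, the single factor of $\a$ coming from $J = \a\,\tau(\a^{r-1}\b^{s})$ contributes only one extra power, but you are short by $r-1$ powers to reach $\a^{rp^{e}}$ and invoke Exercise~\ref{exerc:pulloffbracketpower}. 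Your closing remark that ``without it, the exponent $r(p^{e}-1)$ falls short of $rp^{e}$ by a full $r$'' is exactly right, but the ``$+1$'' does not close that gap except when $r=1$, which is just part (ii) again.

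The paper's hint repairs this by working with the generating formula rather than the minimality characterization. Choosing $0 \neq c \in \tau(\a^{r}\b^{s})$ and absorbing $\a^{r}$ into the test element (any nonzero $cd$ with $d \in \a^{r}$ is again a valid test element), one has
\[
\tau(\a^{r}\b^{s}) = \sum_{e \geq 0}\sum_{\phi} \phi\bigl((c\,\a^{r}\,\a^{r(p^{e}-1)}\b^{\lceil s(p^{e}-1)\rceil})^{1/p^{e}}\bigr),
\]
and now the key identity
\[
c\,\a^{r}\,\a^{r(p^{e}-1)} = c\,\a^{rp^{e}} = c\,\a^{[p^{e}]}\a^{(r-1)p^{e}} = \a^{[p^{e}]}\,c\,\a^{r-1}\,\a^{(r-1)(p^{e}-1)}
\]
(using Exercise~\ref{exerc:pulloffbracketpower} for the middle step) supplies the full $r$ extra powers. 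Pulling $\a^{[p^{e}]}$ through $\phi$ and $(\,\cdot\,)^{1/p^{e}}$ yields $\a$ times the analogous generating formula for $\tau(\a^{r-1}\b^{s})$, giving both containments at once.
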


\begin{proof}
The proof is left as an exercise to the reader.  As a hint for {\it (iii.)}, using Exercise~\ref{exerc:pulloffbracketpower}, we have
\[
c\a^{r} \a^{r(p^{e}-1)} = c \a^{rp^{e}} = c \a^{[p^{e}]}
\a^{(r-1)p^{e}} = \a^{[p^{e}]} c \a^{r-1} \a^{(r-1)(p^{e}-1)} \, \, .
\]
Then manipulate
\[
\tau(\a^{r}\b^{s}) = \sum_{e\geq 0} \sum_{\phi} \phi( ( c
  \a^{r}\a^{r(p^{e}-1)}\b^{\lceil t(p^{e}-1) \rceil}
  )^{1/p^{e}} ).
\]
\end{proof}

\begin{starexercise}  \cite[Theorem 2.1]{HaraYoshidaGeneralizationOfTightClosure} \cite[Theorem 4.1]{HaraTakagiOnAGeneralizationOfTestIdeals}
\label{exer:atskoda}
  If $\a$ has a reduction generated by at most $r$ elements, show that
  $\tau(\a^{t}) = \a \, \tau(\a^{t-1})$ for any $t \geq r$.  In particular,
  $\tau(\a^{h}) = \a^{h-r+1} \, \tau(\a^{r-1}) \subseteq \a^{h -r + 1}
  \subseteq \a$ for any
  integer $h \geq r$.
\end{starexercise}

\begin{exercise} \cite[Proposition 3.1]{HaraTakagiOnAGeneralizationOfTestIdeals}, \cf \cite{SchwedeCentersOfFPurity}
\label{exer:atlocalizes}
For any multiplicative system~$W$, prove that $W^{-1} \tau(R,
  \a^{t}) = \tau( W^{-1}R, (W^{-1}\a)^{t})$.
\end{exercise}

\begin{remark}
 While the above exercise was first stated as Proposition 3.1 in \cite{HaraTakagiOnAGeneralizationOfTestIdeals}, the proof provided therein is not sufficient.  In particular, one needs the existence of a ``test element'' that remains a test element after localization.  See Remark \ref{rem.CompletelyStableTestElement}.  However, once one uses Lemma \ref{lem.TestElementsExist} to construct such a test element, the proof in \cite{HaraTakagiOnAGeneralizationOfTestIdeals} goes through without substantial change.
\end{remark}

\subsection{$\ba^t$-tight closure}

\begin{definition}
\label{def.atTightClosure}
Suppose that $R$ is a ring $\ba \subseteq R$ is an ideal and $t \in \bQ_{\geq 0}$.  For any ideal $I$ of $R$, the \emph{$\a^{t}$-tight closure of $I$} (denoted $I^{*\a^{t}}$) is defined to be the set
\[
\{ \; z \in R \; | \; \exists \;  0 \neq c \in R\text{ such that } c
\a^{\lceil t(p^{e}-1)\rceil}z^{p^e} \in I^{[p^e]} \text{ for all } e
\geq 0 \;  \}.
\]
See Definition \ref{def.FrobeniusPowerOfIdeal} for the definition of $I^{[p^e]}$.
\end{definition}

\begin{exercise} \cite{HaraYoshidaGeneralizationOfTightClosure}
  Show that $I^{*\a^{t}}$ is an ideal containing $I$, and that $\tau(R,
  \a^{t}) \, I^{*\a^{t}} \subseteq I$ for all $I$.
\end{exercise}

\begin{exercise} \cite[Proposition 1.3(4)]{HaraYoshidaGeneralizationOfTightClosure}
  If $\a$ is a reduction\footnote{Again, an ideal $\a \subseteq \b$ such that $\overline{\a} = \overline{\b}$.} of $\b$, prove $I^{*\a^{t}} =
  I^{*\b^{t}}$ for all $t \in \Q_{\geq 0}$.
\end{exercise}

\begin{exercise}
 In the definition of $I^{*\a^t}$-tight closure, demonstrate that the containment $c \a^{\lceil t(p^{e}-1)\rceil}z^{p^e} \in I^{[p^e]} $ may be replaced by the containment $c \a^{\lceil tp^e \rceil}z^{p^e} \in I^{[p^e]}$ or the containment $c \a^{\lfloor t(p^{e}-1)\rfloor}z^{p^e} \in I^{[p^e]}$ without change to $I^{*\a^t}$.  In fact, the original definition of $I^{*\a^t}$ was the former of these, see \cite{HaraYoshidaGeneralizationOfTightClosure}.
\end{exercise}

\begin{exercise}  \cite{SchwedeSharpTestElements}
\label{ex.SharpTestElements}
 Show that $\tau_{\textnormal{fg}}(\ba^t) := \bigcap_{I \subseteq R} (I^{* \ba^t} : I)$ coincides with the set of $c \in R$ satisfying the following condition: whenever $z \in I^{*\ba^t}$, then $c \ba^{\lceil t(p^e - 1)\rceil} z^{p^e} \subseteq I^{[p^e]}$ for all $e \geq 0$.  \\
{\emph{Hint: }} Show that if $z \in I^{*\ba^t}$, then for every $e \geq 0$, $\ba^{\lceil t(p^e - 1) \rceil} z^{p^e} \subseteq (I^{[p^e]})^{*\ba^t}$.
\end{exercise}

\begin{remark}
Note when $\a = R$, we recover the original definition of tight
closure.  In general, while there are many similarities between tight closure and $\a^{t}$-tight
  closure, there are some very important differences as well.  In
  fact, $\a^{t}$-tight closure fails to be a closure
  operation at all: in many cases $(I^{*\a^{t}})^{*\a^{t}}$ is
  strictly larger than $I^{*\a^{t}}$ (in other words, the operation is not idempotent).  However, if $\ba$ is primary to a maximal ideal, A. Vraciu has developed an alternate version of $\ba^t$-tight closure which shares many aspects of the same theory but which is idempotent, see \cite{VraciuNewatTightClosure}.
\end{remark}

\begin{exercise}
  If $R$ is strongly $F$-regular and $\a$ is a non-zero principal ideal, show that $I^{*\a} = I : \a $ for all ideals $I$.  Use this to
  produce an example where $(I^{*\a^{t}})^{*\a^{t}}$ is strictly larger than $I^{*\a^{t}}$.
\end{exercise}

\subsection{Applications}

The test ideal $\tau(\ba^t)$ of a pair was introduced because of the connection between $\tau(R)$ and $\mJ(X_0 = \Spec R_0)$ discussed in Section \ref{sec.ConnectionsWithAG}.  In particular, working in characteristic zero, many of the primary applications of multiplier ideals involved pairs of the form $(X_0, \ba_0^t)$, or more generally $(X_0, \ba_0^t \bb_0^s)$.  One such formula is the subadditivity formula, see \cite{DemaillyEinLazSubadditivity, MustataMultiplierIdealofASum}.  In \cite{TakagiFormulasForMultiplierIdeals}, S. Takagi proved analogous results for the test ideal.  In fact, Takagi was able to prove subadditivity formula on singular varieties (for simplicity, we only handle the smooth case below).  Using reduction to characteristic $p > 0$, one can then obtain the same formula for multiplier ideals, thus obtaining a new result in characteristic zero algebraic geometry.  Very recently, E. Eisenstein obtained a geometric characteristic-zero proof of the results for singular varieties \cite{EisensteinGeneralizationsOfRestrictionFormula}.

\begin{starexercise} \cite[Proposition 2.1]{TakagiFormulasForMultiplierIdeals}
\label{exer:forsubadd}
  Given ideals $\a, \b \subseteq R$ and numbers $s, t \in \bR_{\geq 0}$, prove that
  $I^{*\a^{t}} \tau(\a^{t}\b^{s}) \subseteq I \, \tau(\b^{s})$ for any ideal
  $I \subseteq R$.
\end{starexercise}

\begin{theorem}
\label{thm.subadditivity}
  \cite[Theorem 2.7]{TakagiFormulasForMultiplierIdeals} (Subadditivity) Suppose $R$ is a regular
  local ring, and $\a, \b$ are ideals in $R$.  For any $s, t \in
  \Q_{\geq 0}$, we have
\[
\tau(\a^{t}\b^{s}) \subseteq \tau(\a^{t})\tau(\b^{s}) \, \, .
\]
If instead $R$ is of finite type over a perfect field $k$ but is not assumed to be regular, then still:
\[
 \mathfrak{J}(R/k) \tau(\a^{t}\b^{s}) \subseteq \tau(\a^{t})\tau(\b^{s}) \, \, ,
\]
where $\mathfrak{J}(R/k)$ is the Jacobian ideal of $R$ over $k$, see \cite[Section 16.6]{EisenbudCommAlgWithAView}.
\end{theorem}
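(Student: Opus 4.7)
The plan is to base everything on Exercise \ref{exer:forsubadd}: for any ideal $I$, one has $I^{*\a^{t}}\,\tau(\a^{t}\b^{s}) \subseteq I\,\tau(\b^{s})$. Setting $I = \tau(\a^{t})$ reduces the desired subadditivity inclusion to the claim $1 \in \tau(\a^{t})^{*\a^{t}}$ in the regular case (respectively $\mathfrak{J}(R/k) \subseteq \tau(\a^{t})^{*\a^{t}}$ in the general case). Unpacking the definition of $\a^{t}$-tight closure, this is equivalent to producing a single nonzero $c \in R$ such that $c\,\a^{\lceil t(p^{e}-1)\rceil} \subseteq \tau(\a^{t})^{[p^{e}]}$ (respectively $\mathfrak{J}(R/k)\cdot c\,\a^{\lceil t(p^{e}-1)\rceil} \subseteq \tau(\a^{t})^{[p^{e}]}$) for every $e \geq 0$.

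For the regular case, I would select a test element $c$ as in Lemma \ref{lem.TestElementsExist}. Theorem \ref{thm:attestexists} then guarantees
$$\tau(\a^{t}) \supseteq \phi\left((c\,\a^{\lceil t(p^{e}-1)\rceil})^{1/p^{e}}\right)$$
for every $e \geq 0$ and every $\phi \in \Hom_{R}(R^{1/p^{e}},R)$, simply because the right hand side is one of the summands defining $\tau(\a^{t})$. Regularity of $R$ now lets me invoke Exercise \ref{exer:testforbracketsinregrings}, which asserts that an element $x \in R$ lies in $\tau(\a^{t})^{[p^{e}]}$ precisely when $\phi(x^{1/p^{e}}) \in \tau(\a^{t})$ for all such $\phi$. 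Applying this criterion elementwise to $c\,\a^{\lceil t(p^{e}-1)\rceil}$ yields $c\,\a^{\lceil t(p^{e}-1)\rceil} \subseteq \tau(\a^{t})^{[p^{e}]}$ for every $e$, so $1 \in \tau(\a^{t})^{*\a^{t}}$, and the regular case follows.

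For the non-regular setting the genuine obstacle is that Exercise \ref{exer:testforbracketsinregrings} breaks down: knowing $\phi(x^{1/p^{e}}) \in J$ for all $\phi$ no longer forces $x \in J^{[p^{e}]}$. The remedy is a Jacobian-weighted bracket-power criterion. Writing $R = S/\mathfrak{b}$ with $S = k[x_{1},\ldots,x_{n}]$, I would compare $\Hom_{R}(R^{1/p^{e}},R)$ with $\Hom_{S}(S^{1/p^{e}},S)$ using Fedder's correspondence from Theorem \ref{thm.FedderCriterion}, and then invoke the Lipman--Sathaye theorem (or an explicit calculation arising from the Jacobian criterion for regularity) to produce the substitute inclusion
$$\mathfrak{J}(R/k)\cdot\{x \in R : \phi(x^{1/p^{e}}) \in J \text{ for all } \phi\} \subseteq J^{[p^{e}]}.$$
With this tool available, the same choice of $c$ from Lemma \ref{lem.TestElementsExist} gives $\mathfrak{J}(R/k)\cdot c\,\a^{\lceil t(p^{e}-1)\rceil} \subseteq \tau(\a^{t})^{[p^{e}]}$ for all $e$, so every element of $\mathfrak{J}(R/k)$ lies in $\tau(\a^{t})^{*\a^{t}}$, and plugging back into Exercise \ref{exer:forsubadd} finishes the argument.

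The main obstacle is therefore isolating and verifying this Jacobian-weighted bracket-power criterion with the correct compatibility with respect to the weighted ideals $\a^{\lceil t(p^{e}-1)\rceil}$; once that comparison between $R$ and the smooth ambient ring $S$ is in place, the rest of the proof is a short manipulation of test-ideal generators that proceeds uniformly in the regular and singular settings.
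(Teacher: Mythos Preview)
Your proof of the regular case is correct and essentially identical to the paper's: both apply Exercise~\ref{exer:forsubadd} with $I = \tau(\a^{t})$ to reduce to showing $1 \in \tau(\a^{t})^{*\a^{t}}$, and both verify this by combining the description of $\tau(\a^{t})$ in Theorem~\ref{thm:attestexists} with the bracket-power criterion of Exercise~\ref{exer:testforbracketsinregrings}. The only cosmetic difference is that the paper takes $c \in \tau(\a^{t}\b^{s}) \subseteq \tau(\a^{t})$ directly, whereas you produce $c$ via Lemma~\ref{lem.TestElementsExist}; either choice works in Theorem~\ref{thm:attestexists}.

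The paper explicitly proves only the regular statement, so your additional sketch of the singular case goes beyond what is written here. Your outline is in the right spirit and is indeed how Takagi's original argument proceeds, but the precise Jacobian-weighted replacement for Exercise~\ref{exer:testforbracketsinregrings} that you write down is not quite the statement one actually uses; what is needed is rather that elements of $\mathfrak{J}(R/k)$ serve as (completely stable) test elements, which comes from the Lipman--Sathaye theorem as exploited by Hochster--Huneke. With that in hand one shows $\mathfrak{J}(R/k) \subseteq \tau(\a^{t})^{*\a^{t}}$ and concludes as you indicate. So the overall strategy is sound, but the intermediate lemma should be reformulated in terms of test elements rather than as a raw bracket-power containment.
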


\begin{proof}

We only prove the first statement.
Fix $0 \neq c \in \tau(\a^{t}\b^{s}) \subseteq \tau(\a^{t}) \cap \tau(\b^{s})$.
From Exercise~\ref{exer:forsubadd}, we have
$\tau(\a^{t})^{*\a^{t}} \tau(\a^{t}\b^{s}) \subseteq \tau(\a^{t})
\tau(\b^{s})$.
To finish the proof, it suffices to show that $\tau(\a^{t})^{*\a^{t}}
= R$.  For all $\psi \in
\Hom_{R}(R^{1/p^{e}},R)$, we know $\psi((c \a^{\lceil t(p^{e}-1)\rceil})^{1/p^{e}}
  ) \subseteq \tau(\a^{t})$.  Thus, using
Exercise~\ref{exer:testforbracketsinregrings}, we see $c \a^{\lceil
  t(p^{e}-1) \rceil} \subseteq \tau(\a^{t})^{[p^{e}]}$ for all $e \geq
0$.  In particular, this shows $1 \in \tau(\a^{t})^{*\a^{t}}$ as desired.
\end{proof}

\begin{corollary} \cite[Theorem 0.1]{TakagiFormulasForMultiplierIdeals}
 Let $X_0 = \Spec R_0$ be a normal $\bQ$-Gorenstein variety over $\bC$ and let $\mathfrak{J}(R_0/\bC)$ be the Jacobian ideal sheaf of $X$ over $\bC$.  Let $\ba_0, \bb_0 \in \O_{X_0}$ be two non-zero ideal sheaves and fix real numbers $s, t \geq 0$.  Then
\[
  \mathfrak{J}(R_0/\bC) \mJ(X_0, \a_0^{t}\b_0^{s}) \subseteq \mJ(X_0, \a_0^{t})\mJ(X_0, \b_0^{s}) \, \, .
\]
\end{corollary}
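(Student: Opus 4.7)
The plan is to deduce this characteristic zero statement from its positive characteristic counterpart, Theorem \ref{thm.subadditivity}, by reduction modulo $p \gg 0$, following the template of the proof of Theorem \ref{thm.MultiplierIdealBecomesTestIdeal}. First, I would spread out the data $(X_0, \mathfrak{a}_0, \mathfrak{b}_0)$ together with a log resolution over a finitely generated $\mathbb{Z}$-subalgebra $A \subseteq \mathbb{C}$, as outlined in Subsection \ref{subsec.ReductionToCharP}, producing for each closed point of $\Spec A$ a characteristic $p > 0$ model $(X_p, \mathfrak{a}_p, \mathfrak{b}_p)$. Because the Jacobian ideal is computed from the vanishing of explicit minors of the Jacobian matrix associated to generators of the defining ideal of $R_0$, its formation commutes with reduction modulo $p$, so $\mathfrak{J}(R_0/\mathbb{C})_p = \mathfrak{J}(R_p/k(p))$ for all $p \gg 0$.

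Next, I would invoke the generalization of Theorem \ref{thm.MultiplierIdealBecomesTestIdeal} to pairs carrying ideal coefficients, \emph{i.e.}~the Hara--Yoshida correspondence: for all $p \gg 0$,
\[
\mathcal{J}(X_0, \mathfrak{a}_0^{t}\mathfrak{b}_0^{s})_p = \tau(R_p, \mathfrak{a}_p^{t}\mathfrak{b}_p^{s}), \qquad \mathcal{J}(X_0, \mathfrak{a}_0^{t})_p = \tau(R_p, \mathfrak{a}_p^{t}), \qquad \mathcal{J}(X_0, \mathfrak{b}_0^{s})_p = \tau(R_p, \mathfrak{b}_p^{s}).
\]
Since $X_0$ is $\mathbb{Q}$-Gorenstein, this identification uses the map--divisor correspondence \eqref{eq:divmaplbcorr} exactly as in the sketch of Theorem \ref{thm.MultiplierIdealBecomesTestIdeal}, combined with an elementary extension of Theorem \ref{thm:attestexists} to handle the product $\mathfrak{a}^{t}\mathfrak{b}^{s}$.

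Now I apply the second (singular) assertion of Theorem \ref{thm.subadditivity} directly to $R_p$:
\[
\mathfrak{J}(R_p/k(p))\,\tau(R_p, \mathfrak{a}_p^{t}\mathfrak{b}_p^{s}) \subseteq \tau(R_p, \mathfrak{a}_p^{t})\,\tau(R_p, \mathfrak{b}_p^{s}).
\]
Combining with the three identifications above yields the same inclusion for the reductions mod $p$ of the characteristic zero ideals involved in the corollary. Since an inclusion of finitely generated submodules of a fixed finitely generated module that holds after reduction mod $p$ for a dense set of primes must already hold in characteristic zero (this is the standard faithfully-flat principle used throughout Subsection~\ref{subsec.ReductionToCharP}), the desired inclusion follows.

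The main obstacle is not in the algebra, which is essentially formal once the dictionary is in place, but rather in the foundational work of verifying that \emph{all} the auxiliary data descend compatibly to characteristic $p \gg 0$: the variety $X_0$, the ideals $\mathfrak{a}_0, \mathfrak{b}_0$, a log resolution of $(X_0, \mathfrak{a}_0 \mathfrak{b}_0)$, and the Jacobian ideal must simultaneously reduce well, and the Hara--Yoshida identification of $\mathcal{J}$ with $\tau$ must be available in the version incorporating ideal coefficients and products. Once these points are granted --- each standard but nontrivial --- the corollary becomes a direct consequence of the characteristic $p$ subadditivity theorem.
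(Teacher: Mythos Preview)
Your proposal is correct and follows essentially the same approach as the paper: reduce to characteristic $p \gg 0$, identify each multiplier ideal with the corresponding test ideal via the Hara--Yoshida analog of Theorem~\ref{thm.MultiplierIdealBecomesTestIdeal}, and then apply the singular version of Theorem~\ref{thm.subadditivity}. The paper's own proof is terser but structurally identical, citing precisely the same ingredients you list.
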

\begin{proof}
 This follows via reduction to characteristic $p > 0$, see Subsection \ref{subsec.ReductionToCharP}.  Apply Theorem \ref{thm.subadditivity} and an analog of Theorem \ref{thm.MultiplierIdealBecomesTestIdeal} which can be found in \cite[Theorem 6.8]{HaraYoshidaGeneralizationOfTightClosure}.
\end{proof}

We now discuss another application of the subadditivity formula for test ideals:  the growth of symbolic and ordinary powers of an ideal.
Recall that the $n$-th symbolic power of an ideal $\a \subseteq R$ is
given by $\a^{(n)} = (\a^{n} W^{-1}R) \cap R$, where $W \subseteq R$ is
the compliment of the union of the associated primes of $\a$.  In the case that $\a$ is a prime ideal, $\a^{(n)}$ coincides with the $\ba$-primary component of $\a^n$.

\begin{theorem} \cite{HochsterHunekeComparisonOfSymbolic},  \cf \cite{EinLazSmithSymbolic, HochsterHunekeFineBehaviorOfSymbolicPowers, TakagiYoshidaGeneralizedTestIdealsAndSymbolicPowers}
\label{thm:symbpowers}
  Let $(R, \m)$ be a regular local ring with infinite residue field $R/\m$.  Let $\a$ be any non-zero
  ideal of $R$ and let $h$ be the maximal height of any associated
  prime ideal of $\a$.  Then
$\a^{(hn)} \subseteq \a^{n}$
for all integers $n \geq 1$.
\end{theorem}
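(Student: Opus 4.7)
The plan is to adapt the characteristic-zero Ein--Lazarsfeld--Smith argument to test ideals by establishing the chain
\[
\a^{(hn)} \ \subseteq\  \tau(\a^{(hn)}) \ \subseteq\  \tau\bigl((\a^{(hn)})^{1/n}\bigr)^{n} \ \subseteq\  \a^n.
\]
Since $R$ is regular and hence strongly $F$-regular, the first inclusion follows from Theorem~\ref{thm:atproperties}(ii). For the second, writing $\b = \a^{(hn)}$, I would iterate the subadditivity formula (Theorem~\ref{thm.subadditivity}) in its single-ideal form $\tau(\b^{s+t}) \subseteq \tau(\b^s) \tau(\b^t)$, applying the partition $1 = \tfrac{1}{n} + \cdots + \tfrac{1}{n}$ to conclude $\tau(\b) \subseteq \tau(\b^{1/n})^n$. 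Extracting this form from the two-ideal statement of Theorem~\ref{thm.subadditivity} is a routine ceiling-bookkeeping task, for instance via small perturbations of the exponents. Thus the proof reduces to showing $\tau(\b^{1/n}) \subseteq \a$.

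To prove this key containment, I would check it at each associated prime $\bp \in \operatorname{Ass}(R/\a)$, using the standard fact that $J \subseteq \a$ holds if and only if $JR_\bp \subseteq \a R_\bp$ for all such $\bp$ (the quotient $J/(J \cap \a)$ embeds into $R/\a$, forcing its associated primes to lie in $\operatorname{Ass}(R/\a)$). Fix such a $\bp$. Since symbolic and ordinary powers of $\a$ agree after localizing at an associated prime, we have $\a^{(hn)}R_\bp = \a^{hn}R_\bp$, and localization of test ideals (Exercise~\ref{exer:atlocalizes}) yields
\[
\tau(\b^{1/n})R_\bp \ =\  \tau_{R_\bp}\bigl((\a^{hn}R_\bp)^{1/n}\bigr) \ \subseteq\  \tau_{R_\bp}\bigl((\a R_\bp)^{h}\bigr),
\]
where the last inclusion comes from the exponent comparison $hn\lceil (p^e-1)/n \rceil \geq h(p^e-1)$ in the defining conditions for the two test ideals. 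Now $R_\bp$ is regular local of dimension $\height(\bp) \leq h$ with infinite residue field (inherited from $R/\m$: if $\bp \subsetneq \m$ then $R/\bp$ is a domain of positive Krull dimension, hence infinite, and its fraction field is too). By Northcott--Rees, $\a R_\bp$ admits a reduction generated by at most $h$ elements, so Skoda (Exercise~\ref{exer:atskoda}) gives $\tau_{R_\bp}((\a R_\bp)^{h}) \subseteq \a R_\bp$, completing the proof.

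The main technical obstacle is the ceiling manipulation in the subadditivity step, since Theorem~\ref{thm.subadditivity} as stated applies to products of possibly distinct ideals, and converting to the single-ideal form $\tau(\b^{s+t}) \subseteq \tau(\b^s)\tau(\b^t)$ requires reconciling $\lceil s(p^e-1)\rceil + \lceil t(p^e-1)\rceil$ with $\lceil (s+t)(p^e-1)\rceil$. The standard remedy is to exploit discreteness of $F$-jumping numbers to replace the exponents with nearby generic values for which both expressions coincide. Once this bookkeeping is in hand, the remaining pieces---localization at associated primes, inheritance of infinite residue fields, and Skoda in the localization---are straightforward applications of the tools already developed in this section.
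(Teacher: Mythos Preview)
Your proof is essentially identical to the paper's, following the same chain $\a^{(hn)} \subseteq \tau(\a^{(hn)}) \subseteq \tau((\a^{(hn)})^{1/n})^n$ via subadditivity, then localizing at associated primes of $\a$ and applying Skoda after invoking the reduction bound from the infinite residue field. One small remark: the ceiling obstacle you flag is real but handled more cheaply than by discreteness of $F$-jumping numbers---since $n\lceil(p^e-1)/n\rceil - (p^e-1) \leq n-1$ is bounded independently of $e$, the discrepancy between $\tau(\b)$ and the iterated two-ideal $\tau(\b^{1/n}\cdots\b^{1/n})$ is absorbed simply by replacing $c$ with $cx$ for any nonzero $x \in \b^{n-1}$ in the sum formula of Theorem~\ref{thm:attestexists} (the paper itself passes over this point in silence).
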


\begin{proof}
  Since $R$ is regular (and hence also strongly $F$-regular), using
  subadditivity (and Theorem~\ref{thm:atproperties} \textit{(iii.)}) we have
\[
\a^{(hn)} \subseteq
  \tau(\a^{(hn)})\subseteq \left(
    \tau((\a^{(hn)})^{1/n})\right)^{n}
\]
for all integers $n \geq 0$.
 Thus, it suffices to check
  that $ \tau((\a^{(hn)})^{1/n}) \subseteq \a^{(1)} = \a$, which may
  be done after localizing at each associated prime $\bp$ of $\a$.
  Since $R_{\bp}$ has dimension at most $h$ and infinite residue
  field, every nonzero ideal of $R_{\bp}$ has a reduction generated by no more than
  $h$ elements, \cite[Proposition 8.3.7, Corollary 8.3.9]{HunekeSwansonIntegralClosure}.  Thus, since $\a^{(hn)}R_{\bp} = \a^{hn}R_{\bp}$, we
  have (using Exercises~\ref{exer:atskoda} and \ref{exer:atlocalizes})
\[
\tau(R, (\a^{(hn)})^{1/n})R_{\bp} = \tau(R_{\bp},
(\a^{hn}R_{\bp})^{1/n}) = \tau(R_{\bp}, \a^{h}R_{\bp}) \subseteq
\a R_{\bp}
\]
and we conclude that $\a^{(hn)} \subseteq \a^{n}$ for all $n \geq 0$.
\end{proof}

\begin{starexercise}
  Modify the proof of Theorem~\ref{thm:symbpowers} to show the
  stronger statement
  $\a^{(kn)} \subseteq (\a^{(k-h+1)})^{n}$ for all $k \geq n$.
\end{starexercise}

\section{Generalizations of pairs: algebras of maps}
\label{sec.AlgebrasOfMaps}

In this short section we discuss a common generalization of the pairs $(R, \phi)$ and $(R, \ba^t)$ previously introduced.  In fact this generalization encompasses all studied types of pairs, triples, etc.  This idea has also been generalized to modules in \cite{BlickleAlgebras} although we will not work in that generality.

\begin{setting*}
In this section, unless otherwise specified, all rings are assumed to be integral domains essentially of finite type over a perfect field of characteristic $p > 0$.
\end{setting*}

Fix a ring\footnote{In fact, M. Blickle has shown that the theory below can be extended $F$-finite rings without which may or may not be reduced, \cite{BlickleAlgebras}.} $R$ and set $\sC^e = \Hom_R(R^{1/p^e}, R)$.  The test ideals and related notions such as $F$-purity and $F$-regularity are detected by looking for Frobenius splittings and similar special elements of $\sC^e$ for various $e \geq 0$.  Fundamentally, all pairs we have previously considered restrict the potential elements of $\sC^e$.  We abstract the idea of restricting potential elements of $\sC^e$ as follows.

Consider now the Abelian group
\[
\sC = \bigoplus_{e \geq 0} \sC_e = \bigoplus_{e \geq 0} \Hom_R(R^{1/p^e}, R)
\]
We can turn this into a non-commutative $\bN$-graded algebra by the following multiplication rule.  For $\alpha \in \sC_e$ and $\beta \in \sC_d$ we define
\[
\alpha \cdot \beta := \left( \alpha \circ \beta^{1/p^e} : R^{1/p^{d+e}} \to R \right) \in \sC_{d+e}.
\]
Explicitly, $\beta^{1/p^e}$ is the $R^{1/p^e}$-linear map $R^{1/p^{d+e}} \to R^{1/p^e}$ defined by the rule
\[
\beta^{1/p^e}\left( x^{1/p^{d+e}}\right) = \left(\beta(x^{1/p^d}) \right)^{1/p^e}.
\]
We then compose with a map $\alpha : R^{1/p^e} \to R$ to obtain $\alpha \cdot \beta$.

We call $\sC$ the \emph{complete algebra of maps on $R$}.  Notice that $\sC$ is not commutative (and $\sC_0$ is not even central).  Even more, this algebra is not generally finitely generated \cite{KatzmanANonFGAlgebraOfFrob}.

\begin{remark}
 Suppose for simplicity that $(R, \bm)$ is local.  The algebra $\sC$ is, up to some choices of isomorphism, Matlis dual to $\sF(E)$, the algebra of (iterated-)Frobenius actions on $E$, the injective hull of the residue field $R/\bm$. See \cite{LyubeznikSmithCommutationOfTestIdealWithLocalization}.
\end{remark}

\begin{definition} \cite[Section 3]{SchwedeTestIdealsInNonQGor}
An (algebra-)pair $(R, \sD)$ is the combined information of $R$ and a graded-subalgebra $\sD \subseteq \sC$ such that $\sD_0 = \sC_0 = \Hom_R(R, R) \cong R$.
\end{definition}

\begin{example}
 Suppose $R$ is a ring and $\ba \subseteq R$ is an ideal.  Then for any real number $t \geq 0$, we can construct the submodule
\[
\sD_e := (\ba^{\lceil t(p^e - 1) \rceil})^{1/p^e} \cdot \sC_e = \bigoplus_{e \geq 0} \left( (\ba^{\lceil t(p^e - 1) \rceil})^{1/p^e} \cdot \Hom_R(R^{1/p^e}, R) \right).
\]
One can verify that $\bigoplus_{e \geq 0} \sD_e$ forms a graded subalgebra which we denote by $\sC^{\ba^t}$.
\end{example}

\begin{exercise}[Different roundings and algebras]
 Prove that $\sC^{\ba^t}$ is indeed a graded subalgebra of $\sC$.  Give an example to show that $\bigoplus_{e \geq 0} (\ba^{\lfloor t(p^e - 1) \rfloor})^{1/p^e} \cdot \sC_e$ is not a graded subalgebra but $\bigoplus_{e \geq 0} (\ba^{\lceil tp^e \rceil})^{1/p^e} \cdot \sC_e$ is (although its first graded piece is not necessarily isomorphic to $R$).
\end{exercise}

\begin{example}
 If one fixes a homogeneous element $\phi \in \sC_{e}$ for $e > 0$, then one can form the algebra $\langle \phi \rangle = \bigoplus_{n \geq 0} \phi^{n} R^{1/p^{ne}} \subseteq \sC$ which is just the subalgebra generated by $\sC_0$ and $\phi$.
\end{example}

\begin{definition}
 Given a pair $(R, \sD)$, an ideal $I \subseteq R$ is called $\sD$-compatible if $\phi(I^{1/p^e}) \subseteq I$ for all $\phi \in \sD_e$ and all $e \geq 0$.
\end{definition}

\begin{definition} \cite[Definition 3.16]{SchwedeTestIdealsInNonQGor}
 The big test ideal $\tau(R, \sD)$ of a
pair $(R,\sD)$, if it exists, is the unique smallest ideal $J$ that satisfies two conditions:
\begin{itemize}
 \item[(1)] $J$ is $\sD$-compatible, and
 \item[(2)] $J \neq \{0\}$.
\end{itemize}
\end{definition}

\begin{exercise} \cite{SchwedeTestIdealsInNonQGor}
 Suppose that $\sD$ has a non-zero homogeneous element $\phi \in \sD_e$, $e > 0$.  Prove that $\tau(R, \sD)$ exists by using Lemma \ref{lem.TestElementsExist}.
\end{exercise}

\begin{exercise} \cite{SchwedeTestIdealsInNonQGor}
Suppose that $(R, \ba^t)$ is a pair as in Section \ref{sec.TestIdealIdealPairs}.  Prove that $\tau(R, \sC^{\ba^t}) = \tau(R, \ba^t)$.  Further show that if $\phi \in \sC_e$ is non-zero, then $\tau(R, \langle \phi \rangle) = \tau(R, \phi)$.
\end{exercise}

Algebras of maps appear very naturally.  For example, suppose that $R$ is a ring and $\sC_R$ is the complete algebra of maps on $R$.  Suppose that $I \subseteq R$ is a $\sC_R$-compatible ideal (such as the test ideal $\tau(R)$ or the splitting prime\footnote{In an $F$-pure local ring, the splitting prime is the unique largest $\sC_R$-compatible ideal not equal to the whole ring.} $\sP$ of \cite{AberbachEnescuStructureOfFPure}.).  One can then restrict each element of $\sC_R$ to $R/I$.  This yields an algebra of maps $\sD_{R/I} = \sC_R|_{R/I}$ which may or may not be equal to $\sC_{R/I}$.

\begin{exercise}\cite{SchwedeFAdjunction}
 With the notation above, suppose that $R$ is Gorenstein and local.  Prove that the algebra $\sD_{R/I}$ is equal to $\langle \phi \rangle$ for some $\phi \in \sC_{R/I}$.
\end{exercise}

One can define $F$-purity for algebras as well.

\begin{definition}
 Suppose that $(R, \sD)$ is a pair.  Then the pair is called \emph{sharply $F$-pure} (or sometimes just \emph{$F$-pure})  if there exists a homogenous element $\phi \in \sD_{e}$, $e > 0$ such that $\phi(R^{1/p^e}) = R$, \ie $\phi$ is surjective.
\end{definition}

\begin{exercise}\cite{SchwedeTestIdealsInNonQGor}, \cf \cite{BlickleBoeckleCartierModulesFiniteness, BlickleAlgebras}
 Prove that for a sharply $F$-pure pair $(R, \sD)$, $\tau(R, \sD)$ defines an $F$-pure subscheme and so in particular is a radical ideal.
\end{exercise}

The following theorem is an application of this approach of algebras of pairs.

\begin{theorem} \cite{SchwedeTestIdealsInNonQGor} (\cf \cite{SchwedeSmithLogFanoVsGloballyFRegular, deFernexHaconSingularitiesOnNormal})
Suppose that $(R, \sD)$ is a pair (\ie $\sD = \sC$).  Then \[
\tau(R, \sD) = \sum_{e > 0} \sum_{\phi \in \sD_e} \tau(R, \phi).
\]
If $R$ is additionally normal, then this also equals $\sum_{e > 0} \sum_{\phi \in \sD_e} \tau(R, \Delta_\phi)$.
\end{theorem}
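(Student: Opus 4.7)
The plan is to argue by double inclusion, writing $L := \sum_{e > 0, \phi \in \sD_e} \tau(R, \phi)$ for the right-hand side.

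The forward inclusion $L \subseteq \tau(R, \sD)$ is immediate from the definitions: since $\tau(R, \sD)$ is a non-zero $\sD$-compatible ideal, it is in particular $\phi$-compatible for every $\phi \in \sD_{e}$, so by the minimality of $\tau(R, \phi)$ as the smallest non-zero $\phi$-compatible ideal, $\tau(R, \phi) \subseteq \tau(R, \sD)$; summing over all such $\phi$ gives $L \subseteq \tau(R, \sD)$.

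For the reverse inclusion $\tau(R, \sD) \subseteq L$, I would verify that $L$ is itself a non-zero $\sD$-compatible ideal; the minimality of $\tau(R, \sD)$ would then force the containment. Non-vanishing of $L$ is clear: any non-zero $\phi_{0} \in \sD_{e_{0}}$ with $e_{0} > 0$ supplies a non-zero summand $\tau(R, \phi_{0})$, and such a $\phi_{0}$ must exist whenever $\tau(R, \sD)$ is non-zero. To check $\sD$-compatibility, fix $\psi \in \sD_{d}$; since $L$ is a sum, it suffices to show $\psi(\tau(R, \phi)^{1/p^{d}}) \subseteq L$ for each $\phi \in \sD_{e}$. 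Using Theorem~\ref{thm.TestIdealPairsExist}, I would pick a test element $c$ for $\phi$ so that $\tau(R, \phi) = \sum_{n \geq 0} \phi^{n}((cR)^{1/p^{ne}})$; applying $\psi$ and using the algebra multiplication $\psi \cdot \phi^{n} = \psi \circ (\phi^{n})^{1/p^{d}}$ yields
\[
\psi(\tau(R, \phi)^{1/p^{d}}) \;=\; \sum_{n \geq 0} (\psi \cdot \phi^{n})((cR)^{1/p^{d + ne}}).
\]
Since $\sD$ is a subalgebra of $\sC$, each $\psi \cdot \phi^{n}$ lies in $\sD_{d + ne}$, so $\tau(R, \psi \cdot \phi^{n})$ is itself a summand of $L$. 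The core step is to show that each individual term $(\psi \cdot \phi^{n})((cR)^{1/p^{d+ne}})$ is contained in $\tau(R, \psi \cdot \phi^{n})$. To see this, I would invoke Lemma~\ref{lem.TestElementsExist} applied to $\psi \cdot \phi^{n}$ with the non-zero element $c$ to produce a test element $c_{n}$ for $\psi \cdot \phi^{n}$ lying in the iterated $(\psi \cdot \phi^{n})$-orbit of $c$, then combine this with Exercise~\ref{ex.TestIdealOfPhiM} (which gives $\tau(R, \chi) = \tau(R, \chi^{m})$ for any positive integer $m$) and the iterative formula $\tau(R, \psi \cdot \phi^{n}) = \sum_{k} (\psi \cdot \phi^{n})^{k}((c_{n} R)^{1/p^{k(d+ne)}})$ to conclude the containment.

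The identification with $\sum_{e > 0} \sum_{\phi \in \sD_{e}} \tau(R, \Delta_{\phi})$ in the normal case is then a formal consequence of the bijection~\eqref{eq:divmaplbcorr} between non-zero $R$-linear maps $\phi \in \sD_{e}$ (taken up to pre-multiplication by units) and effective $\bQ$-divisors $\Delta$ satisfying $(1 - p^{e})(K_{X} + \Delta) \sim 0$, under which $\tau(R, \phi) = \tau(R, \Delta_{\phi})$ by construction.

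The main obstacle will be precisely the compatibility step: establishing that $(\psi \cdot \phi^{n})((cR)^{1/p^{d + ne}}) \subseteq \tau(R, \psi \cdot \phi^{n})$ despite the test element $c$ having been chosen for $\phi$ rather than for $\psi \cdot \phi^{n}$. Harmonizing these differing test elements via Lemma~\ref{lem.TestElementsExist}, together with the flexibility provided by Exercise~\ref{ex.TestIdealOfPhiM} of replacing a map by any positive power without changing its test ideal, is what makes the compatibility argument go through.
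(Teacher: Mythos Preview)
The paper does not actually prove this theorem; it only cites \cite{SchwedeTestIdealsInNonQGor}.  So there is no in-paper argument to compare against, and I will assess your proposal on its own merits.

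Your forward inclusion $L \subseteq \tau(R,\sD)$ is fine.  The problem is in the reverse inclusion, precisely at the step you yourself flag as the main obstacle: the claimed containment $(\psi\cdot\phi^{n})\bigl((cR)^{1/p^{d+ne}}\bigr)\subseteq \tau(R,\psi\cdot\phi^{n})$.  That containment would follow if $c\in\tau(R,\psi\cdot\phi^{n})$, but this can simply fail, and the tools you invoke do not repair it.  Lemma~\ref{lem.TestElementsExist} applied to $\chi:=\psi\cdot\phi^{n}$ only says that \emph{some} test element $c_{n}$ for $\chi$ lies in the $\chi$-orbit of $c$; this yields $\tau(R,\chi)\subseteq\sum_{k}\chi^{k}\bigl((cR)^{1/p^{k(d+ne)}}\bigr)$, which is the wrong direction for what you need.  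Exercise~\ref{ex.TestIdealOfPhiM} lets you replace $\chi$ by $\chi^{m}$, but that does not produce $c\in\tau(R,\chi)$ either.

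Here is a concrete counterexample to your term-by-term claim.  Take $R=\bF_{2}[x,y]$ with $\phi=\alpha$ and $\psi=\gamma$ from Exercise~\ref{ex.ChoiceOfPhiMatters}.  Since $R$ is regular and $\alpha$ generates $\Hom_{R}(R^{1/2},R)$ as an $R^{1/2}$-module, one may take $c=1$ as a test element for $\alpha$.  For $n=0$ your claim reads $\gamma(R^{1/2})\subseteq\tau(R,\gamma)$.  But $\gamma(R^{1/2})=R$ (since $\gamma(1)=1$), while $\tau(R,\gamma)=\langle xy\rangle\subsetneq R$.  So the containment fails outright.  Of course in this example $L=R$ anyway, because $\tau(R,\alpha)=R$ is already one of the summands; the theorem survives, but your mechanism of routing each individual term $(\psi\cdot\phi^{n})((cR)^{1/p^{d+ne}})$ into the particular summand $\tau(R,\psi\cdot\phi^{n})$ does not.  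A correct argument has to allow $\psi(\tau(R,\phi)^{1/p^{d}})$ to land in other summands of $L$, and arranging that is where the actual work lies.
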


For non-$\bQ$-Gorenstein normal varieties $X_0$ over $\bC$, de Fernex and Hacon have defined a multiplier ideal $\mJ(X_0)$ \cite{deFernexHaconSingularitiesOnNormal}.  Furthermore,
\[
\mJ(X_0) = \sum_{\begin{array}{c}  K_{X_0} + \Delta_0 \\ \text{is
      $\bQ$-Cartier} \end{array} } \mJ(X_0, \Delta_0).
\]
It is therefore natural to conjecture following.

\begin{conjecture}
Given a variety $X_0 = \Spec R_0$ in characteristic zero, we have
$\tau(R_p) = \mJ(X_0)_p$ for all $p \gg 0$ (here the subscript $p$
denotes reduction to characteristic $p>0$ as in Subsection \ref{subsec.ReductionToCharP}).
\end{conjecture}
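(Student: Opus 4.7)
The strategy is to match the two sums defining the conjecturally equal ideals. By the final theorem of the excerpt, for normal $R_p$ one has $\tau(R_p) = \sum_{e,\phi}\tau(R_p, \Delta_\phi)$ where $\phi$ ranges over homogeneous elements of $\sC_e$ and $\Delta_\phi$ is the effective $\bQ$-divisor associated to $\phi$ via the bijection~\eqref{eq:divmaplbcorr}, so that $(1-p^e)(K_{X_p}+\Delta_\phi)\sim 0$. Meanwhile $\mJ(X_0) = \sum_{\Delta_0}\mJ(X_0,\Delta_0)$, where $\Delta_0$ runs over effective $\bQ$-divisors making $K_{X_0}+\Delta_0$ $\bQ$-Cartier. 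The conjecture thus splits into two containments, each an indexing-set comparison between characteristic-zero boundaries and characteristic-$p$ maps.

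For $\mJ(X_0)_p \subseteq \tau(R_p)$: given an effective $\Delta_0$ of index $n$, I would take $p$ coprime to $n$ and pick $e$ with $p^e \equiv 1 \pmod n$, so that $(1-p^e)(K_{X_p}+\Delta_p) \sim 0$. The bijection~\eqref{eq:divmaplbcorr} then produces $\phi_{\Delta_p} \in \sC_e$, and Theorem~\ref{thm.MultiplierIdealBecomesTestIdeal} yields $\mJ(X_0,\Delta_0)_p = \tau(R_p,\phi_{\Delta_p}) \subseteq \tau(R_p)$; summing gives the containment. Some care is required to spread each summand $\Delta_0$ out over a common finitely generated $\bZ$-algebra $A \subset \bC$ and to ensure a uniform $p \gg 0$ works, but this is routine application of the reduction-to-characteristic-$p$ machinery of Subsection~\ref{subsec.ReductionToCharP}.

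The reverse containment is the heart of the matter and is what makes this a \emph{conjecture} rather than a theorem. One would want, for each $\phi \in \sC_e$, to exhibit an effective characteristic-zero $\Delta_0$ with $K_{X_0}+\Delta_0$ $\bQ$-Cartier whose reduction dominates $\Delta_\phi$ strongly enough to force $\tau(R_p,\Delta_\phi) \subseteq \mJ(X_0,\Delta_0)_p$. The obstruction is that the index-$(p^e-1)$ relation satisfied by $\Delta_\phi$ is an intrinsically characteristic-$p$ datum, and there is no a priori lift of $\Delta_\phi$ to a $\bQ$-Cartier boundary over $\bC$. My plan would be to \emph{approximate}: construct effective $\bQ$-Cartier boundaries $\Delta_0^{(k)}$ on $X_0$ whose characteristic-$p$ reductions $\Delta_p^{(k)}$ satisfy $\Delta_p^{(k)} \leq \Delta_\phi$ and approximate $\Delta_\phi$ coefficient-wise, then combine monotonicity of the test ideal in the boundary (Theorem~\ref{thm:atproperties}) with the identification $\mJ(X_0,\Delta_0^{(k)})_p = \tau(R_p,\phi_{\Delta_p^{(k)}})$ from Theorem~\ref{thm.MultiplierIdealBecomesTestIdeal} to pass to the limit.

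The main obstacle, and the reason the conjecture remains open, is the required density statement: that $\bQ$-Cartier boundaries on $X_0$ defined over $\bC$ are, after reduction mod $p$, cofinal among the characteristic-$p$ boundaries $\Delta_\phi$ arising from $\sC$. Without fine control on how the Weil divisor class group and its $\bQ$-Cartier subgroup behave under reduction modulo $p$ — in particular, without ruling out ``sporadic'' characteristic-$p$ boundaries of index $p^e-1$ unrelated to any characteristic-zero lift — this density seems inaccessible. In the special case where $R_0$ is itself $\bQ$-Gorenstein, a single canonical $\Delta_0 = 0$ handles everything and the conjecture reduces to the already-established Theorem~\ref{thm.MultiplierIdealBecomesTestIdeal}; the genuine content here is the non-$\bQ$-Gorenstein case, where approximation in the class group is unavoidable.
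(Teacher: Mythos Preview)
The statement is a \emph{conjecture}, and the paper does not prove it; immediately after stating it the paper only remarks that Blickle's work establishes the toric case. So there is no proof in the paper to compare against.

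Your proposal is not a proof and does not pretend to be one: you correctly identify that the containment $\mJ(X_0)_p \subseteq \tau(R_p)$ is accessible (Noetherianity reduces the de~Fernex--Hacon sum to finitely many boundaries $\Delta_0^{(1)},\dots,\Delta_0^{(N)}$, after which Theorem~\ref{thm.MultiplierIdealBecomesTestIdeal} applies uniformly for $p\gg 0$), while the reverse containment is the genuine obstruction. Your diagnosis of that obstruction---that boundaries $\Delta_\phi$ of index $p^e-1$ arising from arbitrary $\phi\in\sC_e$ need not lift to or be approximated by $\bQ$-Cartier boundaries in characteristic zero---is exactly the point; this is why the statement remains conjectural outside the $\bQ$-Gorenstein and toric cases.
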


Work of M. Blickle implies that this conjecture holds for toric rings, \cite{BlickleMultiplierIdealsAndModulesOnToric}.

\begin{remark}
 For other applications, it is likely important that one has a good measure of the finiteness properties of the given algebra $\sD$.  One very useful such property is the \emph{Gauge-Bounded} property introduced in \cite{BlickleAlgebras}, \cf \cite{AndersonElementaryLFunctions}.  This property is quite useful for proving questions related to the discreteness of $F$-jumping numbers, see Definition \ref{def.FJumpingNumber}.
\end{remark}

\section{Other measures of singularities in characteristic $p$}
\label{sec.OtherMeasures}

So far we have talked about test ideals, $F$-regularity and $F$-purity.  In this section we introduce several other ways to measure singularities in positive characteristic.  For a more complete list, please see the appendix.

First we introduce two other classes of singularities.  $F$-rationality and $F$-injectivity.

\subsection{$F$-rationality}

\begin{setting*}
In this subsection, unless otherwise specified, all rings are assumed to be integral domains essentially of finite type over a perfect field of characteristic $p > 0$.
\end{setting*}

\begin{definition}
Suppose that $R$ is a normal Cohen-Macaulay ring and that $\Phi_R : \omega_{R^{1/p}} = F_* \omega_R \to \omega_R$ is the canonical dual of Frobenius, see Theorem \ref{thm.TransformationRuleCanonicalModulesFiniteMaps}.  We say that $R$ has \emph{$F$-rational singularities} if there are no non-zero proper submodules $M \subseteq \omega_R$ such that $\Phi_R(F_* M) \subseteq M$.
\end{definition}

\begin{starexercise}
The hypothesis that $R$ is normal implied by the other hypotheses.  Prove it.
\end{starexercise}

\begin{exercise} \cite{FedderWatanabe}
Prove that a strongly $F$-regular ring is $F$-rational and that a Gorenstein $F$-rational ring is strongly $F$-regular.\\
\emph{Hint: } For the first part, use Theorem \ref{thm.StrongFRegEquiv} and apply the functor $\Hom_R(\blank, \omega_R)$.
\end{exercise}

\begin{starexercise}\cite{SmithFRatImpliesRat}
Recall that an integral domain $R_0$ of finite type over $\bC$ is said to have \emph{rational singularities} if for a resolution of singularities $\pi : \tld X_0 \to X_0 = \Spec R_0$, $\pi_* \omega_{\tld X_0} = \omega_{X_0}$ and $X_0$ is Cohen-Macaulay\footnote{This isn't normally the definition of rational singularities, but is instead a criterion often attributed to Kempf, \cite[Page 50]{KempfToroidalEmbeddings}.}.
Now suppose we are given an integral domain $R_0$ of finite type over $\bC$.  Show that if $R_p$ has $F$-rational singularities after reduction to characteristic $p \gg 0$ (see Subsection \ref{subsec.ReductionToCharP}), then $R_0$ has rational singularities in characteristic zero.
\end{starexercise}

\begin{remark}
The converse of the above exercise also holds, but the proof is more involved.  See \cite{HaraRatImpliesFRat} and \cite{MehtaSrinivasRatImpliesFRat}.
\end{remark}

We briefly mention the original definition of $F$-rationality.

\begin{theorem}
A local ring $(R, \bm)$ has $F$-rational singularities if and only if some ideal $I = (x_1, \dots, x_n)$ generated by a full system of parameters satisfies $I = I^*$ (here $I^*$ denotes the tight closure of $I$, see Section \ref{sec.TightClosure}).
\end{theorem}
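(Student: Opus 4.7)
The plan is to translate both conditions into a single statement about tight closure of zero in the top local cohomology module $H^{d}_{\bm}(R)$ (where $d = \dim R$), namely the vanishing $0^{*}_{H^{d}_{\bm}(R)} = 0$. The classical side is essentially a direct computation using the colimit presentation of $H^{d}_{\bm}(R)$; the $\omega_{R}$ side comes from Matlis duality together with Theorem \ref{thm.TransformationRuleCanonicalModulesFiniteMaps}, which identifies $\Phi_{R}\colon F_{*}\omega_{R}\to\omega_{R}$ as the Matlis dual of the natural Frobenius action $H^{d}_{\bm}(R)\to F_{*}H^{d}_{\bm}(R)$.

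First I would handle the tight-closure side. Writing $H^{d}_{\bm}(R) = \varinjlim_{t} R/(x_{1}^{t},\ldots,x_{d}^{t})$ for a fixed system of parameters $x_{1},\ldots,x_{d}$ and unwinding the definition of tight closure of zero in a module, a class represented by $z + (x_{1}^{t},\ldots,x_{d}^{t})$ lies in $0^{*}_{H^{d}_{\bm}(R)}$ if and only if $z\in (x_{1}^{t},\ldots,x_{d}^{t})^{*}$. Hence $0^{*}_{H^{d}_{\bm}(R)}=0$ is equivalent to every $t$-th-power parameter ideal being tightly closed. To bridge the gap between ``some parameter ideal is tightly closed'' and ``all are'', I would invoke Smith's result on parameter tight closure \cite{SmithTightClosureParameter} together with the colon-capturing inclusion $(x_{1}^{t},\ldots,x_{d}^{t})^{*}:(x_{1}\cdots x_{d})^{t-1}\subseteq (x_{1},\ldots,x_{d})^{*}$, which together force every $(x_{1}^{t},\ldots,x_{d}^{t})$ to be tightly closed once $(x_{1},\ldots,x_{d})$ is.

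For the $\omega_{R}$ side I would use Matlis duality (after passing to the $\bm$-adic completion $\widehat{R}$, where $H^{d}_{\bm}(R)$ and $\omega_{R}$ become Matlis duals of each other): Frobenius-stable submodules $V \subseteq H^{d}_{\bm}(R)$ correspond bijectively to $\Phi_{R}$-stable submodules $M \subseteq \omega_{R}$, with the correspondence reversing inclusions and swapping the zero and total submodules. Since $0^{*}_{H^{d}_{\bm}(R)}$ is itself Frobenius-stable, it matches some $\Phi_{R}$-stable submodule $M_{0}\subseteq \omega_{R}$; applying Lemma \ref{lem.TestElementsExist} to $\Phi_{R}$ I would show that $M_{0}$ is in fact the unique smallest non-zero $\Phi_{R}$-stable submodule of $\omega_{R}$. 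Thus $0^{*}_{H^{d}_{\bm}(R)}=0$ if and only if $M_{0}=\omega_{R}$, if and only if $\omega_{R}$ has no proper non-zero $\Phi_{R}$-stable submodule.

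The hard part will be making the Matlis-duality identifications precise with the correct direction of inclusion-reversal, so as to confirm that the vanishing $0^{*}_{H^{d}_{\bm}(R)}=0$ corresponds to the ``no proper non-zero submodule'' condition and not to some nearby but distinct statement; this is exactly where Theorem \ref{thm.TransformationRuleCanonicalModulesFiniteMaps} does the heavy lifting in pinning down the dual Frobenius structure. A secondary technical step, essentially routine but worth flagging, is descending the equivalence from $\widehat{R}$ back to $R$; this uses faithful flatness of completion and the compatibility of both $\Phi_{R}$ and tight closure of parameter ideals with $\bm$-adic completion.
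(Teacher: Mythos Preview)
The paper does not actually give a proof of this theorem; its entire argument is the sentence ``See \cite{FedderWatanabe} or \cite[Chapter 10]{BrunsHerzog}.''  So there is no proof to compare against, and your sketch is essentially the standard argument one finds in those references (passage to $H^{d}_{\bm}(R)$ via the direct limit over powers of a fixed system of parameters, Smith's identification of $0^{*}_{H^{d}_{\bm}(R)}$ as the maximal proper $F$-stable submodule, and Matlis duality to translate to $\Phi_{R}$-stable submodules of $\omega_{R}$).

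One genuine omission: you implicitly assume throughout that $R$ is Cohen-Macaulay---for instance, when asserting that $H^{d}_{\bm}(R)$ and $\omega_{R}$ are Matlis duals of one another.  For the forward direction this is fine, since the paper's definition of $F$-rational already includes Cohen-Macaulay (and normal) as hypotheses.  But for the converse, starting only from ``$(x_{1},\ldots,x_{d})^{*}=(x_{1},\ldots,x_{d})$ for some system of parameters,'' you must first deduce that $R$ is Cohen-Macaulay before you can invoke the duality machinery.  This is a separate standard step using colon capturing: one shows inductively that each partial parameter ideal $(x_{1},\ldots,x_{i})$ is tightly closed, hence that $x_{1},\ldots,x_{d}$ is a regular sequence.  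Normality then follows from the starexercise immediately after the definition of $F$-rational.  Without this step the Matlis-duality part of your argument does not get off the ground in the harder direction.
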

\begin{proof}
See \cite{FedderWatanabe} or \cite[Chapter 10]{BrunsHerzog}.
\end{proof}

\subsection{$F$-injectivity}
Now we move on to $F$-injectivity.

\begin{setting*}
In this subsection, unless otherwise specified, all rings are assumed to be reduced and essentially of finite type over a perfect field of characteristic $p > 0$.
\end{setting*}

\begin{definition}
A local ring $(R, \bm)$ is called \emph{$F$-injective} if for every integer $i > 0$, the natural map $H^i_{\bm}(R) \to H^i_{\bm}(R^{1/p})$ is injective.  An arbitrary ring $R$ is called $F$-injective if all of its localizations at prime ideals are $F$-injective.
\end{definition}

\begin{exercise}\label{ex.DualFInjectiveStatement}
Suppose that $R$ is Cohen-Macaulay and local.  Prove that $R$ is $F$-injective if and only if the canonical dual to Frobenius $F_* \omega_R \to \omega_R$ is surjective.
\end{exercise}

\begin{exercise} \cite{FedderFPureRat}
Prove that a Gorenstein ring is $F$-injective if and only if it is $F$-pure and that an $F$-pure ring is always $F$-injective.
\end{exercise}

\begin{exercise}  \cite{FedderWatanabe}
\label{ex.CMFinjectivityDeforms}
Suppose that $(R, \bm)$ is a Cohen-Macaulay local ring and $f \in R$ is a regular element.  Prove that if $R/f$ is $F$-injective, then $R$ is $F$-injective.\\
\emph{Hint: } Using the criterion in Exercise \ref{ex.DualFInjectiveStatement}, consider the diagram:
\[
\xymatrix{
0 \ar[r] & F_* \omega_R \ar[d] \ar[r]^{\times F_* f^{p}} & F_* \omega_R \ar[r] \ar[d] & F_* \omega_{R/f^p} \ar[d]_{\beta} \ar[r] & 0\\
0 \ar[r] & \omega_R \ar[r]^{\times f} & \omega_R \ar[r] & F_* \omega_{R/f} \ar[r] & 0
}
\]
Show that $\beta$ surjects, by considering the map $F_* \omega_{R/f} \to F_* \omega_{R/f^p}$.  Now take the cokernels of the left and middle vertical maps and use Nakayama's lemma.
\end{exercise}

\begin{remark}
It is an open question whether Exercise \ref{ex.CMFinjectivityDeforms} holds without the Cohen-Macaulay assumption.  It is however known that the analog of Exercise \ref{ex.CMFinjectivityDeforms} does not hold for $F$-pure rings in general, see \cite{FedderFPureRat} and also \cite{SinghDeformationOfFPurity}.  One can ask the same question for strongly $F$-regular and $F$-rational singularities, and the answers are no and yes respectively; see \cite{FedderWatanabe} and \cite{SinghFregularityDoesNotDeform}.
\end{remark}

\begin{starexercise} \cite{SchwedeCentersOfFPurity, KovacsSchwedeSmithLCImpliesDuBois, KovacsSchwedeDuBoisSurvey}
\label{ex.FInjectiveImpliesDuBois}
A normal Cohen-Macaulay ring $R_0$ of finite type over $\bC$ is called Du Bois if for some (equivalently any) log resolution of singularities $\pi : \tld X_0 \to X_0 = \Spec R_0$ with simple normal crossings exceptional divisor $E_0$, $\pi_* \omega_{\tld X_0}(E_0) \cong \omega_{X_0}$.  Prove that if $X_p$ has $F$-injective singularities after reduction to characteristic $p \gg 0$, then $X_0$ has Du Bois singularities in characteristic zero.  \\
{\emph{Hint: }} Consider the diagram  \[
  \xymatrix{ (\pi_* \omega_{\tld{X_p}}(p^e E_p))^{1/p^e} \ar[d]_{\rho} \ar[r] & \pi_*
    \omega_{\tld{X_p}}(E_p) \ar[d]^{\beta} \\
    \omega_{X_p}^{1/p^e} \ar[r]^{\Phi} & \omega_{X_p} \\
  }
  \]
  where the horizontal arrows are the dual of the Frobenius map (see Subsection \ref{subsec.TestIdealsInGorRings}).
\end{starexercise}

\begin{remark}
The converse implication of the above exercise is false as stated, in fact that singularity $\bF_p[x,y,z]/(x^3 + y^3 + z^3)$ is $F$-injective if and only if $p = 1 \mod 3$.  However, it is an important open question whether a Du Bois singularity is $F$-injective after reduction to characteristic $p > 0$ for infinitely many primes $p$ (technically, a Zariski-dense set of primes), this condition is called \emph{dense $F$-injective type} whereas the original condition is called \emph{open $F$-injective type}.  Likewise, it is an open question whether a log canonical singularity is $F$-pure after reduction to characteristic $p > 0$ for infinitely many primes $p$.
\end{remark}

\begin{starexercise} \cite{HaraWatanabeFRegFPure}
\label{ex.FpureImpliesLC}
A log $\bQ$-Gorenstein pair $(X, \Delta)$ of any characteristic is called \emph{log canonical} if for every proper birational map $\pi : \tld X \to X$ with $\tld X$ normal, all the coefficients of $K_{\tld X} - \pi^* (K_{X} + \Delta)$ are $\geq -1$.  This can be checked on a single log resolution of $(X, \Delta)$, if it exists (it does in characteristic zero).  For more about log canonical singularities, see \cite{KollarSingularitiesOfPairs} and \cite{KollarMori}.

Use the method of the above exercise to show the following.
 If $X = \Spec R$ is a ring of characteristic $p$ and $\phi : R^{1/p^e} \to R$ is a divisor corresponding to $\Delta$ as in (\ref{eq:divmaplbcorr}), then if $\phi$ is surjective (\ie if $(R, \phi)$ is $F$-pure) show that $(X, \Delta)$ is log canonical.  Conclude by showing that log $\bQ$-Gorenstein pairs $(X_0, \Delta_0)$ over $\bC$ of dense $F$-pure type (\ie such that $\phi_{\Delta_p}$ is surjective for infinitely many $p \gg 0$) are always log canonical.
\end{starexercise}

\subsection{$F$-signature and $F$-splitting ratio}
Recall that a ring $R$ was said to be
$F$-pure if the all of Frobenius inclusions $R \to R^{1/p^{e}}$ split as a maps
of $R$-modules.  In this section, we consider local numerical
invariants -- the $F$-signature and \mbox{$F$-splitting ratio} -- which characterize the
asymptotic growth of the number of splittings of the iterates of
Frobenius.

\begin{setting*}
In this subsection, unless otherwise specified, all rings are assumed to be \emph{local} integral domains essentially of finite type over a perfect field of characteristic $p > 0$.
\end{setting*}

\begin{definition}
 Let $(R, \m, k)$ be a local ring.
 For each $e \in \N$, the \emph{$e$-th Frobenius splitting
   ($F$-splitting) number of $R$} is
the maximal rank $a_{e} = a_{e}(R)$ of a free $R$-module appearing in a direct sum
decomposition of $R^{1/p^{e}}$.  In other words, we may write $R^{1/p^{e}} =
R^{ \oplus a_{e}} \oplus M_{e}$ where $M_{e}$ has no free
direct summands.
\end{definition}

\begin{exercise}
  Show that $R$ is $F$-pure if and only if $a_{e} > 0$ for some $e \in
  \N$, in which case $a_{e} >0$ for all $e \in \N$.
\end{exercise}

\begin{exercise} \cite{AberbachLeuschke}
  For any prime ideal $\mathfrak{p}$ in $R$, show that
  $a_{e}(R_{\mathfrak{p}}) \geq a_{e}(R)$.
\end{exercise}

Using the following Proposition, it is easy to see that the
$F$-splitting numbers are independent of the chosen direct sum decomposition
of $R^{1/p^{e}}$.

\begin{proposition} \cite{AberbachEnescuStructureOfFPure}
\label{prop:Ie}
Assume that $k = k^{p}$ is perfect.
Consider the sets
\[
I_{e} := \{ r \in R \, | \, \phi(r^{1/p^{e}}) \in \m \mbox{ for all }
\phi \in \Hom_{R}(R^{1/p^{e}}, R) \} \; .
\]  Then $I_{e}$ is an ideal in
$R$ with $\length_{R}(R/I_{e}) = a_{e}$.
\end{proposition}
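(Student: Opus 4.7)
The plan is to verify that $I_e$ is an ideal by direct manipulation, then identify it as the preimage under the ring isomorphism $\alpha : R \to R^{1/p^e}$, $r \mapsto r^{1/p^e}$, of a specific submodule $N \subseteq R^{1/p^e}$, and finally use that $\alpha$ transports composition series to relate the length of $R/I_e$ to that of $R^{1/p^e}/N$, which can be computed directly.  For the ideal claim, closure under addition follows from the characteristic-$p$ identity $(r+s)^{1/p^e} = r^{1/p^e}+s^{1/p^e}$ in $R^{1/p^e}$, so every $\phi \in \Hom_R(R^{1/p^e},R)$ satisfies $\phi((r+s)^{1/p^e}) = \phi(r^{1/p^e})+\phi(s^{1/p^e}) \in \m$ whenever $r,s \in I_e$.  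For closure under multiplication by $t \in R$, the element $t^{1/p^e} \in R^{1/p^e}$ commutes with the image of $R$, so $\psi(x) := \phi(t^{1/p^e}x)$ is again $R$-linear; evaluating at $r^{1/p^e}$ gives $\phi((tr)^{1/p^e}) = \psi(r^{1/p^e}) \in \m$ whenever $r \in I_e$.

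For the length computation, fix a decomposition $R^{1/p^e} = R^{a_e} \oplus M_e$ as in the definition of $a_e$.  Because $R$ is local, $M_e$ has no free summand if and only if $\phi(M_e) \subseteq \m$ for every $\phi \in \Hom_R(M_e,R)$: any surjection $M_e \twoheadrightarrow R$ would split since $R$ is projective.  Let $\pi_1,\ldots,\pi_{a_e}$ be the coordinate projections onto $R^{a_e}$, extended by zero on $M_e$, and write $r^{1/p^e} = \sum_i u_i e_i + v$ with $u_i \in R$ and $v \in M_e$.  Decomposing an arbitrary $\phi \in \Hom_R(R^{1/p^e},R)$ into its restrictions to $R^{a_e}$ and $M_e$ yields $\phi(r^{1/p^e}) = \sum_i \phi(e_i)u_i + \phi(v)$ with $\phi(v) \in \m$ automatic.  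Hence $r \in I_e$ if and only if each $u_i = \pi_i(r^{1/p^e})$ lies in $\m$, i.e., iff $r^{1/p^e} \in N := \m R^{a_e} \oplus M_e$, and as $R$-modules we have $R^{1/p^e}/N \cong R^{a_e}/\m R^{a_e} \cong k^{a_e}$, so $\length_R(R^{1/p^e}/N) = a_e$.

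The key remaining observation is that $\alpha$ sends $I_e$ bijectively onto $N$: the above gives $\alpha(I_e) \subseteq N$, and conversely every $y \in N$ has the form $r^{1/p^e}$ with $r = y^{p^e} \in I_e$.  Hence $N$ is an ideal of $R^{1/p^e}$ and $\alpha$ descends to a ring isomorphism $R/I_e \cong R^{1/p^e}/N$; transporting composition series across $\alpha$ yields $\length_R(R/I_e) = \length_{R^{1/p^e}}(R^{1/p^e}/N)$.  To match this with the already-computed value $\length_R(R^{1/p^e}/N) = a_e$, it suffices to show the simple $R^{1/p^e}$-module $R^{1/p^e}/\m^{1/p^e}$ remains simple as an $R$-module via $R \subseteq R^{1/p^e}$.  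Since $\m$ is prime, $R \cap \m^{1/p^e} = \m$, so the $R$-action on $R^{1/p^e}/\m^{1/p^e}$ factors through $k$; the resulting field embedding $k \hookrightarrow R^{1/p^e}/\m^{1/p^e}$ becomes the Frobenius $c \mapsto c^{p^e}$ under the canonical identification $R^{1/p^e}/\m^{1/p^e} \cong k$ induced by $\alpha$, and it is an isomorphism precisely because $k = k^p$.  The main obstacle is exactly that $\alpha$ is not $R$-linear, so the $R$-module structures on $R/I_e$ and on $R^{1/p^e}/N \cong k^{a_e}$ genuinely differ (for instance $\m \cdot (R/I_e) \neq 0$ in general); the detour through $R^{1/p^e}$-module length, in which the perfectness hypothesis is indispensable, is what reconciles the two counts.
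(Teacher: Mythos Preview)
Your argument is correct.  The paper does not actually prove this proposition; it immediately follows the statement with a starred exercise asking the reader to check that $I_e$ is an ideal and then prove the result, so there is nothing to compare against in the paper itself.

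Your approach is the natural one, and in fact matches the proof one finds in the original reference of Aberbach--Enescu: identify the image of $I_e$ under $r \mapsto r^{1/p^e}$ with $\m R^{\oplus a_e} \oplus M_e$, using that $M_e$ has no free summand precisely when every $R$-linear map $M_e \to R$ lands in $\m$.  The one genuinely delicate point---which you handle carefully---is that the bijection $R/I_e \to R^{1/p^e}/N$ is a ring isomorphism but not an $R$-module map, so one cannot immediately conclude $\length_R(R/I_e)=\length_R(R^{1/p^e}/N)$.  Your fix, passing through $\length_{R^{1/p^e}}(R^{1/p^e}/N)$ and using $[k^{1/p^e}:k]=1$ to equate $R$-length and $R^{1/p^e}$-length for finite-length $R^{1/p^e}$-modules, is exactly right and is where the perfectness hypothesis enters.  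An alternative phrasing some authors prefer is to observe that length is invariant under the ring automorphism $F^e$ of the category of $R$-modules (since it takes simple objects to simple objects when $k=k^p$), but this amounts to the same computation.
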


\begin{starexercise} \cite{AberbachEnescuStructureOfFPure}
  Check that $I_{e}$ is, in fact, an ideal.  Then prove the proposition.
\end{starexercise}


\begin{theorem} \cite{TuckerFSignatureExists}
\label{thm:sigexists}
   Let $(R, \m, k)$ be a local ring of dimension $d$.  Assume $k = k^{p}$
   is perfect.  Then the limit
\[
s(R) := \lim_{e \to \infty} \frac{a_{e}}{p^{ed}}
\]
exists and is called the $F$-signature of $R$.
\end{theorem}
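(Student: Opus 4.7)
The plan is to reduce the problem to showing that the sequence $(a_{e}/p^{ed})_{e \geq 0}$ is Cauchy. The starting point is Proposition \ref{prop:Ie}, which rewrites $a_{e} = \length_{R}(R/I_{e})$. For boundedness from above, I would first check that $\m^{[p^{e}]} \subseteq I_{e}$: indeed, if $r = \sum s_{i}m_{i}^{p^{e}}$ with $m_{i} \in \m$, then $r^{1/p^{e}} = \sum m_{i}\,s_{i}^{1/p^{e}}$ in $R^{1/p^{e}}$ (using that Frobenius is additive), so any $R$-linear $\phi \in \Hom_{R}(R^{1/p^{e}}, R)$ satisfies $\phi(r^{1/p^{e}}) = \sum m_{i}\phi(s_{i}^{1/p^{e}}) \in \m$. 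Hence $a_{e} \leq \length_{R}(R/\m^{[p^{e}]})$, and the existence of the Hilbert-Kunz multiplicity (Monsky's theorem) yields a constant $C$ with $a_{e}/p^{ed} \leq C$ for all $e$.

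The main technical step -- and the genuine obstacle -- is establishing an estimate of the form
\[
|a_{e+1} - p^{d}\cdot a_{e}| \leq C'\cdot p^{e(d-1)}
\]
for some constant $C'$ independent of $e$. The idea is that by Kunz's theorem (Theorem \ref{thm.KunzCharacterizationsOfRegular}), on the regular locus $U \subseteq \Spec R$, which has complement of dimension at most $d-1$, the $R$-module $R^{1/p^{e}}$ is \emph{locally free} of rank $p^{ed}$. So heuristically, up to a module supported on the $(d-1)$-dimensional singular locus, $R^{1/p^{e}}$ behaves like $R^{\oplus p^{ed}}$, and taking $p$-th roots of this ``model'' multiplies the free-summand count exactly by $p^{d}$. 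The defect between $R^{1/p^{e}}$ and the model is measured by a module whose length (after tensoring appropriately) grows only as $O(p^{e(d-1)})$ -- precisely the Hilbert-Kunz-type bound one gets by restricting to a codimension-one subscheme. Converting this heuristic into a rigorous length estimate, using Krull-Schmidt in conjunction with careful tracking of free summands under the decomposition $R^{1/p^{e+1}} = (R^{1/p^{e}})^{1/p}$, is the core content of Tucker's argument and where the real work lies.

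Granted the key inequality, dividing through by $p^{(e+1)d}$ immediately yields
\[
\left|\frac{a_{e+1}}{p^{(e+1)d}} - \frac{a_{e}}{p^{ed}}\right| \leq \frac{C'}{p^{e+1}}\, ,
\]
whose right-hand side is summable over $e$. Thus $(a_{e}/p^{ed})$ is a Cauchy sequence of non-negative reals bounded above by $C$, and so converges to a finite limit $s(R) \in [0,C]$, completing the proof.
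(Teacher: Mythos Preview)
Your outline is reasonable in spirit but explicitly defers the only nontrivial step: you state the desired inequality $|a_{e+1} - p^{d}a_{e}| \leq C' p^{e(d-1)}$ and then say converting the heuristic into a rigorous length estimate ``is the core content of Tucker's argument and where the real work lies.'' That is an accurate assessment, but it means your proposal is a sketch rather than a proof.

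The paper organizes things differently, and the comparison is instructive. Rather than aiming directly at a Cauchy estimate for $a_{e}$, the paper isolates the hard analysis in a general lemma (Lemma~\ref{lem:idealsand limits}): for \emph{any} sequence of $\m$-primary ideals $\{J_{e}\}$ with $J_{e}^{[p]} \subseteq J_{e+1}$ and $\m^{[p^{e}]} \subseteq J_{e}$, the limit $\lim_{e} p^{-ed}\length_{R}(R/J_{e})$ exists. Granting that lemma, the existence of $s(R)$ reduces to checking that the ideals $I_{e}$ of Proposition~\ref{prop:Ie} satisfy the two containments. You have verified $\m^{[p^{e}]} \subseteq I_{e}$, but you do not mention the other containment $I_{e}^{[p]} \subseteq I_{e+1}$, which is the crucial algebraic input in the paper's reduction (and is an easy exercise: if $\phi(r^{1/p^{e}}) \in \m$ for all $\phi$, compose with the maps $R^{1/p^{e+1}} \to R^{1/p^{e}}$). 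The virtue of the paper's packaging is that the delicate length estimate --- essentially your ``$O(p^{e(d-1)})$ defect on the singular locus'' heuristic --- is proved once for arbitrary such sequences, and then applied uniformly both here and later for Hilbert--Kunz multiplicity. Your direct approach would work but reproves a special case of that lemma without the abstraction.
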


The $F$-signature was first explicitly\footnote{Implicitly, the
  $F$-signature first appeared in \cite{SmithVanDenBerghSimplicityOfDiff}.} defined by C. Huneke and G. Leuschke
\cite{HunekeLeuschkeTwoTheoremsAboutMaximal} and captures delicate information about the
singularities of $R$.
For example, the $F$-signature of
the two-dimensional rational double-points\footnote{Here it is
  necessary to assume that $p \geq 7$ to avoid pathologies in low characteristic.} ($A_{n}$), ($D_{n}$),
($E_{6}$), ($E_{7}$), ($E_{8}$) is the reciprocal of the order of the group
defining the quotient singularity \cite[Example
18]{HunekeLeuschkeTwoTheoremsAboutMaximal}.  However, a positive answer to a conjecture of Monsky implies the existence of local rings with irrational $F$-signature, see \cite{MonskyRationalityOfHKMultCounterExampleLikely}.

The heart of the proof of Theorem~\ref{thm:sigexists} lies in the
following technical lemma.

\begin{lemma} \cite{TuckerFSignatureExists}
\label{lem:idealsand limits}
  Let $(R, \m, k)$ be a local ring of dimension $d$.  If $\{J_{e}\}_{e
     \in \N}$ is any sequence of $\m$-primary ideals such that
   $J_{e}^{[p]} \subseteq J_{e+1}$ and $\m^{[p^{e}]} \subseteq J_{e}$
   for all $e$, then $\lim_{e \to \infty} \frac{1}{p^{ed}}
   \length_{R}(R/J_{e})$ exists.
\end{lemma}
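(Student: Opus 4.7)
The plan is to normalize, set $b_e := \length_R(R/J_e)/p^{ed}$, and show $\{b_e\}$ converges by verifying it is Cauchy. Boundedness is immediate from $\m^{[p^e]} \subseteq J_e$: indeed $\length_R(R/J_e) \leq \length_R(R/\m^{[p^e]})$, and Monsky's theorem asserting the existence of the Hilbert-Kunz multiplicity $e_{HK}(\m) = \lim_e \length_R(R/\m^{[p^e]})/p^{ed}$ gives $\length_R(R/\m^{[p^e]}) = O(p^{ed})$, so the $b_e$ are uniformly bounded.

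The first real step is to exploit the hypothesis $J_e^{[p]} \subseteq J_{e+1}$ iteratively: it gives $J_N^{[p^{e-N}]} \subseteq J_e$ for every $e \geq N$. Combined with the existence of the Hilbert-Kunz multiplicity of each $\m$-primary ideal $J_N$, this yields
$$\limsup_{e \to \infty} b_e \;\leq\; \lim_{e \to \infty} \frac{\length_R(R/J_N^{[p^{e-N}]})}{p^{ed}} \;=\; \frac{e_{HK}(J_N)}{p^{Nd}}$$
for every $N$. Writing $E_N$ for the right-hand side, the same iteration applied to $J_N^{[p^{n+1}]} \subseteq J_{N+1}^{[p^n]}$, upon dividing by $p^{nd}$ and letting $n \to \infty$, gives $e_{HK}(J_{N+1}) \leq p^d \cdot e_{HK}(J_N)$, equivalently $E_{N+1} \leq E_N$. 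So $\{E_N\}$ is non-increasing and bounded below, hence converges to some limit $L$, and $\limsup_e b_e \leq L$.

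The main obstacle is the matching inequality $\liminf_e b_e \geq L$. My plan is to establish a uniform quantitative Hilbert-Kunz error estimate: for every $\m$-primary ideal $I$ with $\m^{[p^e]} \subseteq I$,
$$\length_R(R/I^{[p]}) \;\leq\; p^d \length_R(R/I) + C \cdot p^{(e+1)(d-1)},$$
with $C$ depending only on $R$. Such an estimate is in the spirit of the quantitative Hilbert-Kunz bounds of Monsky, Dutta, Hochster-Huneke, and Kurano; Kunz's theorem makes it exact (with $C = 0$) when $R$ is regular, so the non-regular case is where the real work lies, and it rests on the finite generation of $R^{1/p}$ together with control on $\mu(I)$ afforded by $I \supseteq \m^{[p^e]}$. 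Granting the bound, $J_{e+1} \supseteq J_e^{[p]}$ gives $\length_R(R/J_{e+1}) \leq p^d \length_R(R/J_e) + C \cdot p^{(e+1)(d-1)}$, i.e.\ $b_{e+1} \leq b_e + C/p^{e+1}$. Then $f_e := b_e + C \sum_{j \geq e+1} p^{-j}$ is non-increasing and bounded below, hence convergent, and since the tail correction vanishes so does $b_e - f_e$, yielding convergence of $\{b_e\}$. The technical core, and the hardest step, is verifying the uniform Hilbert-Kunz error bound with the correct $(e+1)(d-1)$ exponent in $p$ as the ideal $I$ ranges over all $\m$-primary ideals containing $\m^{[p^e]}$.
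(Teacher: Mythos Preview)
The paper does not give its own proof of this lemma; it simply cites \cite{TuckerFSignatureExists}. Your proposal is essentially the argument in that reference, and the final paragraph is correct and complete on its own: once you have $b_{e+1} \leq b_e + C/p^{e+1}$, the sequence $f_e := b_e + C\sum_{j>e}p^{-j}$ is non-increasing and bounded below, hence convergent, and $b_e$ converges with it. Note that everything you do with the $E_N$'s and the putative limit $L$ in the first half is therefore superfluous---you never need $\liminf b_e \geq L$ once the one-step inequality is in hand.

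Two small comments on the technical core. First, your attribution of the error term to ``control on $\mu(I)$'' is not quite the mechanism in Tucker's argument: one takes an $R$-linear map $R^{\oplus p^d} \to R^{1/p}$ that is an isomorphism at the generic point, so its cokernel $T$ has dimension $\leq d-1$; then $\length_R(R/I^{[p]}) = \length_R(R^{1/p}/IR^{1/p}) \leq p^d\length_R(R/I) + \length_R(T/IT)$, and the hypothesis $\m^{[p^e]} \subseteq I$ is used only to bound $\length_R(T/IT) \leq \length_R(T/\m^{[p^e]}T) = O(p^{e(d-1)})$ via the Hilbert--Kunz estimate for the lower-dimensional module $T$. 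Second, this actually gives the slightly sharper exponent $e(d-1)$ rather than your $(e+1)(d-1)$, though either suffices.
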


\begin{exercise} \cite{TuckerFSignatureExists}
  Show that the ideals $I_{e}$ from Proposition~\ref{prop:Ie} satisfy
  $I_{e}^{[p]} \subseteq I_{e+1}$ and $\m^{[p^{e}]} \subseteq I_{e}$,
  and use the previous lemma to conclude the existence of the
  $F$-signature limit.
\end{exercise}

It is quite natural to expect the
\mbox{$F$-signature} to measure the singularities of $R$.  Indeed,
when $R$ is regular, $R^{1/p^{e}}$ itself is a free $R$-module of rank
$p^{ed}$.  Thus, for general $R$, the \mbox{$F$-signature} asymptotically
compares the number of direct summands of $R^{1/p^{e}}$ isomorphic to $R$ with
the number of such summands one would expect from a regular local ring
of the same dimension.

\begin{theorem}
  \cite[Theorem 0.2]{AberbachLeuschke}
Let $(R, \m, k)$ be a local ring of dimension $d$.  Assume $k = k^{p}$
   is perfect.  Then $s(R) > 0$ if and only if $R$ is strongly $F$-regular.
\end{theorem}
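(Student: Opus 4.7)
The plan is to reformulate both strong $F$-regularity and the $F$-signature in terms of the ideals $I_{e}$ of Proposition~\ref{prop:Ie}, for which $a_{e} = \length_{R}(R/I_{e})$.

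\textbf{Reformulation.} By Theorem~\ref{thm.StrongFRegEquiv} together with locality, $R$ is strongly $F$-regular iff for every $0 \neq c \in R$ there exist $e$ and $\phi \in \Hom_{R}(R^{1/p^{e}},R)$ with $\phi(c^{1/p^{e}}) \notin \m$ (rescaling makes this value $1$, which is the form of Theorem~\ref{thm.StrongFRegEquiv}). This is precisely the statement that $c \notin I_{e}$ for some $e$. Hence
\[
R \text{ is strongly } F\text{-regular} \iff \bigcap_{e \geq 0} I_{e} = (0).
\]

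\textbf{The easy direction: $s(R) > 0 \Rightarrow$ strong $F$-regularity.} I would prove the contrapositive. If $R$ is not strongly $F$-regular, choose $0 \neq c \in \bigcap_{e} I_{e}$. Since $I_{e}$ is an ideal (Proposition~\ref{prop:Ie}) and contains $\m^{[p^{e}]}$, we have $I_{e} \supseteq (c) + \m^{[p^{e}]}$, so
\[
a_{e} \;\leq\; \length_{R}\bigl(R/((c)+\m^{[p^{e}]})\bigr) \;=\; \length_{R}\bigl((R/(c))/\m^{[p^{e}]}(R/(c))\bigr).
\]
Because $R$ is a $d$-dimensional domain and $c \neq 0$, the module $R/(c)$ has dimension $d-1$, and the standard Hilbert--Kunz asymptotic for finitely generated modules of dimension $\leq d-1$ gives $\length_{R}((R/(c))/\m^{[p^{e}]}(R/(c))) = O(p^{e(d-1)})$. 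Dividing by $p^{ed}$ and passing to the limit yields $s(R) = 0$.

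\textbf{The hard direction: strong $F$-regularity $\Rightarrow s(R) > 0$.} I would begin by fixing a test element $c$ coming from Lemma~\ref{lem.TestElementsExist}; strong $F$-regularity provides $e_{0} > 0$ and $\phi_{0} \in \Hom_{R}(R^{1/p^{e_{0}}},R)$ with $\phi_{0}(c^{1/p^{e_{0}}}) = 1$, equivalently a splitting $\psi_{0}(x) := \phi_{0}(c^{1/p^{e_{0}}} x)$ of $R \hookrightarrow R^{1/p^{e_{0}}}$. Iterating yields splittings $\psi_{0}^{m}$ of $R \hookrightarrow R^{1/p^{me_{0}}}$. The target is to establish a uniform estimate
\[
\length_{R}(I_{me_{0}}/\m^{[p^{me_{0}}]}) \;\leq\; C \cdot p^{me_{0}(d-1)}
\]
with $C$ independent of $m$. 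Combined with the Hilbert--Kunz asymptotic $\length_{R}(R/\m^{[p^{me_{0}}]}) = e_{\mathrm{HK}}(R)\, p^{me_{0} d} + O(p^{me_{0}(d-1)})$, this gives $a_{me_{0}}/p^{me_{0} d} \geq e_{\mathrm{HK}}(R) - C'/p^{me_{0}}$, so $s(R) \geq e_{\mathrm{HK}}(R) > 0$ (using Theorem~\ref{thm:sigexists} to pass from the subsequence $e = me_{0}$ to the full limit).

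\textbf{The main obstacle} is precisely the length bound on $I_{me_{0}}/\m^{[p^{me_{0}}]}$. The strategy is to leverage $\psi_{0}^{m}$: for any $r \in I_{me_{0}}$ and any $s \in R^{1/p^{me_{0}}}$, one has $\psi_{0}^{m}(s \cdot r^{1/p^{me_{0}}}) \in \m$. Using that $\psi_{0}^{m}$ is a splitting (so surjective) and exploiting the rigidity imposed by all such choices of $s$ together with the $R$-linearity of $\psi_{0}^{m}$, one aims to derive a containment of the form $c^{N} I_{me_{0}} \subseteq \m^{[p^{me_{0}}]}$ with $N$ depending only on $e_{0}$. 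Once this is in hand, $I_{me_{0}}/\m^{[p^{me_{0}}]}$ is annihilated by $c^{N}$ and so embeds in a Hilbert--Kunz quotient of the $(d-1)$-dimensional ring $R/(c^{N})$, giving the desired $O(p^{me_{0}(d-1)})$ bound. Carefully controlling how iterated compositions of $\phi_{0}$ interact with multiplication by roots of the test element $c$ is the crux of Aberbach--Leuschke's argument.
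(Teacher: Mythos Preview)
The paper does not include a proof of this theorem; it is stated with a citation to Aberbach--Leuschke and then the text moves directly on to Definition~\ref{def.SplittingPrime}. So there is nothing in the paper to compare your proposal against.

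That said, here is a brief assessment of your sketch on its own merits. Your reformulation via $\bigcap_{e} I_{e}$ and your argument for the easy direction ($s(R) > 0 \Rightarrow$ strongly $F$-regular) are correct and essentially the standard ones: picking $0 \neq c \in \bigcap_{e} I_{e}$ and bounding $a_{e}$ by a Hilbert--Kunz length of the $(d-1)$-dimensional module $R/(c)$ is exactly how this direction goes.

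For the hard direction, your outline is in the right spirit but is not a proof. The entire difficulty is concentrated in the asserted containment $c^{N} I_{me_{0}} \subseteq \m^{[p^{me_{0}}]}$ with $N$ independent of $m$, and you do not give a mechanism for producing it; the sentence beginning ``Using that $\psi_{0}^{m}$ is a splitting \ldots one aims to derive'' is a hope, not an argument. In particular, knowing only that $\psi_{0}^{m}(s \cdot r^{1/p^{me_{0}}}) \in \m$ for all $s$ does not by itself force $c^{N} r \in \m^{[p^{me_{0}}]}$ without further input (in a regular ring one would invoke Exercise~\ref{exer:testforbracketsinregrings}, but that is unavailable here). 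The original Aberbach--Leuschke argument is more delicate than your sketch suggests and passes through uniform test exponents and a careful comparison with tight closure of parameter ideals; you would need to supply that machinery, or an alternative, to complete this direction.
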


\begin{definition} \cite{AberbachEnescuStructureOfFPure}
\label{def.SplittingPrime}
  Suppose $R$ is $F$-pure.  If  $I_{e}$ is as in Proposition \ref{prop:Ie}, the ideal $P = \cap_{e \in \N}
  I_{e}$ is called the \emph{$F$-splitting prime} of $R$.
\end{definition}

\begin{starexercise} \cite{AberbachEnescuStructureOfFPure}, \cite{SchwedeCentersOfFPurity}
  Check that $P$ is a prime ideal, and that $\phi (P^{1/p^{e}})
  \subseteq P$ for all $\phi \in \Hom_{R}(R^{1/p^{e}},R)$.  In
  particular, conclude that $\tau(R) \subseteq P$.  Show
  that $P = \langle 0 \rangle$ if and only if $R$ is strongly $F$-regular.  More generally, show that $R/P$ is strongly $F$-regular.
\end{starexercise}

\begin{theorem} \cite{BlickleSchwedeTuckerFSignaturePairs} \cite{TuckerFSignatureExists}
 Let $(R, \m, k)$ be an $F$-pure local of dimension $d$.  Assume $k = k^{p}$
   is perfect.  Let $P$ be the $F$-splitting prime of $R$. Then the limit
\[
r_{F}(R) := \lim_{e \to \infty} \frac{a_{e}}{p^{e \dim(R/P)}}
\]
exists and is called the $F$-splitting ratio of $R$.  Furthermore, we
have $r_{F}(R) > 0$.
\end{theorem}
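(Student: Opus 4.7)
The plan is to deduce existence from Lemma \ref{lem:idealsand limits} applied to the quotient $\overline{R} = R/P$, and then argue positivity by combining the strong $F$-regularity of $\overline{R}$ with the structure of the restricted algebra of maps $\sC_{R}|_{\overline{R}}$ developed in Section \ref{sec.AlgebrasOfMaps}.

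For existence, write $\overline{R} = R/P$ with maximal ideal $\overline{\m}$, and for each $e$ set $\overline{I_{e}} := I_{e}/P \subseteq \overline{R}$; this is well-defined because $P = \bigcap_{e} I_{e} \subseteq I_{e}$, and we have $a_{e} = \length_{R}(R/I_{e}) = \length_{\overline{R}}(\overline{R}/\overline{I_{e}})$. I would verify the hypotheses of Lemma \ref{lem:idealsand limits} for the sequence $\{\overline{I_{e}}\}$ in the local ring $(\overline{R}, \overline{\m})$ of dimension $\dim(R/P)$. The containments $\overline{\m}^{[p^{e}]} \subseteq \overline{I_{e}}$ and $\overline{I_{e}}^{[p]} \subseteq \overline{I_{e+1}}$ are immediate from the analogues for $I_{e}$ in $R$; moreover, $F$-purity of $R$ forces $1 \notin I_{e}$ (any splitting $\phi$ sends $1 \mapsto 1$), so $\overline{I_{e}} \subsetneq \overline{R}$ is $\overline{\m}$-primary. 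Lemma \ref{lem:idealsand limits} then yields the existence of $r_{F}(R) = \lim_{e \to \infty} a_{e}/p^{e \dim(R/P)}$.

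For positivity, the preceding starexercise shows that $\overline{R}$ is strongly $F$-regular, hence $s(\overline{R}) > 0$ by Aberbach--Leuschke. However, a direct comparison gives the wrong inequality: since $P$ is $\sC_{R}$-compatible, every $\phi \in \Hom_{R}(R^{1/p^{e}}, R)$ descends to $\overline{\phi} \in \Hom_{\overline{R}}(\overline{R}^{1/p^{e}}, \overline{R})$, forcing $I_{e}(\overline{R}) \subseteq \overline{I_{e}}$ and hence $a_{e}(R) \leq a_{e}(\overline{R})$. To obtain the lower bound I would work instead with the restricted algebra of maps $\sD := \sC_{R}|_{\overline{R}} \subseteq \sC_{\overline{R}}$. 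The maximality of $P$ among proper $\sC_{R}$-compatible ideals translates into the assertion that $\overline{R}$ has no proper nonzero $\sD$-compatible ideal, i.e., the algebra-pair $(\overline{R}, \sD)$ is strongly $F$-regular; equivalently, for each $0 \neq \bar{c} \in \overline{R}$ there exist $e > 0$ and $\phi \in \Hom_{R}(R^{1/p^{e}}, R)$ such that $\phi(c^{1/p^{e}}) \in 1 + P$ is a unit, for any lift $c \in R$ of $\bar{c}$.

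The heart of the proof, and the main obstacle, is upgrading this qualitative $\sD$-strong-$F$-regularity of $\overline{R}$ into the asymptotic lower bound $\length_{\overline{R}}(\overline{R}/\overline{I_{e}}) \geq c \cdot p^{e \dim \overline{R}}$ for some $c > 0$. I would adapt Aberbach--Leuschke's proof of \cite[Theorem 0.2]{AberbachLeuschke} to this relative setting: fix an $F$-splitting $\phi_{0} \in \Hom_{R}(R^{1/p^{e_{0}}}, R)$ with $\phi_{0}(1) = 1$ (available by $F$-purity), pass to an affine open subscheme of $\Spec \overline{R}$ on which $\overline{R}$ is regular, and use iterates $\phi_{0}^{n}$ together with the $\sD$-strong-$F$-regularity to show that the contribution from this open subscheme to $\length_{\overline{R}}(\overline{R}/\overline{I_{e}})$ already grows at the rate $p^{e \dim \overline{R}}$. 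Were $r_{F}(R) = 0$, the Aberbach--Leuschke-style asymptotic would then force the existence of a nonzero element in $\bigcap_{e} \overline{I_{e}}$, contradicting $\bigcap_{e} \overline{I_{e}} = \bigl(\bigcap_{e} I_{e}\bigr)/P = P/P = 0$.
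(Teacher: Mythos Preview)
The paper does not supply a proof of this theorem; it is stated with citations to \cite{BlickleSchwedeTuckerFSignaturePairs} and \cite{TuckerFSignatureExists} and no argument is given in the text. So there is nothing in the paper to compare against directly, and one can only assess your outline on its own terms and against the cited sources.

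Your strategy is correct and matches the approach of those references. The existence half is complete: passing to $\overline{R} = R/P$ and applying Lemma~\ref{lem:idealsand limits} to the sequence $\overline{I_e} = I_e/P$ works exactly as you say, and the verification of the two containment hypotheses is routine. For positivity, your reduction is also the right one: $\overline{I_e}$ is precisely the splitting ideal $I_e^{\sD}$ for the restricted algebra-pair $(\overline{R}, \sD)$ with $\sD = \sC_R|_{\overline{R}}$, and the maximality of $P$ among proper $\sC_R$-compatible ideals (equivalently $\bigcap_e \overline{I_e} = 0$) is exactly the strong $F$-regularity of this pair in the sense of Section~\ref{sec.AlgebrasOfMaps}.

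What remains --- that strong $F$-regularity of $(\overline{R}, \sD)$ forces positivity of the limit $\lim_e a_e / p^{e \dim \overline{R}}$ --- is precisely the main content of \cite{BlickleSchwedeTuckerFSignaturePairs}, and here your sketch is underspecified. The phrase ``pass to a regular open subscheme and show its contribution already grows at rate $p^{e \dim \overline{R}}$'' hides the genuine difficulty: one must produce a \emph{uniform} comparison between the ideals $I_e^{\sD}$ and Frobenius powers of $\overline{\m}$ (roughly, an $e_0$ with $I_{e+e_0}^{\sD} \subseteq \overline{\m}^{[p^e]}$ for all $e$), and establishing this uniformity is where the graded-algebra structure of $\sD$ and a test element for the pair are used in an essential way. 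Your final contrapositive is the correct target, but as written you have asserted rather than proved the implication; since the survey itself defers this to the literature, that is perhaps appropriate, but you should be aware that this is where the work lies.
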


\subsection{Hilbert-Kunz(-Monsky) multiplicity}

Our goal in this section is to explore
a variant, introduced by P. Monsky, of the Hilbert-Samuel multiplicity of a ring.    Recall that the Hilbert-Samuel multiplicity of a local
ring $(R,\m,k)$ along an $\m$-primary ideal $I$ is simply
\[
e(I) := \lim_{n \to \infty} \frac{d!}{n^{d}} \length_{R}(R/I^{n}) \,
\, .
\]
The existence of the above limit follows easily from the fact that,
for sufficiently large $n$, $\length_{R}(R/I^{n})$ agrees with a
polynomial in $n$ of degree $d$.  Since this polynomial maps $\Z \to \Z$, it is easy to see that $e(I) \in \Z$.  When $I =
\m$, $e(R) := e(\m)$ is called the Hilbert-Samuel multiplicity of $R$.

Roughly speaking, the idea behind Hilbert-Kunz multiplicity is to
use the Frobenius powers $I^{[p^{e}]}$ of an ideal $I$, see Definition \ref{def.FrobeniusPowerOfIdeal}, in place of the ordinary powers $I^{n}$
in the definition of multiplicity.  For a somewhat different introduction to the Hilbert-Kunz multiplicity, see \cite[Chapter 6]{HunekeTightClosureBook}.

\begin{setting*}
In this subsection, unless otherwise specified, all rings are assumed to be \emph{local} integral domains essentially of finite type over a perfect field of characteristic $p > 0$.
\end{setting*}

\begin{theorem} \cite{MonskyHKFunction} \cite{KunzOnNoetherianRingsOfCharP}
\label{thm:hkexists}
Suppose $(R,\m,k)$ is a local ring of dimension
$d$ and characteristic $p>0$.  If $I$ is any $\m$-primary ideal, then
the limit
\[
e_{HK}(I) := \lim_{e\to\infty}\frac{1}{p^{ed}} \length_{R}(R/I^{[p^{e}]})
\]
exists and is called the \emph{Hilbert-Kunz multiplicity of $R$ along
  $I$}.  When $I = \m$, we write $e_{HK}(R) := e_{HK}(\m)$ and we refer to this number as the \emph{Hilbert-Kunz multiplicity of $R$.}
\end{theorem}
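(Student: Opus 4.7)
The strategy is to apply Lemma~\ref{lem:idealsand limits} directly to a suitably re-indexed version of the sequence $\{I^{[p^e]}\}_{e \geq 0}$. Observe that $\{I^{[p^e]}\}$ already satisfies the first hypothesis of that lemma, since $(I^{[p^e]})^{[p]} = I^{[p^{e+1}]}$. However, the second hypothesis $\m^{[p^e]} \subseteq I^{[p^e]}$ typically fails because $I \subseteq \m$ rather than $\m \subseteq I$. The plan is to exploit the fact that $I$ is $\m$-primary in order to shift the index and recover the required containment.

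\noindent\textbf{Key construction.} Since $I$ is $\m$-primary, there exists some $N$ with $\m^N \subseteq I$; choose $c \in \N$ so that $p^c \geq N$, giving $\m^{p^c} \subseteq I$. A basic property of Frobenius powers is that $a \in I$ implies $a^{p^e} \in I^{[p^e]}$ for all $e \geq 0$, which follows immediately from the identity $(\sum r_i f_i)^{p^e} = \sum r_i^{p^e} f_i^{p^e}$ in characteristic $p$ applied to generators of $I$. Applying this with $a = m^{p^c} \in I$ for each $m \in \m$ yields $m^{p^{e+c}} \in I^{[p^e]}$, so $\m^{[p^{e+c}]} \subseteq I^{[p^e]}$ for all $e \geq 0$. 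Now define
\[
J_e \, := \, \begin{cases} \m^{[p^e]} & \text{if } 0 \leq e < c, \\ I^{[p^{e-c}]} & \text{if } e \geq c. \end{cases}
\]
A routine verification confirms both hypotheses of Lemma~\ref{lem:idealsand limits}: the containment $J_e^{[p]} \subseteq J_{e+1}$ is an equality except possibly at the transition $e = c-1$, where it follows from $\m^{[p^c]} \subseteq \m^{p^c} \subseteq I$; and $\m^{[p^e]} \subseteq J_e$ is trivial for $e < c$ and is precisely the displayed containment for $e \geq c$. Each $J_e$ is $\m$-primary, so the lengths $\length_R(R/J_e)$ are finite.

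\noindent\textbf{Conclusion.} Lemma~\ref{lem:idealsand limits} yields the existence of $\lim_{e \to \infty} \length_R(R/J_e)/p^{ed}$. For $e \geq c$ we have $\length_R(R/J_e) = \length_R(R/I^{[p^{e-c}]})$, and substituting $e' = e - c$ gives
\[
\lim_{e' \to \infty} \frac{\length_R(R/I^{[p^{e'}]})}{p^{e'd}} \; = \; p^{cd} \, \lim_{e \to \infty} \frac{\length_R(R/J_e)}{p^{ed}},
\]
so the left-hand limit exists. This is the desired $e_{HK}(I)$. The substantive content has been absorbed entirely into Lemma~\ref{lem:idealsand limits}; given that technical input, the only subtlety is the index-shift bookkeeping required to pass from a general $\m$-primary ideal $I$ to the ``minimal'' input $\m$ for which the lemma's hypotheses hold on the nose. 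No further essential obstacle arises.
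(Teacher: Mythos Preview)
Your proposal is correct and is precisely the argument the paper intends: the paper does not give its own proof of Theorem~\ref{thm:hkexists} but cites Monsky's original Cauchy-sequence argument and then, after stating Lemma~\ref{lem:idealsand limits}, assigns as an exercise the deduction of $e_{HK}(I)$'s existence (for $R$ a domain, which is the paper's standing hypothesis) from that lemma. Your index-shift $J_e = \m^{[p^e]}$ for $e<c$ and $J_e = I^{[p^{e-c}]}$ for $e\geq c$ is exactly the bookkeeping needed to fit a general $\m$-primary $I$ into the lemma's hypotheses, and your verification of both conditions and the final rescaling by $p^{cd}$ are clean and complete.
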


Many basic properties of Hilbert-Kunz multiplicity (see \cite{MonskyHKFunction} or \cite{HunekeTightClosureBook}) mirror those for
Hilbert-Samuel multiplicity, such as the following:

\begin{itemize}
\item
If $I \subseteq J$ are $\m$-primary ideals, then $e_{HK}(I) \geq
e_{HK}(J)$.
\item
We always have $e_{HK}(R) \geq 1$ with equality when $R$ is regular.
\item
$e_{HK}(R) = \sum_{\mathfrak{p} \in \mathrm{Assh}(R)}
e_{HK}(R/\mathfrak{p})$
where $\mathrm{Assh}(R)$ denotes the set of prime ideals
$\mathfrak{p}$ of $R$ with $\dim(R/\mathfrak{p}) = \dim(R)$.
\item
\cite{WatanabeYoshidaHKMultAndInequality, HunekeYaoMinimalHKMultiplicity}
If $R$ is equidimensional, then $e_{HK}(R) = 1$ if and only if $R$ is
regular.
\item
If $I$ is generated by a regular sequence, then $e_{HK}(I) =
\length_{R}(R/I)$.
\end{itemize}

\begin{exercise}
  If $R$ is regular, show that
  $e_{HK}(I) = \length_{R}(R/I)$ for every $\m$-primary ideal $I$.
\end{exercise}

\begin{exercise}
\cite[Lemma 6.1]{HunekeTightClosureBook}
  Show that $e_{HK}(I) \geq \frac{1}{d!}e(I)$.  Note that this
  inequality is known to be sharp if $d \geq 2$ by \cite{HanesNotesOnHKFunction}.
\end{exercise}

Monsky's proof of the existence of Hilbert-Kunz multiplicity, however, bears little
resemblance to the proof of the existence of Hilbert-Samuel
multiplicity.  Indeed, the function $e \to \length_{R}(R/I^{[p^{e}]})$
frequently exhibits non-polynomial behavior, and $e_{HK}(R)$ need not be an integer.

\begin{example}
  Consider the characteristic 5 local ring
\[
R = \left( \mathbb{F}_{5}[w,x,y,z]/\langle w^{4} + x^{4} + y^{4} + z^{4}
  \rangle \right)_{\langle w, x, y, z \rangle} \, \, .
\]
C. Han and P. Monsky in have computed in
\cite{HanMonskySomeSurprisingHKFunctions} that
\[
\length_{R}(R/\m^{[p^{e}]}) = \frac{168}{61}(5^{e})^{3} -
\frac{107}{61} 3^{e}
\]
and, in particular, we have $e_{HK}(R) = \frac{168}{61}$.
\end{example}

In a sense, the proof of Theorem~\ref{thm:hkexists} is not
constructive: Monsky proceeds to show that $\{ \frac{1}{p^{ed}}
\length_{R}(R/I^{[p^{e}]}) \}_{e \in \N}$ is a Cauchy sequence.  As
such, the limit is only known to be a real number (and, in particular, not
necessarily even rational).  The computation of Hilbert-Kunz
multiplicity is widely considered to be a difficult problem.

\begin{exercise}
  Use Lemma~\ref{lem:idealsand limits} to show that $e_{HK}(I)$ exists when $R$ is a domain.
\end{exercise}

\begin{conjecture}
  \cite{MonskyTranscendenceOfSomeHKMultiplicities}, \cf \cite{BrennerRationalityOfHKMultiplicityInDim2} The Hilbert-Kunz multiplicity of the local ring
\[
\left(\mathbb{F}_{2}[x,y,z,u,v]/\langle uv + x^{3} + y^{3} + xyz
  \rangle \right)_{\langle x,y,z,u,v\rangle}
\]
is $\frac{4}{3} - \frac{5}{14 \sqrt{7}}$, and in particular not rational.
\end{conjecture}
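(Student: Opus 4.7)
The plan is to reduce the five-variable hypersurface computation to a three-variable (two-dimensional projective) one, and then extract the Hilbert--Kunz multiplicity from the spectral data of a Frobenius-theoretic recursion. Since this is a standing conjecture, the outline below describes the strategy that has proved successful for analogous computations (due to Han, Monsky, and later Brenner) rather than a completed proof.

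First, I would exploit the hypersurface shape $uv = f(x,y,z)$ with $f = x^{3}+y^{3}+xyz$ to reduce dimension. Writing $R = \mathbb{F}_{2}[u,v,x,y,z]/(uv - f)$ with maximal ideal $\mathfrak{m}$, filter $R/\mathfrak{m}^{[2^{e}]}$ by powers of $u$. Each graded piece $u^{i}R / (u^{i+1}R + \mathfrak{m}^{[2^{e}]})$ is, modulo controlled corrections, isomorphic to a quotient of $S := \mathbb{F}_{2}[x,y,z]/(f)$ by an explicit ideal involving $f^{\lfloor i/? \rfloor}$ and $\mathfrak{m}_{S}^{[2^{e}]}$. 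Summing over $i$ should express
\[
\ell_{R}\bigl(R/\mathfrak{m}^{[2^{e}]}\bigr) = 2^{e}\cdot \ell_{S}(S/\mathfrak{m}_{S}^{[2^{e}]}) + \text{lower-order corrections involving } \ell_{S}(S/(\mathfrak{m}_{S}^{[2^{e}]} + f^{j}S)),
\]
so that after dividing by $2^{4e}$ and passing to the limit, $e_{HK}(R)$ becomes a weighted sum of Hilbert--Kunz-type invariants of the graded two-dimensional cone $S$.

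Second, compute the Hilbert--Kunz data of $S$ via the projective curve $C = \{f=0\}\subseteq \mathbb{P}^{2}_{\mathbb{F}_{2}}$. In characteristic $2$ one has $f_{x} = x^{2}+yz$, $f_{y}=y^{2}+xz$, $f_{z}=xy$, and a direct check shows $C$ is smooth and hence an elliptic curve. Following Brenner and Trivedi, $\ell_{S}(S/\mathfrak{m}_{S}^{[2^{e}]})$ is governed by the Frobenius pullback of the syzygy bundle $\operatorname{Syz}(x,y,z)$ on $C$; similarly the auxiliary lengths $\ell_{S}(S/(\mathfrak{m}_{S}^{[2^{e}]}+f^{j}S))$ arise from syzygies twisted by $\O_{C}(3j)$. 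The key input is to identify the $j$-aggregated data as tracked by a single Han--Monsky representation of the divided-power ring attached to $f$.

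Third, diagonalize the Han--Monsky recursion. The explicit choice $f = x^{3}+y^{3}+xyz$ in characteristic $2$ puts one in the setting of \cite{HanMonskySomeSurprisingHKFunctions}: one obtains a linear recursion for the sequence of truncated lengths whose transfer matrix has characteristic polynomial with a factor whose roots generate $\mathbb{Q}(\sqrt{7})$. Plugging these roots into the closed form and collecting the dominant contribution yields the conjectured value $\tfrac{4}{3} - \tfrac{5}{14\sqrt{7}}$; the rational part $\tfrac{4}{3}$ arises from the generic (smooth cone) contribution, while the $\sqrt{7}$ correction encodes the supersingular/special behavior of this particular cubic over $\mathbb{F}_{2}$.

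The main obstacle, and the reason this remains conjectural, is step three: producing the exact Han--Monsky representation attached to $x^{3}+y^{3}+xyz$ in characteristic $2$, including all nilpotent pieces, and rigorously identifying its spectral invariants with the $\sqrt{7}$-eigenvalues that give the predicted closed form. In particular, while Brenner's results in \cite{BrennerRationalityOfHKMultiplicityInDim2} guarantee rationality for homogeneous two-dimensional rings over finite fields, the reduction above does not fit inside that framework because the auxiliary ideals $\mathfrak{m}_{S}^{[2^{e}]}+f^{j}S$ vary with $j$, so the Cauchy-sequence argument of Monsky must be supplemented by a delicate analysis of the $j$-dependence to rule out hidden cancellations that would force a rational limit.
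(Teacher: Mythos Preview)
The paper offers no proof of this statement: it is presented as an open conjecture of Monsky, with no argument attached. There is therefore nothing in the paper to compare your proposal against, and you correctly flag that what you have written is a strategy outline rather than a proof.

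That said, your outline contains a concrete error that undermines the strategy as written. You assert that the plane cubic $C = \{x^{3}+y^{3}+xyz = 0\} \subset \mathbb{P}^{2}_{\mathbb{F}_{2}}$ is smooth. It is not: in characteristic $2$ the partials are $f_{x}=x^{2}+yz$, $f_{y}=y^{2}+xz$, $f_{z}=xy$, and all three vanish at $[0:0:1]$, where the local equation $x^{3}+y^{3}+xy$ has lowest-order term $xy$. Thus $C$ is a nodal cubic. This is not an incidental slip; it is precisely the point of the example. Brenner's rationality theorem for two-dimensional standard graded domains (the reference you invoke) goes through the syzygy bundle on the associated smooth projective curve, and the conjectured irrationality here arises exactly because that machinery breaks down for the nodal cubic. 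So the portion of your plan that routes the computation through Frobenius pullbacks of syzygy bundles on a smooth elliptic curve cannot proceed as stated; Monsky's own heuristics instead use his $p$-fractal formalism for the singular cubic, and the $\sqrt{7}$ is expected to emerge from a recursion whose transfer operator genuinely has irrational spectral data, not from a correction to a smooth-curve computation.
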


\begin{example}
  Let $p > 2$ be a prime and consider the local rings
\[
R_{p,d} = \left( \overline{\mathbb{F}_{p}}[x_{0}, \ldots, x_{d}]/\langle \sum_{i=0}^{} x_{i}^{2}
  \rangle \right)_{\langle x_{0}, \ldots, x_{s} \rangle} \, \, .
\]
The numbers $e_{HK}(R_{p,d})$ have been explicitly computed
\cite{HanMonskySomeSurprisingHKFunctions} and -- even for a fixed~$d$
-- can depend on $p$ in a
complicated way.  For example, when $d = 4$ we have
\[
 \frac{29p^{2} + 15}{24 p^{2} + 12} \, \, .
\]
However, I. Gessel and P. Monsky have shown that
the $e_{HK}(R_{p,d})$ have a well-defined limit as $p \to \infty$ equal
to $1$ plus the coefficient of $z^{s}$ in the power series expansion of
$\sec z + \tan z$.

Perhaps one of the most interesting open problems aims to identify the
non-regular rings $R_{p,d}$ having the smallest Hilbert-Kunz multiplicity
possible.
\begin{conjecture}
\cite{watanabeyoshidaHK3dimlocalrings}
 Let $d \geq 1$ and $p > 2$  a prime number.  Let $R$ be $d$-dimensional
 characteristic $p$ unmixed local ring with residue field $
 \overline{\mathbb{F}_{p}}$. If $R$ is not regular, then $e_{HK}(R) \geq e_{HK}(R_{p,d})$ with
   equality if and only if it is formally isomorphic to $R_{p,d}$,
   {\itshape i.e.} their respective completions $\widehat{R} \simeq
   \widehat{R_{p,d}}$ are isomorphic.
\end{conjecture}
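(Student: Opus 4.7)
Since this is an open conjecture, my plan is necessarily speculative, and I will organize it around the three natural stages any proof would presumably have to clear: a reduction to a complete local hypersurface, an analysis of quadratic hypersurfaces, and the extraction of the uniqueness clause.

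First I would reduce to the complete case with algebraically closed residue field. Since $e_{HK}(R) = e_{HK}(\widehat{R})$, since unmixedness is preserved under completion, and since the equality clause is already phrased in terms of formal isomorphism, I may replace $R$ by $\widehat{R}$. Writing $\widehat{R} = k[[x_0, \ldots, x_n]]/I$ via Cohen's structure theorem with $k = \overline{\mathbb{F}_p}$, the goal becomes a statement about the pair $(n,I)$. In this setup $e_{HK}(R_{p,d}) \to 1$ rapidly as $p \to \infty$ by the Gessel--Monsky asymptotic, so the conjecture amounts to excluding every other unmixed non-regular singularity from a very narrow window above $1$.

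Second I would try to reduce to the hypersurface case $I = \langle f \rangle$ with $\mathrm{ord}(f) = 2$. The candidate minimum $R_{p,d}$ has embedding dimension $d+1$ and Hilbert--Samuel multiplicity $2$. Using $e_{HK}(R) \geq e(R)/d!$ together with strengthened bounds (e.g.\ in the style of Aberbach--Enescu and Huneke--Yao) I would try to argue that if $R$ has either $e(R) \geq 3$ or embedding codimension $\geq 2$, then $e_{HK}(R) > e_{HK}(R_{p,d})$. Having confined $R$ to a hypersurface $k[[x_0, \ldots, x_d]]/\langle f \rangle$ with $\mathrm{ord}(f) \geq 2$, I would then use $e_{HK}(R) \leq e(R) = \mathrm{ord}(f)$ for hypersurfaces together with the lower bound coming from the singular locus to force $\mathrm{ord}(f) = 2$. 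For a quadratic form of rank $r$ (meaningful since $p > 2$), I may diagonalize so that $f = x_0^2 + \cdots + x_{r-1}^2$ and $R \cong R_{p,r-1}[[y_1, \ldots, y_{d+1-r}]]$. A direct Frobenius--length computation, or alternatively Kn\"orrer periodicity applied to matrix factorizations of $f$, should show that $e_{HK}$ is strictly decreasing in $r$ along this family, so the minimum occurs at $r = d+1$, i.e.\ the non-degenerate quadric.

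Third, the equality clause follows from the two reductions: if $e_{HK}(R) = e_{HK}(R_{p,d})$ then $\widehat{R}$ must be a hypersurface defined by a non-degenerate quadratic form over $k = \overline{\mathbb{F}_p}$, and all such forms are equivalent under a $\mathrm{GL}_{d+1}(k)$ change of coordinates since $p \neq 2$, giving $\widehat{R} \simeq \widehat{R_{p,d}}$.

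The hard part, and the reason this conjecture remains open in general, is the middle step excluding non-hypersurface competitors. The currently available lower bounds on $e_{HK}$ in terms of embedding codimension, $F$-signature, or Hilbert--Samuel multiplicity are not strong enough to separate an arbitrary unmixed non-regular ring from $R_{p,d}$, particularly for large $p$ where the target minimum is extremely close to $1$. Even within the hypersurface stratum, the monotonicity in the rank $r$ of the quadratic form is believable but requires an honest calculation that goes beyond the soft inequalities available through the $F$-signature/test ideal formalism of the preceding sections; one likely needs to exploit the explicit module structure of $R^{1/p^e}$ over a quadric hypersurface (via Clifford algebras or matrix factorizations) to obtain the required strict inequality. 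This numerical sharpness is ultimately why the conjecture is known only in low dimension and a handful of special structural classes.
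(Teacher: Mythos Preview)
The paper contains no proof of this statement: it is explicitly stated as an open \emph{Conjecture} (attributed to Watanabe--Yoshida), and the only commentary the paper offers is that it is known in dimension at most six \cite{AberbachEnescuNewEstimatesForHK} and for complete intersections \cite{EnescuShimomotoUpperSCofHK}. You correctly recognized this and framed your proposal as a speculative research plan rather than a proof.

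Your outline is sensible and broadly consistent with how the known partial results proceed: reduce to the complete case, then try to force $R$ into the stratum of quadratic hypersurfaces via lower bounds on $e_{HK}$ in terms of embedding codimension and Hilbert--Samuel multiplicity, then classify within that stratum. You also correctly identify the genuine obstruction: the available lower bounds (\`a la Aberbach--Enescu) are not sharp enough to exclude all non-hypersurface competitors when $e_{HK}(R_{p,d})$ is very close to $1$. One minor correction: the inequality $e_{HK}(R) \geq e(R)/d!$ is too weak to be useful here on its own, and the sharper volume-type estimates you allude to are precisely what drive the low-dimensional results. But since there is nothing in the paper to compare against, there is no discrepancy to report.
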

\noindent
This conjecture is known to be true when $R$ has dimension at most
six, see \cite{AberbachEnescuNewEstimatesForHK}, and also when $R$ is
a complete intersection in arbitrary dimension in \cite{EnescuShimomotoUpperSCofHK}.  Finally also see \cite{SinghFSignatureOfAffineSemigroup}.
\end{example}



Many topics from previous sections share a close relationship with
so-called relative Hilbert-Kunz multiplicities, {\itshape i.e.} the
differences $e_{HK}(I) - e_{HK}(J)$ for pairs of $\m$-primary
ideals $I \subseteq J$.  For example, as seen below, these differences may be used to
test for tight closure and are closely related to the $F$-signature.

\begin{theorem}
\cite[Theorem 8.17]{HochsterHunekeTC1}
Assume $R$ is a complete local domain.  If  $I \subseteq J$
are two $\m$-primary ideals, then $e_{HK}(I) = e_{HK}(J)$ if and only
if $I^{*} = J^{*}$.
\end{theorem}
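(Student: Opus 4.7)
\emph{The plan} is to prove both implications, with the key auxiliary fact $e_{HK}(L) = e_{HK}(L^{*})$ for every $\m$-primary ideal $L$. Granted this, the direction $(\Leftarrow)$ is immediate: if $I^{*} = J^{*}$ then $e_{HK}(I) = e_{HK}(I^{*}) = e_{HK}(J^{*}) = e_{HK}(J)$.

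\emph{Proving the intermediate claim.} From the short exact sequence
\[ 0 \to (L^{*})^{[p^{e}]}/L^{[p^{e}]} \to R/L^{[p^{e}]} \to R/(L^{*})^{[p^{e}]} \to 0, \]
the claim reduces to showing $\length((L^{*})^{[p^{e}]}/L^{[p^{e}]}) = o(p^{ed})$. Using Lemma~\ref{lem.TestElementsExist} together with the fact that $L^{*}/L$ is finitely generated, I would fix a single nonzero $c \in R$ with $c\cdot(L^{*})^{[p^{e}]} \subseteq L^{[p^{e}]}$ for all $e$ (applying Lemma~\ref{lem.TestElementsExist} to each generator and taking a product, which is nonzero since $R$ is a domain). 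Then $(L^{*})^{[p^{e}]}/L^{[p^{e}]}$ is annihilated by $c$ and generated over $R$ by a bounded number of elements (the images of the $p^{e}$-th powers of a fixed generating set of $L^{*}/L$), hence is a finitely generated module over $R/(L^{[p^{e}]}+cR)$ with bounded number of generators. Since $R$ is a domain and $c \neq 0$, the quotient $R/cR$ has dimension $d-1$; applying Hilbert-Kunz growth (Theorem~\ref{thm:hkexists}) to $R/cR$ with the $\m$-primary ideal $L(R/cR)$ gives $\length(R/(L^{[p^{e}]}+cR)) = O(p^{e(d-1)})$, which vanishes after dividing by $p^{ed}$.

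\emph{The direction $(\Rightarrow)$.} Replacing $I$ and $J$ by their tight closures (justified by the intermediate claim) reduces us to proving that tightly closed ideals $I \subseteq J$ with $e_{HK}(I) = e_{HK}(J)$ must be equal. Suppose not, and pick $x \in J \setminus I = J \setminus I^{*}$. Writing $K_{e} := (I^{[p^{e}]} : x^{p^{e}})$, the exact sequence
\[ 0 \to R/K_{e} \xrightarrow{\cdot x^{p^{e}}} R/I^{[p^{e}]} \to R/(I+xR)^{[p^{e}]} \to 0 \]
together with $I \subseteq I+xR \subseteq J$ and monotonicity of $e_{HK}$ yields $\length(R/K_{e})/p^{ed} \to 0$.

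\emph{The main obstacle} is the converse of the key lemma: in a \emph{complete} local domain, if $\length(R/(I^{[p^{e}]}:x^{p^{e}}))/p^{ed} \to 0$ then $x \in I^{*}$. This is where completeness of $R$ is essential. My approach would be through Matlis duality: letting $E$ denote the injective hull of $R/\m$ and using the canonical module of $R$ (which exists because $R$ is complete local), one dualizes the ideals $K_{e}$ and identifies the direct limit of their Matlis duals with a controlled subobject of $H^{d}_{\m}(R)$. The asymptotic vanishing of colengths then forces $x$, acting on the dual side, into a closure which one identifies with tight closure via the existence of a completely stable test element (Remark~\ref{rem.CompletelyStableTestElement}); equivalently, one shows that for such a test element $c$ one has $cx^{p^{e}} \in I^{[p^{e}]}$ for all $e$, placing $x \in I^{*} = I$ and contradicting $x \in J \setminus I$.
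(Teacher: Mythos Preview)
Your argument for $(\Leftarrow)$ via the intermediate claim $e_{HK}(L)=e_{HK}(L^{*})$ is correct and is essentially the paper's hint: find a nonzero $c$ with $c(L^{*})^{[p^{e}]}\subseteq L^{[p^{e}]}$, observe that $(L^{*})^{[p^{e}]}/L^{[p^{e}]}$ is then a boundedly-generated module over $R/(cR+L^{[p^{e}]})$, and bound its length by a fixed multiple of $\length_{R/cR}\bigl((R/cR)/L^{[p^{e}]}(R/cR)\bigr)=O(p^{e(d-1)})$ since $\dim R/cR=d-1$. The paper phrases this with $I,J$ in place of $L,L^{*}$ but the content is identical.

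The paper does not prove $(\Rightarrow)$ at all, deferring entirely to the original reference. Your reduction for that direction is standard and correct through the conclusion that $\length(R/K_{e})/p^{ed}\to 0$ where $K_{e}=(I^{[p^{e}]}:x^{p^{e}})$. The gap is the last step: your Matlis-duality sketch is not a proof. Matlis duality preserves lengths, so the information $\length(R/K_{e})=o(p^{ed})$ is unchanged on the dual side; you have not explained how passing to $H^{d}_{\m}(R)$ produces a single nonzero $c$ lying in every $K_{e}$, nor why a test element should enter at that point. This is exactly where completeness is used in Hochster--Huneke, and their mechanism is quite different from what you outline: by Cohen's structure theorem one writes $R$ as a module-finite torsion-free extension of a complete regular local ring $A$, transfers the length estimate to $A$, and then over the regular ring uses freeness of $A^{1/p^{e}}$ to show that a sequence of cyclic modules annihilated by $\m^{[p^{e}]}$ with length $o(p^{ed})$ must admit a common nonzero annihilator $c$, giving $cx^{p^{e}}\in I^{[p^{e}]}$ for all $e$. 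That argument does not pass through local cohomology or duality in the way you describe.
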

\begin{proof}
The ($\Leftarrow$) direction is not difficult and is left as an exercise to the reader, along with the following hint:\\
\emph{Hint: } First show that there exists a $c \in R^{\circ}$ such that $c J^{[p^e]} \subseteq I^{[p^e]}$ for all $q \gg 0$.  Set $S = R/\langle c \rangle$ and consider its Hilbert-Kunz multiplicity with respect to $I$.  Finally show that there exists an integer $k$ such that $(S/(IS)^{[p^e]})^{\oplus k}$ can be mapped onto $J^{[p^e]} / I^{[p^e]}$ for all $e \geq 0$.
\end{proof}

\begin{theorem}
\cite[Proposition 15]{HunekeLeuschkeTwoTheoremsAboutMaximal}
If $R$ is a ring, then for any two
$\m$-primary ideals $I \subseteq J$
\begin{equation}
\label{eq:signaturerelativeHKinequality}
s(R) \leq \frac{e_{HK}(I) - e_{HK}(J)}{\length_{R}(J/I)} \, \, .
\end{equation}
\end{theorem}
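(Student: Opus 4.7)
The plan is to exploit the direct sum decomposition $R^{1/p^{e}} = R^{\oplus a_{e}} \oplus M_{e}$ from the definition of the $F$-splitting numbers, apply it to the modules $R^{1/p^{e}}/I R^{1/p^{e}}$ and $R^{1/p^{e}}/J R^{1/p^{e}}$, and then pass to the limit. The key observation is that, via the abstract isomorphism $R \cong R^{1/p^{e}}$, extending the ideals $I, J$ along the Frobenius inclusion yields $I R^{1/p^{e}} = (I^{[p^{e}]})^{1/p^{e}}$ and similarly for $J$, so we have the identifications $R^{1/p^{e}}/I R^{1/p^{e}} \cong (R/I^{[p^{e}]})^{1/p^{e}}$ and likewise for $J$.

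First, I would tensor the decomposition $R^{1/p^{e}} = R^{\oplus a_{e}} \oplus M_{e}$ with $R/I$ and $R/J$ over $R$ to obtain
\[
\length_{R}(R^{1/p^{e}}/I R^{1/p^{e}}) = a_{e}\length_{R}(R/I) + \length_{R}(M_{e}/I M_{e}),
\]
\[
\length_{R}(R^{1/p^{e}}/J R^{1/p^{e}}) = a_{e}\length_{R}(R/J) + \length_{R}(M_{e}/J M_{e}).
\]
Since $I \subseteq J$, there is a natural surjection $M_{e}/I M_{e} \twoheadrightarrow M_{e}/J M_{e}$, so $\length_{R}(M_{e}/I M_{e}) \geq \length_{R}(M_{e}/J M_{e})$. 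Subtracting the two equalities, and using $\length_{R}(R/I) - \length_{R}(R/J) = \length_{R}(J/I)$, gives
\[
\length_{R}(R^{1/p^{e}}/I R^{1/p^{e}}) - \length_{R}(R^{1/p^{e}}/J R^{1/p^{e}}) \geq a_{e}\length_{R}(J/I).
\]

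Next I would verify the length identification $\length_{R}((R/I^{[p^{e}]})^{1/p^{e}}) = \length_{R}(R/I^{[p^{e}]})$, and similarly for $J$. Because $k = k^{p}$ is perfect, a composition series of $R/I^{[p^{e}]}$ as an $R$-module (with factors $k$) transforms under $(\,\cdot\,)^{1/p^{e}}$ into a filtration with factors $k^{1/p^{e}} \cong k$, each of $R$-length one. Substituting into the inequality above yields
\[
\length_{R}(R/I^{[p^{e}]}) - \length_{R}(R/J^{[p^{e}]}) \geq a_{e}\length_{R}(J/I).
\]

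Finally, I would divide both sides by $p^{ed}$ and take the limit as $e \to \infty$; by Theorem~\ref{thm:hkexists} the left-hand side converges to $e_{HK}(I) - e_{HK}(J)$, while by Theorem~\ref{thm:sigexists} the coefficient $a_{e}/p^{ed}$ converges to $s(R)$. Dividing through by $\length_{R}(J/I)$ (which is finite and positive since $I$ and $J$ are distinct $\m$-primary ideals; the case $I = J$ is trivial) produces the claimed inequality. The only subtle step is the length identification via the perfect residue field hypothesis; everything else is formal manipulation of the decomposition defining $a_{e}$.
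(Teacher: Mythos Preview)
The paper does not supply a proof of this statement; it is simply stated with a citation to \cite{HunekeLeuschkeTwoTheoremsAboutMaximal} and followed immediately by a question. Your argument is correct and is precisely the approach used in that reference: decompose $R^{1/p^{e}} = R^{\oplus a_{e}} \oplus M_{e}$, tensor with $R/I$ and $R/J$, discard the nonnegative contribution $\length_{R}(M_{e}/IM_{e}) - \length_{R}(M_{e}/JM_{e})$, identify $\length_{R}(R^{1/p^{e}}/IR^{1/p^{e}})$ with $\length_{R}(R/I^{[p^{e}]})$ via the perfect residue field hypothesis, and pass to the limit using Theorems~\ref{thm:sigexists} and~\ref{thm:hkexists}.
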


\begin{question}
  Can one always find $\m$-primary ideals $I \subseteq J$ such that equality holds in
\eqref{eq:signaturerelativeHKinequality}?
\end{question}

\noindent
In many cases, such as when $R$ is $\Q$-Gorenstein, the above question
has a positive answer.  More generally, an affirmative response
would immediately imply Conjecture~\ref{conj:strong=weak} \cite{WatanabeYoshidaMinimal}.

\subsection{$F$-ideals, $F$-stable submodules, and $F$-pure centers}

Historically in commutative algebra, Frobenius has been used heavily to study local cohomology.
In particular, if $(R, \bm)$ is a local ring, the map $H^i_{\bm}(R) \to H^i_{\bm}(R^{1/p}) \cong H^i_{\bm}(R)$ is called the action of Frobenius on the local cohomology module $H^i_{\bm}(R)$ and denoted by $F$ (more generally, one also has a similar action on $H^i_{J}(R)$ for any ideal $J \subseteq R$).  Of course, one can iterate $F$, $e$-times, and obtain higher Frobenius actions $F^e : H^i_{\bm}(R) \to H^i_{\bm}(R)$.

\begin{remark}
If one is willing to use \Cech cohomology to write down specific elements of $H^i_J(R)$, then the Frobenius action can be understood as raising those elements to their $p$th power.  See \cite{SmithTestIdeals} for more details.
\end{remark}

Fix $F : H^i_{\bm}(R) \to H^i_{\bm}(R)$ consider now the following ascending chain of submodules.
\[
\ker F \subseteq \ker F^2 \subseteq \ker F^3 \subseteq \dots
\]
In \cite{HartshorneSpeiserLocalCohomologyInCharacteristicP} (also see \cite{LyubeznikFModulesApplicationsToLocalCohomology}, \cite{Gabber.tStruc} and \cite{BlickleBoeckleCartierModulesFiniteness}) it was shown that this ascending chain eventually stabilizes, even though the module in question is Artinian, and not generally Noetherian.  Set $N$ to be that stable submodule.
It is obvious that $F(N) \subseteq N$.  On the other hand, Karen Smith observed that $0^*_{H^{\dim R}_{\bm}(R)}$ is the unique largest submodule $M \subseteq H^{\dim R}_{\bm}(R)$, with non-zero annihilator such that $F(M) \subseteq M$, see \cite{SmithFRatImpliesRat}.  Motivated by this, she made the following definition:

\begin{definition}\cite{SmithTestIdeals}
An ideal $I \subseteq R$ is called an \emph{$F$-ideal} if $M_I = \Ann_{H^d_{\bm}(R)} I$ satisfies the condition $F(M_I) \subseteq M_I$.
\end{definition}

Suppose that $(R, \bm)$ is Gorenstein.  Then as in Subsection \ref{subsec.TestIdealsInGorRings}, we have a map $\Phi_R : R^{1/p} \to R$.  The Matlis dual of this map is $F : H^d_{\bm}(R) \to H^d_{\bm}(R)$ by local duality, see Theorem \ref{thm.LocalDuality}.

\begin{exercise}
Still assuming that $R$ is Gorenstein, prove that $I$ is an $F$-ideal if and only if $\Phi_R(I^{1/p}) \subseteq I$.
\end{exercise}

It turns out that this notion is very closely related to \emph{log canonical centers} in characteristic zero.  Motivated by this connection we define the following.

\begin{definition}\cite{SchwedeCentersOfFPurity} \cite{BrionKumarFrobeniusSplitting}
A prime ideal $Q \in \Spec R$ is called an \emph{$F$-pure center} if for every $e > 0$ and every $\phi \in \Hom_R(R^{1/p^e}, R)$, one has $\phi(Q^{1/p^e}) \subseteq Q$.  It is very common to also assume that $R_Q$ is $F$-pure.

More generally, given some fixed $\phi \in \Hom_R(R^{1/p^e}, R)$, an ideal $Q \in \Spec R$ is called an \emph{$F$-pure center of $(R, \phi)$}, if $\phi(Q^{1/p^e}) \subseteq Q$.
If additionally $\phi$ is a Frobenius splitting, then $Q$ (or the variety it defines) is called \emph{compatibly $\phi$-split}.
\end{definition}

\begin{exercise}
Suppose that $R$ is $F$-pure and $Q \in \Spec R$ is an \emph{$F$-pure center}.  Show that $R/Q$ is also $F$-pure.  Compare with \cite{KawamataSubadjunctionOne, AmbroSeminormalLocus, KollarKovacsLCImpliesDB} keeping in mind that $F$-pure singularities are closely related to log canonical singularities \cf Exercise \ref{ex.FpureImpliesLC}.
\end{exercise}

\begin{exercise} \cite{SchwedeCentersOfFPurity}, \cf \cite{AberbachEnescuStructureOfFPure}
Suppose that $R$ is $F$-pure and $Q \in \Spec R$ is an \emph{$F$-pure center} which is maximal with respect to inclusion.  Prove that $R/Q$ is strongly $F$-regular.  Compare with \cite{KawamataSubadjunction2}.  If $R$ is local, prove further that $Q$ is the splitting prime, Definition \ref{def.SplittingPrime}.
\end{exercise}

The structure of Frobenius stable submodules (and their annihilators) has been an important object of study in commutative algebra for several decades.  In particular, several questions about their finiteness have been asked, and also answered, see for example \cite{EnescuFInjectiveRingsAndFStablePrimes}, \cite{EnescuHochsterTheFrobeniusStructureOfLocalCohomology} and \cite{SharpGradedAnnihilatorsOfModulesOverTheFrobeniusSkewPolynomialRing}.  These questions are closely related to the finiteness of $F$-pure centers or compatibly $\phi$-split ideals.  See \cite{SchwedeFAdjunction, KumarMehtaFiniteness} for answers to this question and see \cite{SchwedeTuckerNumberOfFSplit, BlickleBoeckleCartierModulesFiniteness} for generalizations.

\begin{starexercise} \cite{SchwedeCentersOfFPurity}
 Suppose that $(X, \Delta)$ is a log $\bQ$-Gorenstein pair of any characteristic, see Definition \ref{def.logQGorenstein}.  Then a subscheme $Z \subseteq X$ is called a \emph{log canonical center} if there exists a proper birational map $\pi : \tld X \to X$ with $\tld X$ normal and a prime divisor $E$ on $\tld X$ such that $\pi(E) = Z$ and also such that the coefficient of $E$ in $K_{\tld X} - \pi^*(K_X + \Delta)$ is $-1$.

 Suppose now that $(X = \Spec R, \Delta)$ is a log $\bQ$-Gorenstein pair of characteristic $p > 0$ and that $\Delta = \Delta_{\phi}$ for some $\phi : R^{1/p^e} \to R^{1/p^e}$ as in (\ref{eq:divmaplbcorr}).  Show that every log canonical center of $(X, \Delta)$ is an $F$-pure center of $(R, \phi)$.
\end{starexercise}

\appendix \section{Canonical modules and duality}
\label{sec.appendixDuality}
{

\subsection{Canonical modules, Cohen-Macaulay and Gorenstein rings}

Throughout this section, we restrict ourselves to rings of finite type over a field $k$.  The generalization of the material in this section to rings of essentially finite type is obtained via localization, and so will be left to the reader as an exercise.  In particular, we assume that $R = k[x_1, \dots, x_n]/I = S/I$.

All the material in this section can be found in \cite{BrunsHerzog} or \cite{HartshorneResidues}.

\begin{definition}
Suppose that $R$ is as above and additionally that $R$ is equidimensional of dimension $d$.  Then we define the \emph{canonical module}, $\omega_R$ of $R$, to be the $R$-module
\[
\omega_R := \Ext^{n - d}_S(R, S).
\]
\end{definition}

We state several facts about canonical modules for the convenience of the reader.  Please see \cite{HartshorneLocalCohomology}, \cite{HartshorneResidues} or \cite{BrunsHerzog} for details and generalizations.

\begin{itemize}
\item[(1)]  If $R$ is a normal domain, then $\omega_R$ is isomorphic to an unmixed ideal of height one in $R$.  In particular, it can be identified with a divisor on $\Spec R$.  Any such divisor is called a \emph{canonical divisor}, see also Appendix \ref{sec.divisors}.
\item[(2)]  If $R = S/(f)$, then it is easy to check that $\omega_R \cong R = S/(f)$.  (Write down the long exact sequence computing $\Ext$).
\item[(3)]  The canonical module as defined seems to depend on the choice of generators and relations (geometrically speaking, it depends on the embedding).  In the context we are working in, $\omega_R$ is in fact unique up to isomorphism.  In greater generality, the canonical module is only unique up to tensoring with locally-free rank-one $R$-module.
\end{itemize}

\begin{definition}
With $R$ equidimensional of dimension $d$, we say that $R$ is \emph{Cohen-Macaulay} if $\Ext^i_S(R, S) = 0$ for all $i \neq n - d$.  We say that $R$ is \emph{Gorenstein} if $R$ is Cohen-Macaulay and additionally if for each maximal ideal $\bm \in \Spec R$ we have that $(\omega_R)_{\bm} \cong R_{\bm}$ (abstractly).
\end{definition}

For the reference of the reader we also recall some facts about Cohen-Macaulay and Gorenstein rings.

\begin{itemize}
\item[(i)]  In a Cohen-Macaulay ring, $\omega_R$ has finite injective dimension and so in a Gorenstein ring, $R$ has finite injective dimension.
\item[(ii)]  If $R = S/(f)$ then $R$ is Cohen-Macaulay and Gorenstein (this also holds if $R$ is a complete intersection).
\item[(iii)]  A regular ring is always Gorenstein, and in particular, it is Cohen-Macaulay.
\item[(iv)]  A local ring $(R, \bm)$ is Cohen-Macaulay if and only if $H^i_{\bm}(R) = 0$ for all $0 \leq i < \dim R$.
\item[(v)]  If $g \in R$ is a regular element and $R$ is local, then
  $R/\langle g \rangle$ is Gorenstein (respectively Cohen-Macaulay) if and only if $R$ is Gorenstein (respectively Cohen-Macaulay).\footnote{Notice that (v) + (iii) also implies (ii).}
\end{itemize}

Finally, we include one more definition.

\begin{definition}
A normal ring $R$ is called \emph{$\bQ$-Gorenstein} if the canonical module $\omega_R$, when viewed as a height-one fractional ideal $\omega_R \subseteq K(R)$, has a symbolic power $\omega_R^{(n)}$ which is locally free (for some $n > 0$).

Equivalently, after viewing $\omega_R \subseteq R$ as a fractional ideal, one may associate a divisor $K_R$ on $\Spec R$.  The symbolic power statement then is the same as saying that $nK_R$ is Cartier.
\end{definition}

\subsection{Duality}

In this section we discuss duality and transformation rules for canonical modules.  First we recall Matlis duality and the surrounding definitions.  Throughout this section, we restrict ourselves to rings essentially of finite type over a field.

\begin{definition}[Injective hull] \cite{BrunsHerzog, HartshorneLocalCohomology, HartshorneResidues, BrodmannSharpLocalCohomology}
Suppose that $(R, \bm)$ is a local ring.  An \emph{injective hull $E$ of the residue field $R/\bm$} is a an injective $R$-module $E \supseteq k$ that satisfies the following property:
\begin{itemize}
\item{}  For any non-zero submodule $U \subseteq E$ we have $U \cap k \neq \{ 0 \}$.
\end{itemize}
\end{definition}

Injective hulls of the residue field are unique up to non-unique isomorphism.  They are in a very precise sense the smallest injective module containing $E$, see \cite[Proposition 3.2.2]{BrunsHerzog} for additional discussion.

\begin{theorem}[Matlis Duality] \cite{BrunsHerzog, HartshorneLocalCohomology, HartshorneResidues, BrodmannSharpLocalCohomology}
 Suppose that $(R, \bm)$ is a local ring and that $E$ is the injective hull of the residue field.  Then the functor $\Hom_R(\blank, E)$ is a faithful exact functor on the category of Noetherian $R$-modules.  More-over, applying this functor twice is naturally isomorphic to the $\blank \tensor_R \hat{R}$ functor where $\hat{R}$ is the completion of $R$-along $\bm$.

Additionally and in particular, if $R$ is already complete, then $\Hom_R(\blank, E)$ induces an equivalence of categories between Artinian $R$-modules and Noetherian $R$-modules (and visa-versa).
\end{theorem}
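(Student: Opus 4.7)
The plan is to establish the three assertions in turn: exactness and faithfulness of $\Hom_R(\blank, E)$, the double-dual natural isomorphism $M \iso \Hom_R(\Hom_R(M,E), E) \tensor_R \hat R$ (more precisely, that the double dual computes $\blank \tensor_R \hat R$), and finally the equivalence of categories in the complete case.

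First I would note that exactness of $\Hom_R(\blank, E)$ is immediate from injectivity of $E$ (which holds by the very definition of an injective hull). For faithfulness on Noetherian modules, let $M \neq 0$ be Noetherian and pick $0 \neq x \in M$; write $I = \Ann_R(x) \subsetneq R$, so $I \subseteq \bm$. Since $k = R/\bm \subseteq E$ by definition of injective hull, any nonzero element of $k$ is annihilated by $\bm$, hence by $I$, yielding a nonzero map $R/I \to E$. Via the identification $Rx \cong R/I \subseteq M$ and injectivity of $E$, this extends to a nonzero map $M \to E$.

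The crucial computation is $\Hom_R(E, E) \cong \hat R$. The approach I would take is to filter $E$ by the submodules $E_n := \Ann_E(\bm^n)$, which are naturally $R/\bm^n$-modules and whose union is $E$ (this last point follows from the fact that every element of $E$ generates a submodule with nonzero socle, forcing every element to be $\bm$-power torsion). Each $E_n$ is the injective hull of $k$ over the Artinian local ring $R/\bm^n$, and over Artinian local rings one can directly verify $\Hom_{R/\bm^n}(E_n, E_n) \cong R/\bm^n$ by length considerations (both sides have the same length, and the natural ring map is injective). Any endomorphism of $E$ preserves each $E_n$, and so passing to the inverse limit gives $\Hom_R(E, E) \cong \varprojlim R/\bm^n = \hat R$. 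I expect this inverse limit identification to be the main technical obstacle, since it requires care about continuity of endomorphisms and the compatibility of the socle filtration with the ring of scalars.

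From there the double-dual statement follows formally: for $M = R$ we have $\Hom_R(R, E) = E$ and $\Hom_R(E, E) = \hat R = R \tensor_R \hat R$; for $M = R^n$ by additivity; and for general finitely generated $M$ by taking a finite presentation $R^m \to R^n \to M \to 0$, applying $\Hom_R(\blank, E)$ twice (using exactness), and invoking the Five Lemma with the naturality of the evaluation map. For the final equivalence of categories in the complete case, I would show the two sides of the duality are exchanged by $\Hom_R(\blank, E)$: the functor sends Noetherian modules to Artinian modules (reduce via presentations to the case of $R$, which goes to $E$ — an Artinian module since it has finite length socle-filtration quotients $E_n/E_{n-1}$ that are finite-dimensional over $k$) and vice versa (every Artinian $R$-module embeds in a finite direct sum of copies of $E$ because its socle is finite-dimensional over $k$, and dualizing such an embedding gives a surjection from a finitely generated free module). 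The double-dual identity proved above, together with completeness $R = \hat R$, then makes this an equivalence.
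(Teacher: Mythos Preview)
The paper does not actually prove this theorem; it is stated in the appendix purely as background, with citations to standard references (Bruns--Herzog, Hartshorne, Brodmann--Sharp) in lieu of a proof. So there is no paper argument to compare against.

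Your outline is the standard route found in those references and is essentially correct. One small point to tighten: your parenthetical reason that $E$ is Artinian (``since it has finite length socle-filtration quotients $E_n/E_{n-1}$'') is not sufficient on its own---an arbitrary union of finite-length pieces need not satisfy DCC. The clean way, once you already have $\Hom_R(E,E)\cong \hat R$, is to observe that submodules of $E$ correspond (via annihilators) to ideals of the Noetherian ring $\hat R$, whence DCC on submodules of $E$. Alternatively, deduce Artinianness of $\Hom_R(M,E)$ for finitely generated $M$ directly from the double-dual isomorphism you have already established (in the complete case), rather than checking it separately for $E$.
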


Now we state a special case of \emph{local-duality}.

\begin{theorem}[Local Duality] \cite[Chapter V, Section 6]{HartshorneResidues}, \cite{HartshorneLocalCohomology}, \cite{BrodmannSharpLocalCohomology}
\label{thm.LocalDuality}
 Suppose that $(R, \bm)$ is a local ring and that $E$ is the injective hull of the residue field. Then for any finitely generated $R$-module $M$:
\[
 \Hom_R( \Hom_R(M, \omega_R), E) \cong H^{\dim R}_{\bm}(M).
\]
In particular $H^{\dim_R}_{\bm}(\omega_R) \cong E$ so that if $R$ is Gorenstein then $H^{\dim R}_{\bm}(R) \cong E$.

Furthermore, if $R$ is Cohen-Macaulay, then we have a natural isomorphism
\[
 \Hom_R( \Ext_R^i(M, \omega_R), E) \cong H^{\dim R - i}_{\bm}(M)
\]
for all $i \geq 0$.
\end{theorem}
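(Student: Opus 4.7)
The plan is to reduce to the complete case and then invoke Grothendieck's abstract local duality for a dualizing complex, reading off the two asserted isomorphisms from the appropriate spectral sequence.

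First I would pass to the $\bm$-adic completion $\hat{R}$. For a finitely generated $R$-module $M$, both sides of each asserted isomorphism are unchanged: local cohomology satisfies $H^{i}_{\bm}(M) \cong H^{i}_{\bm\hat{R}}(M \otimes_{R} \hat{R})$ (since $H^{i}_{\bm}(M)$ is already $\bm$-torsion and hence naturally a module over $\hat R$), canonical modules commute with completion ($\omega_{R}\otimes_{R}\hat R \cong \omega_{\hat R}$), Ext is preserved under flat base change of $R$ by $\hat R$ on a finitely generated first argument, and the injective hull $E$ already has the structure of an $\hat R$-module. Thus I may assume $R$ is complete.

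Next, I would recall that a ring essentially of finite type over a field admits a normalized dualizing complex $\omega_{R}^{\bullet}$, concretely obtained from a presentation $R = S/I$ with $S$ a polynomial ring in $n$ variables by the formula $\omega_{R}^{\bullet} = R\Hom_{S}(R, S[n])$. This complex has finitely generated cohomology concentrated in degrees $-d, \ldots, 0$ (where $d = \dim R$), has finite injective dimension, and satisfies $H^{-d}(\omega_{R}^{\bullet}) \cong \omega_{R}$; when $R$ is Cohen-Macaulay, $\omega_{R}^{\bullet}$ is quasi-isomorphic to $\omega_{R}[d]$. Grothendieck's local duality theorem provides a natural derived-category isomorphism
\[
R\Gamma_{\bm}(M) \simeq \Hom_{R}\bigl(R\Hom_{R}(M, \omega_{R}^{\bullet}),\, E\bigr)
\]
for every finitely generated $M$. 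Taking the $i$-th cohomology of both sides and using exactness of Matlis duality yields
\[
H^{i}_{\bm}(M) \cong \Hom_{R}\bigl(H^{-i}(R\Hom_{R}(M, \omega_{R}^{\bullet})),\, E\bigr).
\]

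To conclude: for the first assertion I set $i = d$ and use the hypercohomology spectral sequence $E_{2}^{p,q} = \Ext^{p}_{R}(M, H^{q}(\omega_{R}^{\bullet})) \Rightarrow H^{p+q}(R\Hom_{R}(M, \omega_{R}^{\bullet}))$. Only $(p,q) = (0,-d)$ contributes in total degree $-d$ (since $H^{q}(\omega_R^{\bullet})$ vanishes for $q < -d$), giving $H^{-d}(R\Hom_{R}(M, \omega_{R}^{\bullet})) \cong \Hom_{R}(M, \omega_{R})$ and hence the first isomorphism. For the Cohen-Macaulay case the identification $\omega_{R}^{\bullet} \simeq \omega_{R}[d]$ directly produces $H^{-i}(R\Hom_{R}(M, \omega_{R}^{\bullet})) \cong \Ext^{d-i}_{R}(M, \omega_{R})$, completing the last claim. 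The particular assertion $H^{\dim R}_{\bm}(\omega_R) \cong E$ follows from the first statement by taking $M = \omega_R$ (noting $\Hom_R(\omega_R, \omega_R) \cong R$ in the normal case, and in general because $E \cong \Hom_R(R, E)$).

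The main obstacle is not the bookkeeping above but rather the existence of a normalized dualizing complex and the abstract derived-category form of local duality itself, which I would cite from Hartshorne's \emph{Residues and Duality} (Chapter V). Proving these from first principles would require establishing Grothendieck--Serre duality for the structure morphism $\Spec R \to \Spec k$; the strategy would be to verify it explicitly for the polynomial ring $S$ (where $\omega_{S}^{\bullet} = S[n]$ and local duality is a computation with \v{C}ech complexes and Matlis duality for Artinian modules) and then transfer to $R = S/I$ using the natural duality exchange $R\Hom_{S}(R, -)$.
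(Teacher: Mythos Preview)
The paper does not prove this theorem; it is stated with citations to Hartshorne's \emph{Residues and Duality}, Hartshorne's \emph{Local Cohomology}, and Brodmann--Sharp, and no argument is given. Your proposal is a correct outline of the standard proof found in those references: pass to the completion, invoke the derived-category form of local duality $R\Gamma_{\bm}(M) \simeq \Hom_R(R\Hom_R(M,\omega_R^{\bullet}),E)$, and read off the two statements via the edge of the hypercohomology spectral sequence (for general $R$) or the quasi-isomorphism $\omega_R^{\bullet}\simeq\omega_R[d]$ (for Cohen--Macaulay $R$).

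One small point: your justification of $H^{\dim R}_{\bm}(\omega_R)\cong E$ appeals to $\End_R(\omega_R)\cong R$, which is fine in the paper's standing hypotheses (integral domains essentially of finite type over a field), but the parenthetical ``in general because $E\cong\Hom_R(R,E)$'' does not by itself furnish the needed identification when $\End_R(\omega_R)$ might fail to equal $R$. In the present context this is harmless.
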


\begin{remark}
 If one is willing to work in the bounded derived category with finitely generated cohomology $D^{b}_{\coherent}(R)$, then one obtains the more general statement (without any Cohen-Macaulay hypothesis):
\[
\Hom_R( \myR \Hom_R^{\mydot}(M, \omega_R^{\mydot}), E) \qis \myR \Gamma_{\bm}(M).
\]
where $\omega_R^{\mydot}$ is the dualizing complex of $R$.
\end{remark}

Finally, we remark on the following transformation rule for the canonical module
\begin{theorem}
\label{thm.TransformationRuleCanonicalModulesFiniteMaps}
 Suppose that $R \subseteq S$ is a finite extension of normal rings essentially of finite type over a field $k$.  Then
\[
 \Hom_R(S, \omega_R) \cong \omega_S.
\]
\end{theorem}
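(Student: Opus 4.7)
Plan: The strategy is to present both $R$ and $S$ as quotients of a common polynomial ring $T_S$, express $\omega_R$, $\omega_S$, and $\Hom_R(S,\omega_R)$ all as $\Ext^c_{T_R}(S,T_R)$ for a suitable $c$, and derive this via two Grothendieck composition-of-functors spectral sequences. First, I would reduce to the case of finite-type $k$-algebras (localizing after the fact, since $\Hom$ and the Ext defining $\omega$ both commute with localization for finitely presented modules) and choose algebra generators $x_1,\ldots,x_a$ of $R$ over $k$ together with $R$-module generators $y_1,\ldots,y_k$ of $S$, producing compatible surjections $T_R=k[\mathbf{x}]\twoheadrightarrow R$ and $T_S=T_R[\mathbf{y}]\twoheadrightarrow S$ which intertwine $T_R\hookrightarrow T_S$ with $R\hookrightarrow S$. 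Setting $d=\dim R=\dim S$ (equal by integrality) and $c=a-d$, by definition $\omega_R=\Ext^c_{T_R}(R,T_R)$ and $\omega_S=\Ext^{c+k}_{T_S}(S,T_S)$.

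Next I would compare these Ext groups via two spectral sequences. Since $T_R=T_S/(\mathbf{y})$ is a complete intersection in $T_S$, the Koszul resolution gives $\Ext^j_{T_S}(T_R,T_S)=0$ for $j\ne k$ and $\Ext^k_{T_S}(T_R,T_S)\cong T_R$. The adjunction $\Hom_{T_S}(M,-)\cong\Hom_{T_R}(M,\Hom_{T_S}(T_R,-))$ for $T_R$-modules $M$ (valid because such $M$ are annihilated by $(\mathbf{y})$) together with the preservation of injectives by $\Hom_{T_S}(T_R,-)$ yields a Grothendieck spectral sequence that collapses, giving $\Ext^{i+k}_{T_S}(S,T_S)\cong\Ext^i_{T_R}(S,T_R)$ for all $i$; taking $i=c$ yields $\omega_S\cong\Ext^c_{T_R}(S,T_R)$.

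Applying the analogous adjunction $\Hom_{T_R}(M,-)\cong\Hom_R(M,\Hom_{T_R}(R,-))$ for $R$-modules $M$ (here $S$ is regarded as a $T_R$-module via $T_R\twoheadrightarrow R\hookrightarrow S$) produces
\[
E_2^{i,j}=\Ext^i_R(S,\Ext^j_{T_R}(R,T_R))\Longrightarrow\Ext^{i+j}_{T_R}(S,T_R).
\]
Since $T_R$ is Cohen--Macaulay and $I_R=\ker(T_R\twoheadrightarrow R)$ has height $c$ (as $R$ is an equidimensional domain of dimension $d$ in $T_R$ of dimension $a$), one has $\Ext^j_{T_R}(R,T_R)=0$ for $j<c$. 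Consequently $E_2^{i,j}=0$ for $j<c$; on the total-degree-$c$ diagonal only $E_2^{0,c}=\Hom_R(S,\omega_R)$ is nonzero, and every differential in or out of $E_r^{0,c}$ for $r\geq 2$ lands in the vanishing region. Hence $\Hom_R(S,\omega_R)=E_\infty^{0,c}=\Ext^c_{T_R}(S,T_R)\cong\omega_S$, naturally as $S$-modules since the $S$-action throughout is induced by the $S$-action on the first slot.

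The main obstacle is justifying the key vanishing $\Ext^j_{T_R}(R,T_R)=0$ for $j<c$ that forces the second spectral sequence to collapse at total degree $c$; this follows from the standard identity that grade equals height in a Cohen--Macaulay ring combined with $R$ being equidimensional as a domain. A secondary care-point is tracking the $S$-module structure through the chain of identifications: each step uses Ext groups whose first argument is an $S$-module, so the natural $S$-action carries through the isomorphisms, yielding an $S$-linear identification rather than merely an $R$-linear one.
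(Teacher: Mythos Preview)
Your proof is correct and takes a genuinely different route from the paper. The paper does not compute directly: it cites \cite[Theorem 3.3.7(b)]{BrunsHerzog} for the Cohen--Macaulay local case, then argues that both $\omega_S$ and $\Hom_R(S,\omega_R)$ are reflexive $S$-modules which are naturally isomorphic on the Cohen--Macaulay locus; since $R$ and $S$ are normal, the non--Cohen--Macaulay locus has codimension at least $2$, and two reflexive modules agreeing in codimension one are isomorphic. Your approach, by contrast, is a direct spectral-sequence computation that never invokes the Cohen--Macaulay case as a black box and in fact never uses normality---only that $R$ is a domain, so that $I_R$ has height exactly $c$ and the key vanishing $\Ext^j_{T_R}(R,T_R)=0$ for $j<c$ holds. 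The paper's argument is shorter and more geometric, making essential use of the normality hypothesis to bypass the non--Cohen--Macaulay points; your argument is self-contained, proves a somewhat more general statement, and makes the $S$-linearity of the isomorphism transparent at every step.
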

\begin{proof}
 This is contained in for example \cite[Theorem 3.3.7(b)]{BrunsHerzog} for Cohen-Macaulay local rings and the statement holds more generally for Cohen-Macaulay schemes.  In particular, both modules are automatically isomorphic (with a natural isomorphism) on the Cohen-Macaulay-locus.  But both modules are reflexive, and thus since the non-Cohen-Macaulay locus is of codimension at least 2, the modules are isomorphic.
\end{proof}

\begin{remark}
We will be applying Theorem \ref{thm.TransformationRuleCanonicalModulesFiniteMaps} to the case of the inclusion $R \subseteq R^{1/p}$.  While $R$ is finite type over $k$, $R^{1/p}$ is of finite type over $k^{1/p}$.  If $k$ is perfect, then $k^{1/p} = k$ and the inclusion $R \subseteq R^{1/p}$ can be interpreted as being $k$-linear (although with possibly different choices of generators and relations for $R^{1/p}$ over $k$).  If $k$ is not perfect, then $R^{1/p}$ need not be finite type over $k$, but it is if $[k^{1/p} : k] < \infty$ and again in this case the generators and relations for $R^{1/p}$ over $k$ maybe different than those over $k^{1/p}$.

However, as long as $[k^{1/p} : k] < \infty$, then it can be shown that $\omega_{R^{1/p}} \cong (\omega_R)^{1/p}$ or in other words that $\omega_{F_* R} \cong F_* \omega_R$.  Also see the discussion around condition ($\dagger$) in \cite[Page 921]{BlickleSchwedeTakagiZhang}.
\end{remark}

\begin{remark}
 If one is willing to work in the derived category $D^{b}_{\coherent}(R)$, then Theorem \ref{thm.TransformationRuleCanonicalModulesFiniteMaps} should be viewed as a generalization of the following special case of duality for a finite morphism where $M \in D^b_{\coherent}(S)$:
\[
 \myR \Hom_R^{\mydot}(M, \omega_R^{\mydot}) \cong \myR \Hom_S^{\mydot}(M, \omega_S^{\mydot}).
\]
Simply take $M = S$.
\end{remark}

In fact, there is the following generalization of the above remark.

\begin{theorem} [Grothendieck Duality]\cite{HartshorneResidues}
\label{thm.GrothendieckDuality}
Suppose that $f : Y \to X$ is a proper morphism of varieties over a field $k$.  Then $\omega_X^{\mydot}$ and $\omega_Y^{\mydot}$ exist and furthermore, for any coherent sheaf $\sM$ (or more generally object of $D^{b}_{\coherent}(X)$), we have a functorial isomorphism
\[
\myR \sHom_{\O_X}^{\mydot}( \myR f_* \sM, \omega_X^{\mydot}) \qis \myR f_* \myR \sHom_{\O_Y}^{\mydot}( \sM, \omega_Y^{\mydot})
\]
in $D^{b}_{\coherent}(X)$.
In particular, if we set $\sM = \O_Y$, we have an isomorphism:
\[
\myR \sHom_{\O_X}^{\mydot}( \myR f_* \O_Y, \omega_X^{\mydot}) \qis \myR f_* \omega_Y^{\mydot}.
\]
\end{theorem}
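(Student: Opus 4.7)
The plan is to follow the standard Hartshorne-style approach, constructing a right adjoint $f^!$ to $\myR f_*$ on the bounded derived category and then setting $\omega_Y^{\mydot} = f^! \omega_X^{\mydot}$, at which point the duality isomorphism becomes a formal consequence of the adjunction. First I would reduce to the existence of a functor $f^!: D^{+}_{\quasicoherent}(X) \to D^{+}_{\quasicoherent}(Y)$ equipped with a trace map $\tau_f \: \myR f_* f^! \to \id$, such that for all $\sM \in D^{b}_{\coherent}(X)$ the induced natural transformation
\[
\myR f_* \myR \sHom_{\O_Y}^{\mydot}(\blank, f^!\sM) \to \myR \sHom_{\O_X}^{\mydot}(\myR f_* (\blank), \sM)
\]
is an isomorphism. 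Given this, setting $\sM = \omega_X^{\mydot}$ and applying the functor to $\O_Y$ yields exactly the stated duality, upon defining $\omega_Y^{\mydot} := f^!\omega_X^{\mydot}$.

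The construction of $f^!$ proceeds by factoring $f$ locally via Chow's lemma and Nagata compactification into a composition of closed immersions and smooth (or projective) morphisms. For a closed immersion $i \: Z \hookrightarrow X$, one defines $i^!(\blank) = \myR \sHom_{\O_X}^{\mydot}(i_* \O_Z, \blank)$; for a smooth morphism $g$ of relative dimension $n$, one sets $g^!(\blank) = g^*(\blank) \tensor \wedge^n \Omega_{Y/X}[n]$. For projective space $\bP^n_X \to X$, the trace map and the duality isomorphism can be verified by hand using the explicit cohomology of $\O(d)$. The general case follows by composing these elementary cases.

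The main obstacle will be verifying that $f^!$ so constructed is independent of the chosen factorization, and that the traces compose compatibly: this is the classical headache of "pseudofunctoriality" of the twisted inverse image. One must check a collection of coherence diagrams (for compositions of closed immersions, of smooth morphisms, and of mixed factorizations) to ensure that different local factorizations produce canonically isomorphic functors. After this, one globalizes by descent along an affine open cover, using that both sides of the proposed duality isomorphism are sheaves that agree on an affine base. Finally, the restriction to $D^{b}_{\coherent}$ on both sides follows because $f$ is proper (so $\myR f_*$ preserves coherence) and $\omega_X^{\mydot}$ has coherent bounded cohomology (ensuring the same for $\omega_Y^{\mydot} = f^!\omega_X^{\mydot}$ via the local description above).

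For this survey, however, I expect the "proof" to consist simply of pointing the reader to \cite{HartshorneResidues} (or to Conrad's \emph{Grothendieck Duality and Base Change}), since a self-contained treatment requires developing the entire machinery of dualizing complexes and the twisted inverse image — far beyond the scope of an appendix reviewing duality for later use in the body.
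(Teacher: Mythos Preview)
Your final paragraph is exactly right: the paper gives no proof whatsoever, simply stating the theorem with the citation to \cite{HartshorneResidues} and moving directly to the corollary. The outline you sketch is the standard Hartshorne construction and is correct, but entirely superfluous here since the survey treats Grothendieck duality as a black box.
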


\begin{corollary}
\label{cor.FunctorialityOfOmega}
Suppose that $\pi : Y \to X$ is a proper morphism of varieties of the same dimension over a field $k$.  Then we have a natural map
\[
\pi_* \omega_Y \to \omega_X.
\]
\end{corollary}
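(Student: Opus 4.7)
The plan is to derive the map directly from Grothendieck duality (Theorem~\ref{thm.GrothendieckDuality}) applied to the unit of adjunction $\O_X \to \myR \pi_* \O_Y$, and then extract a map between ordinary sheaves by taking the appropriate cohomology sheaf.

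First I would apply the contravariant functor $\myR \sHom_{\O_X}^{\mydot}(\blank, \omega_X^{\mydot})$ to the natural morphism $\O_X \to \myR \pi_* \O_Y$ in $D^{b}_{\coherent}(X)$. This yields
\[
\myR \sHom_{\O_X}^{\mydot}(\myR \pi_* \O_Y, \omega_X^{\mydot}) \to \myR \sHom_{\O_X}^{\mydot}(\O_X, \omega_X^{\mydot}) \qis \omega_X^{\mydot}.
\]
Invoking Grothendieck duality on the source rewrites the left-hand side as $\myR \pi_* \omega_Y^{\mydot}$, producing a canonical morphism $\myR \pi_* \omega_Y^{\mydot} \to \omega_X^{\mydot}$ in $D^{b}_{\coherent}(X)$.

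Next I would pass to $(-d)$-th cohomology sheaves, where $d = \dim X = \dim Y$. Here I use the standard property of dualizing complexes on a variety of dimension $d$: $\mathcal{H}^i(\omega_X^{\mydot}) = \mathcal{H}^i(\omega_Y^{\mydot}) = 0$ for $i < -d$, and $\mathcal{H}^{-d}(\omega_X^{\mydot}) = \omega_X$, $\mathcal{H}^{-d}(\omega_Y^{\mydot}) = \omega_Y$. To identify $\mathcal{H}^{-d}$ of $\myR \pi_* \omega_Y^{\mydot}$, I would use the spectral sequence
\[
E_2^{p,q} = R^p \pi_* \mathcal{H}^q(\omega_Y^{\mydot}) \Rightarrow \mathcal{H}^{p+q}(\myR \pi_* \omega_Y^{\mydot}).
\]
At total degree $p+q = -d$, the only possibly non-vanishing term is $E_2^{0,-d} = \pi_* \omega_Y$; any differential entering or leaving this spot has source or target with $q < -d$ or $p < 0$ and hence vanishes. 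Consequently $\mathcal{H}^{-d}(\myR \pi_* \omega_Y^{\mydot}) \cong \pi_* \omega_Y$, and taking $\mathcal{H}^{-d}$ of the derived morphism above yields the desired natural map $\pi_* \omega_Y \to \omega_X$.

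The only subtle point — and what I would view as the main obstacle — is justifying the structural facts about $\omega_X^{\mydot}$ and $\omega_Y^{\mydot}$, namely their existence, boundedness, and the vanishing $\mathcal{H}^i(\omega^{\mydot}) = 0$ for $i < -\dim$ together with the identification of the top cohomology sheaf as $\omega$. These are standard in Hartshorne's \emph{Residues and Duality} for varieties over a field and can be cited directly; once they are in hand, the spectral sequence collapse and everything else is formal. Naturality of the constructed map follows from the naturality of both the unit map $\O_X \to \myR \pi_* \O_Y$ and the Grothendieck duality isomorphism.
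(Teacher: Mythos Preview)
Your proof is correct and follows essentially the same route as the paper: apply $\myR \sHom_{\O_X}^{\mydot}(\blank, \omega_X^{\mydot})$ to the unit map $\O_X \to \myR \pi_* \O_Y$, invoke Grothendieck duality to identify the source with $\myR \pi_* \omega_Y^{\mydot}$, and extract the map on sheaves by taking $\mathcal{H}^{-d}$ via the spectral sequence. If anything, you have spelled out the spectral sequence collapse more carefully than the paper does.
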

\begin{proof}
Consider the natural map $\O_X \to \myR \pi_* \O_Y$ and apply the contra-variant Grothendieck-duality functor $\myR \sHom_{\O_X}(\blank, \omega_X^{\mydot})$, use the second half of Theorem \ref{thm.GrothendieckDuality} and then take cohomology (the fact that $\myH^{-\dim Y} \myR f_* \omega_Y^{\mydot} \cong \pi_* \omega_Y$ follows by analyzing the associated spectral sequence).
\end{proof}

\section{Divisors}
\label{sec.divisors}

In this section, we review divisors on normal algebraic
varieties.  Divisors are sometimes a stumbling block for commutative
algebraists trying to apply the techniques of algebraic geometry.  As such,
this appendix is designed to serve as a reference for divisors
for those already familiar with commutative algebra.
For those more geometrically inclined, please read \cite[Chapter II, Section 6]{Hartshorne} or \cite{HartshorneGeneralizedDivisorsOnGorensteinSchemes}.

We treat divisors here only in the case of an affine variety and
describe them using symbolic powers.  The generalization to non-affine
varieties is left to the reader.  Of course within the broader field of algebraic geometry, the formalism of divisors
is most useful in the study of projective (non-affine) varieties.

\begin{definition}
Suppose that $R$ is a normal domain of finite type over a field, and
let $X = \Spec R$ be the corresponding normal affine algebraic
variety.  Then a \emph{prime divisor} on $X$ is a  codimension 1
subvariety of $X$, and a \emph{Weil divisor on $X$} is a formal
$\mathbb{Z}$-linear combination of prime divisors.  In other words,
a Weil divisor is an element of the free Abelian group on the set of all
prime divisors.
\end{definition}

\begin{remark}
A prime divisor is exactly the same data as a height one prime ideal
$P \subseteq R$.  More generally, a Weil divisor can be viewed as the
combined data of a finite set of height-one prime ideals with formal
coefficients $n_i$.  In other words $D = \sum n_i V(P_i)$ where the
$P_i$ are prime ideals and $V(P_i) = \{ Q \in \Spec R \, | \, P_{i}
\subseteq Q \} = \Spec(R/P_{i}) \subseteq X$ is the vanishing locus of $P_i$.
\end{remark}

Given a prime $P \in \Spec R$, and an integer $n > 0$, we use $P^{(n)}
= (P^{n}R_{P}) \cap R$ to denote the $n$-th symbolic power of $P$.  If
$n = 0$, then $P^{(n)} = R$.  If $n < 0$ then $P^{(n)}$ is the
fractional ideal which is the inverse to $P^{(|n|)}$.  Explicitly,
$P^{(n)} = \{ \, x \in K(R) \,  | \, xP^{(|n|)} \subseteq R \, \}$.

\begin{definition}
Given a divisor $D = \sum n_i V(P_i)$ on an algebraic variety $X =
\Spec R$, the sheaf $\O_X(D)$ is simply the coherent sheaf of
$\O_{X}$-modules associated with the
fractional ideal $\bigcap_i P_i^{(-n_i)}$. In particular, note also
that $\O_X(-D)$ is determined by $\bigcap_i P_i^{(n_i)}$.
\end{definition}

Because the category of coherent sheaves of $\O_X$-modules is equivalent to the category of finitely generated $R$-modules, in what follows we will treat $\O_X(D)$ as if it was a fractional ideal and not a sheaf.

\begin{remark}
 A key property of $\O_X(D) = \bigcap_i P_i^{(-n_i)}$ is that it is S2 as an $R$-module.  Because it has full dimension and $R$ is normal, this means that it is also reflexive as an $R$-module (in other words, applying the functor $\Hom_R( \blank, R)$ twice yields an isomorphic module).
Therefore if $\O_X(D) = \bigcap_i P_i^{(-n_i)}$ and $\O_X(E) = \bigcap_i P_i^{(-m_i)}$ then $\O_X(D + E) = \bigcap_i P_i^{(-n_i - m_i)}$ is the largest ideal that agrees with $\O_X(D) \cdot \O_X(E)$ at all the height-one-primes of $R$.  See \cite{HartshorneGeneralizedDivisorsOnGorensteinSchemes} for additional discussion.
\end{remark}

\begin{definition}
\label{def.PropertiesOfWeilDivisors}
 We now list some common properties/prefixes associated to Weil divisors.  Suppose that $D = \sum n_i V(P_i)$ is a Weil divisor on $X = \Spec R$.
\begin{itemize}
 \item[(1)]  $D$ is called \emph{effective} if all of the $n_i$'s are non-negative.
 \item[(2)]  $D$ is called \emph{Cartier} if for every maximal (equivalently prime) ideal $\bm \in \Spec R$, $( \O_X(D))_{\bm}$ is a principal ideal.  In other words, if $\bigcap_i P_i^{(-n_i)}$ is locally principal.
 \item[(3)]  $D$ is called \emph{reduced} if all of the $n_i$'s are equal to $1$.
 \item[(4)]  $D$ is called \emph{$\bQ$-Cartier} if there exists an integer $n > 0$ such that $nD$ is Cartier.  Equivalently, this means $\left( \bigcap_i P_i^{(-n_i)} \right)^{(n)}$ is locally principal.  The \emph{index} of a $\bQ$-Cartier divisor is the smallest such $n$.
 \item[(5)]  $D$ is called a \emph{canonical divisor} if $\O_X(D)$ is (abstractly) isomorphic to a canonical module of $R$.
 \item[(6)]  A reduced divisor is said to have \emph{normal crossings} if it is Cartier, and for each $\bq \in \Spec R$ containing the ideal $\O_X(-D)$, $R_{\bq}$ is regular and $(\O_X(-D))_{\bq} = (\bigcap_i P_i^{(n_i)})_{\bq}$ is an ideal generated by a product of minimal generators of the maximal ideal of $R_P$.  If each $R/P_i$ is a regular ring, we then say that $D$ has \emph{simple normal crossings}.
 \item[(7)]  Two divisors $D$ and $E$ are said to be \emph{linearly equivalent} if $\O_X(D)$ is abstractly isomorphic to $\O_X(E)$.  In that case, we write $D \sim E$.
 \item[(8)]  The non-zero elements $x \in \Gamma(X, \O_X(D))$ are in bijective correspondence with effective divisors linearly equivalent to $D$.
\end{itemize}
\end{definition}

\begin{exercise}
 Prove that every divisor in a regular ring is Cartier and that every pair of Cartier divisors in a regular local ring are linearly equivalent.
\end{exercise}

\begin{exercise}
 Prove that $V( \langle x, y \rangle )$ on $\Spec k[x,y,z]/(x^2 - yz)$ is not Cartier but is $\bQ$-Cartier.
\end{exercise}

Finally we also describe $\mathbb{Q}$-divisors.

\begin{definition}
 A \emph{$\mathbb{Q}$-divisor} is a formal sum $\sum_i n_i D_i = \sum_i n_i V(P_i)$ of prime divisors with rational coefficients $n_i \in \mathbb{Q}$.  The set of $\bQ$-divisors also form a group under addition.
\end{definition}

\begin{remark}
 It is also very natural to define $\bR$-divisors.  We won't do that here however.
\end{remark}

\begin{definition}
 We now state some common terminology with $\bQ$-divisors.

Fix a $\mathbb{Q}$-divisor $\Delta = \sum n_i D_i$
\begin{itemize}
 \item[(i)]  $\Delta$ is called \emph{effective} if all of the $n_i$'s are non-negative.
 \item[(ii)]  We define $\lceil \Delta \rceil = \sum_i \lceil n_i \rceil D_i$, likewise $\lfloor \Delta \rfloor = \sum_i \lfloor n_i \rfloor D_i$.
 \item[(iii)]  When dealing with a $\bQ$-divisor $D$, we say that $D$ is \emph{integral} if $D$ is simultaneously a Weil-divisor and a $\bQ$-divisor (in other words, if all the $n_i$ are integers).
 \item[(iv)]  We say that $\Delta$ is a $\bQ$-Cartier divisor if there exists an integer $n > 0$ such that $n \Delta$ is an integral divisor and a Cartier divisor.  The \emph{index} of a $\bQ$-Cartier $\bQ$-divisor is the smallest such $n$.
 \item[(v)]  We say that two $\bQ$-divisors $\Delta_1$ and $\Delta_2$ are $\bQ$-linearly equivalent if there exists an integer $n > 0$ such that $n \Delta_1$ and $n \Delta_2$ are linearly equivalent integral Weil-divisors.  In this case we write $\Delta_1 \sim_{\bQ} \Delta_2$.
\end{itemize}
\end{definition}

\section{Glossary and diagrams on types of singularities}

{\small

We collect the various measures of and types of singularities in characteristic $p$.  Most of these are mentioned in the paper.
First we display a diagram explaining the relationship between the various singularity classes in characteristic zero and characteristic $p > 0$.
\[
\scriptsize
\xymatrix{
\text{Log Terminal} \ar@/^2.7pc/@{<=}[r]^-{\text{+ Gor.}} \ar@/^2pc/@{<=>}[rrr] \ar@{=>}[r] \ar@{=>}[d]& \text{Rational} \ar@{=>}[d]
\ar@/^2pc/@{<=>}[rrr] & & \text{
$F$-Regular} \ar@/^2.7pc/@{<=}[r]^-{\text{+ Gor.}} \ar@{=>}[r] \ar@{=>}[d] & \text{$F$-Rational} \ar@{=>}[d]\\
\text{Log Canonical} \ar@2{=>}[r] \ar@/_2pc/@{<=}[rrr]_{\text{$\Rightarrow$ conj.}} \ar@/_2.7pc/@{<=}[r]_-{\text{+ Gor. \& normal}} & \text{Du Bois}
\ar@/_2pc/@{<=}[rrr]_{\text{$\Rightarrow$ conj.}} & & \text{$F$-Pure/$F$-Split}
\ar@{=>}[r] \ar@/_2.7pc/@{<=}[r]_-{\text{+ Gor.}} & \text{$F$-Injective}\\
& & & & \\
}
\]
The left-square is classes of singularities in characteristic zero.  The right side is classes of singularities in characteristic $p > 0$.  An arrow $A \Rightarrow B$ between two classes of singularities means that all singularities of type $A$ are also of type $B$.  For example, the arrow $\text{Log terminal} \Rightarrow \text{Rational}$ means that all log terminal singularities are rational.

The connecting arrows between the two squares are via reduction to characteristic $p$, see \cite{SmithFRatImpliesRat,HaraRatImpliesFRat,MehtaSrinivasRatImpliesFRat,HaraWatanabeFRegFPure,SchwedeFInjectiveAreDuBois}.  There has been some progress on the conjectural direction between $F$-pure and log canonical singularities, \cite{HaraDimensionTwo, ShibutaTakagiLCThresholds, HernandezLCvsFPurity,MustataSrinivasOrdinary,MustataOrdinary2,TakagiAdjointIdealsAndACorrespondence}.

It should be noted that some of the implications on the characteristic zero side are highly non-trivial, see \cite{ElkikRationalityOfCanonicalSings,KovacsRat,SaitoMixedHodge,KollarKovacsLCImpliesDB}.
}

\subsection{Glossary of terms}
\label{subsec.Glossary}
{\small
\begin{flushdesc}
 \item[(Big) test ideal]  Given an $F$-finite reduced ring $R$ of characteristic $p$, the \emph{(big) test ideal} $\tau(R) = \tau_b(R)$ is defined to be the smallest ideal $I \subseteq R$, not contained in any minimal prime, such that $\phi(I^{1/p^e}) \subseteq I$ for all $e \geq 0$ and all $\phi \in \Hom_R(R^{1/p^e}, R)$.  It also coincides with $\bigcap_{M} (0 :_R 0^*_M)$ where $M$ runs over all $R$-modules.  The big test ideal is closely related to the multiplier ideal in characteristic zero, see \cite{SmithMultiplierTestIdeals,HaraInterpretation} for the original statements and see  \cite{HaraYoshidaGeneralizationOfTightClosure,TakagiInterpretationOfMultiplierIdeals} for generalizations to pairs.

\item[(Big/finitistic) test element]  An element $r \in R$ which is not contained in any minimal prime of $R$ is called a \emph{finitistic test element} if $z \in I^*$ implies that $cz^{p^e} \in I^{[p^e]}$ for all $e \geq 0$.  It is called a \emph{big test element} if $z \in 0^*_M$ implies that $0 = c^{1/p^e} \tensor z \in R^{1/p^e} \tensor M$ for all $R$-modules $M$.  It is an open question whether these two definitions are equivalent.  See \cite{HochsterHunekeTC1}, \cite{LyubeznikSmithCommutationOfTestIdealWithLocalization} and \cite{HochsterFoundations}.

\item[Completely stable (finitistic) test element]  An element $r$ in a local ring $R$, such that $r$ is not contained in any minimal prime of $R$, is called a \emph{completely stable test (finitistic) element} if it is a finitistic test element and if it remains a finitistic test element after both localization and completion.

 \item[Dense $F$-XXX type]  For a given class of singularities ``$F$-XXX'' in characteristic $p$, a characteristic zero scheme $X$ (or pair as appropriate) is said to have \emph{dense $F$-XXX type} if for all sufficiently large finitely generated $\bZ$-algebras $A$ and families $X_A \to \Spec A$ of characteristic $p$-modules of $X$ (in particular, the generic point of that family agrees with $X$ up to field-base-change), there exists a Zariski-dense set of maximal ideals $\bq \in \Spec A$ such that the fiber $X_{\bq}$ has $F$-XXX singularities.

 \item[Divisorially $F$-regular]  The combined information of a normal variety $X$ in characteristic $p > 0$ and an effective reduced divisor $D \subseteq X$ is called \emph{divisorially $F$-regular} if for every $c \in R$, not vanishing on any component of $D$, there is an $R$-linear map $\phi : R^{1/p^e} \to R$, for some $e > 0$, which sends $c^{1/p^e}$ to $1$.  Divisorially $F$-regular pairs were introduced in \cite{HaraWatanabeFRegFPure}.  Unfortunately for the terminology, divisorially $F$-regular pairs correspond to \emph{purely log terminal} singularities in characteristic zero, see \cite{TakagiPLTAdjoint}.

 \item[$F$-finite]  A reduced ring of characteristic $p > 0$ is called \emph{$F$-finite} if $R^{1/p}$ is a finite $R$-module.  Every ring essentially of finite type over a perfect field is $F$-finite.  See \cite{KunzOnNoetherianRingsOfCharP}.

 \item[$F$-injective]  A reduced $F$-finite ring of characteristic $p > 0$ is called \emph{$F$-injective} if the natural map $H^i_{\bm}(R) \to H^i_{\bm}(R^{1/p})$ is injective for every $i \geq 0$ and every maximal ideal $\bm \subseteq R$.  See \cite{FedderFPureRat}.  $F$-injective rings are closely related to rings with \emph{Du Bois singularities} in characteristic zero, \cite{SchwedeFInjectiveAreDuBois}.

 \item[$F$-ideal]  Suppose that $(R, \bm)$ is a local ring.  An ideal $I \subseteq R$ is called an \emph{$F$-ideal} if $M_I := \Ann_{H^d_{\bm}(R)} I$ satisfies the condition $F(M_I) = M_I$, see \cite{SmithTestIdeals}.

\item[$F$-jumping number]  Suppose that $R$ is an $F$-finite reduced ring and $\ba \subseteq R$ is an ideal.  The \emph{$F$-jumping numbers} of the pair $(R, \ba)$ are the real numbers $t \geq 0$ such that $\tau(R, \ba^t) \neq \tau(R, \ba^{t - \varepsilon})$ for all $\varepsilon > 0$.

\item[$F$-pure]  A reduced ring of characteristic $p > 0$ is called \emph{$F$-pure} if for every $R$-module $M$, the map $M \to M \tensor_R R^{1/p}$ is injective.  If $R$ is finite type over a perfect field, then this is equivalent to the condition that $R \to R^{1/p}$ splits as a map of $R$-modules.  $F$-purity was first introduced by Hochster and J. Roberts in \cite{HochsterRobertsFrobeniusLocalCohomology}.  $F$-pure rings are closely related to rings with \emph{log canonical singularities} in characteristic zero, \cite{HaraWatanabeFRegFPure}.  See \cite{HaraWatanabeFRegFPure, TakagiInversion,SchwedeTestIdealsInNonQGor} for generalizations to pairs/triples.

\item[$F$-pure center (a.k.a. center of $F$-purity)]  Suppose that $R$ is an $F$-finite ring.  A prime ideal $Q \in \Spec R$ is called an \emph{$F$-pure center} if for every $e > 0$ and every $\phi \in \Hom_R(R^{1/p^e}, R)$, one has $\phi(Q^{1/p^e}) \subseteq Q$.  It is very common to also assume that $R_Q$ is $F$-pure.  See \cite{SchwedeCentersOfFPurity}.

\item[$F$-pure threshold]  For a given pair $(R, \ba)$, the \emph{$F$-pure threshold of $(R, \ba)$} is the real number $\sup \{ s \geq 0 | (R, \ba^s) \text{ is $F$-pure} \}.$  It was introduced in \cite{TakagiWatanabeFPureThresh} and is an analog of the log canonical threshold.  Also compare with the $F$-threshold introduced \cite{MustataTakagiWatanabeFThresholdsAndBernsteinSato}.

\item[$F$-threshold]  For a given pair $(R, \ba)$ with $R$ is a local ring with maximal ideal $\bm$, the \emph{$F$-threshold} of the pair is the limit $\lim_{e \to \infty} { \max \{r | \ba^r \nsubseteq \bm^{[p^e]} \} \over p^e }.$  This coincides with the $F$-pure threshold when $R$ is regular, but is distinct otherwise.  It was introduced in \cite{MustataTakagiWatanabeFThresholdsAndBernsteinSato}.

\item[$F$-rational]  An $F$-finite reduced local ring $R$ is called \emph{$F$-rational} if it is Cohen-Macaulay and there is no proper non-zero submodule $J \subseteq \omega_R$ such that dual to the Frobenius map $F_* \omega_{R} \to \omega_R$ sends $F_* J$ back into $J$.  Equivalently, $R$ is $F$-rational if all ideals generated by a full system of parameters are tightly closed.  See \cite{FedderWatanabe} for the original definitions.  $F$-rational singularities are closely related to \emph{rational singularities} in characteristic zero, see \cite{SmithFRatImpliesRat}, \cite{HaraRatImpliesFRat} and \cite{MehtaSrinivasRatImpliesFRat}.

\item[$F$-regular]  A ring $R$ of characteristic $p > 0$ is called \emph{$F$-regular} if all ideals in all localizations of $R$ are tightly closed, see \cite{HochsterHunekeTC1}.

\item[$F$-split]  A scheme $X$ of finite type over a perfect field characteristic $p > 0$ is called \emph{$F$-split (or Frobenius split)} if the Frobenius map $\O_X \to \O_X^{1/p}$ splits.  If $X$ is affine and $F$-finite, this is the same as $F$-pure.  The use of $F$-splittings to study the global geometry of schemes was introduced in \cite{MehtaRamanathanFrobeniusSplittingAndCohomologyVanishing} and \cite{RamananRamanathanProjectiveNormality}.  Also see \cite{HaboushKempfVanishing} and \cite{BrionKumarFrobeniusSplitting}.  $F$-split projective schemes are closely related to \emph{log Calabi-Yau pairs} in characteristic zero \cite{SchwedeSmithLogFanoVsGloballyFRegular}.

\item[$F$-signature]  Suppose that $(R, \bm)$ is a local ring of characteristic $p > 0$ with perfect residue field $R/\bm$.  Then the \emph{$F$-signature} of $R$, denoted $s(R)$, is the limit $\lim_{e \to \infty} a_e / p^{ed}$ where $a_e$ is the number of free $R$-summands of $R^{1/p^e}$; in other words $R^{1/p^e} = R^{\oplus a_e} \oplus M$.  It was first defined explicitly in \cite{HunekeLeuschkeTwoTheoremsAboutMaximal}, also see \cite{SmithVanDenBerghSimplicityOfDiff}.  The limit was shown to exist by the second author in \cite{TuckerFSignatureExists}.

\item[$F$-stable submodule]  Suppose $M$ is an $R$-module with a Frobenius action $F : M \to M$ (an additive map such that $F(rx) = r^p F(x)$).  Most typically $M = H^{\dim R}_{\bm}(R)$ and $F$ is the induced action of Frobenius.  Then an $F$-stable submodule $N \subseteq M$ is a submodule $N \subseteq M$ such that $F(N) \subseteq N$.

\item[(FFRT) Finite $F$-representation type]  Suppose we are given an $F$-finite complete local ring $(R, \bm)$ and consider the Krull-Schmidt decomposition $R^{1/p^e} = M_{1, e} \oplus \dots \oplus M_{n_e, e}$ of $R^{1/p^e}$ for each $e$.  We say that $R$ has \emph{finite $F$-representation type}, or simply \emph{FFRT}, if the set of isomorphism classes of the $M_{i, e}$ is finite (as $e$ varies).  Rings with FFRT were introduced by \cite{SmithVanDenBerghSimplicityOfDiff}.  It is worth mentioning that tight closure commutes with localization in rings with FFRT \cite{YaoModulesWithFFRT}.

\item[(Finitistic) test ideal]  Given a ring $R$ of characteristic $p$, the \emph{finitistic test ideal} $\tau_{\textnormal{fg}}(R)$ is defined to be $\bigcap_{I \subseteq R} (I : I^*)$ where $I^*$ is the tight closure of $I$.  Classically, finitistic test ideals were known simply as test ideals, see \cite{HochsterHunekeTC1}.

\item[Frobenius action]  Suppose that $R$ is a ring and $M$ is an $R$-module.  A \emph{Frobenius action on $M$} (or simply an \emph{$F$-action}) is an additive map $f : M \to M$ satisfying the rule $f(r.m) = r^p.m$ for all $r \in R$ and $m \in M$.

\item[Generalized test ideal]  In the literature, the test ideal $\tau(R, \ba^t)$ (or more generally for more complicated pairs and triples) is often called the \emph{generalized test ideal}.  At one point it was believed that the generalized test ideal was not made up of ``test elements'', but it is made up of appropriately defined test elements, see Exercise \ref{ex.SharpTestElements} and \cite{SchwedeSharpTestElements}.

\item[Globally $F$-regular]  A scheme $X$ of characteristic $p > 0$ is called \emph{globally $F$-regular} if for every effective divisor $D$ there exists an $e > 0$ such that the Frobenius map $\O_X \to (\O_X(D))^{1/p^e}$ splits.  If $X$ is affine and $F$-finite, this is the same as strongly $F$-regular.  Globally $F$-regular varieties were introduced in \cite{SmithGloballyFRegular}.  Globally $F$-regular projective schemes are closely related to \emph{log Fano pairs} in characteristic zero \cite{SchwedeSmithLogFanoVsGloballyFRegular}.

\item[Hartshorne-Speiser-Lyubeznik (HSL)-number]  Given a local ring $(R, \bm)$, then the \emph{Hartshorne-Speiser-Lyubeznik number} of $R$ (or simply the \emph{HSL-number}) is the smallest natural number $e \geq 0$ such that the kernel of the local cohomology $e$-iterated Frobenius $\ker \left( H^{\dim R}_{\bm}(R) \to H^{\dim R}_{\bm}(F^e_* R) \right)$ is equal to the kernel $\ker \left( H^{\dim R}_{\bm}(R) \to H^{\dim R}_{\bm}(F^{e+1}_* R)\right)$.  It is always a finite number by \cite{HartshorneSpeiserLocalCohomologyInCharacteristicP, LyubeznikFModulesApplicationsToLocalCohomology}.  See \cite[Definition 3.14]{SharpGradedAnnihilatorsOfModulesOverTheFrobeniusSkewPolynomialRing} where this definition is generalized to any Artinian $R$-module with a Frobenius action, instead of simply $H^{\dim R}_{\bm}(R)$.

\item[Hilbert-Kunz(-Monsky) multiplicity]  Given a local ring $(R, \bm)$ of characteristic $p > 0$ and an $\bm$-primary ideal $\ba$, the \emph{Hilbert-Kunz multiplicity of $\ba \subseteq R$} is defined to be $\lim_{e \to \infty} \textnormal{length}_R(R/\ba^{[p^e]})$, it is denoted by $e_{\textnormal{HK}}(\ba, R)$.  Although originally thought not to always exist, \cite[Page 1011]{KunzOnNoetherianRingsOfCharP}, Monsky showed it did indeed always exist in \cite{MonskyHKFunction}.  It is however notoriously difficult to compute.  There are many connections between the Hilbert-Kunz multiplicity and tight closure of ideals as well as the $F$-signature.

\item[(Open) $F$-XXX type]  For a given class of singularities ``$F$-XXX'' in characteristic $p$, a characteristic zero scheme $X$ (or pair as appropriate) is said to have \emph{dense $F$-XXX type} if for all sufficiently large finitely generated $\bZ$-algebras $A$ and families $X_A \to \Spec A$ of characteristic $p$-modules of $X$ (in particular, the generic point of that family agrees with $X$ up to field-base-change), there exists an open and Zariski-dense $U \subseteq \Spec A$ such that for all maximal ideals $\bq \in U$, the fiber $X_{\bq}$ has $F$-XXX singularities.

\item[Sharply $F$-pure]  A variant of $F$-purity for pairs introduced in \cite{SchwedeSharpTestElements}.  This variant uses the $\lceil (p^e - 1)\Delta \rceil$ or $\lceil t(p^e - 1) \rceil$-exponent instead of $\lfloor t(p^e - 1) \rfloor$ or $\lceil tp^e \rceil$.  One aspect that distinguishes it from previous $F$-purity definitions for pairs, see \cite{HaraWatanabeFRegFPure} \cite{TakagiInversion}, is that the test ideal of a sharply $F$-pure pair is always a radical ideal.  See \cite[Corollaries 3.15 and 4.3]{SchwedeSharpTestElements} as well as \cite{SchwedeRefinementOfSharplyFPureAndStrongFReg} and \cite{HernandezNewNotionsOfFPurity} for further refinements.

\item[Splitting prime]  Given an $F$-finite $F$-pure local ring $(R, \bm)$ of characteristic $p > 0$, the \emph{splitting prime} $\sP \subseteq R$ is defined to be the set $\{ c \in R | \phi(c^{1/p^e}) \in \bm, \forall e > 0, \forall \phi \in \Hom_R(R^{1/p^e}, R) \}$.  The splitting prime was introduced by Aberbach-Enescu in \cite{AberbachEnescuStructureOfFPure}.  It closely related to a \emph{minimal log canonical center} in characteristic zero, see \cite{SchwedeCentersOfFPurity}.  As an ideal, it is the largest \emph{$F$-pure center}.

\item[Strongly $F$-regular]  An $F$-finite reduced ring of characteristic $p > 0$ is called \emph{strongly $F$-regular} if for every $c \in R$ not contained in a minimal prime, there is an $R$-linear map $\phi : R^{1/p^e} \to R$, for some $e > 0$, which sends $c^{1/p^e}$ to $1$, see \cite{HochsterHunekeTightClosureAndStrongFRegularity}.  Strongly $F$-regular rings are closed related to rings with \emph{log terminal singularities} in characteristic zero, see \cite{HaraWatanabeFRegFPure}.  See \cite{HaraWatanabeFRegFPure, TakagiInversion,SchwedeTestIdealsInNonQGor} for generalizations to pairs/triples.  For generalizations to the non-$F$-finite setting, see \cite{HochsterFoundations}.

\item[Strong test ideal]  A \emph{strong test ideal} for a ring $R$ is any ideal $J$ (not contained in any minimal prime) such that $JI^* = JI$ for all ideals $I \subseteq R$.  They were introduced in \cite{HunekeStrongTestIdeals}.  The strong test ideal coincides with the test ideal in many cases, see \cite{HaraSmithTheStrongTestIdeal} and \cite{VraciuStrongTestIdeals}.  Additional generalizations to modules can be found in \cite{EnescuStrongTestModulesAndMultiplierIdeals}.

\item[Strongly $F$-pure]  A variant of $F$-purity for pairs introduced in \cite{HaraWatanabeFRegFPure}.  A pair $(R, \Delta)$ is strongly $F$-pure if there exists $\phi \in \Hom_R(F^e_* R(\lfloor p^e \Delta \rfloor), R) \subseteq \Hom_R(F^e_* R, R)$ such that $\phi(F^e_* R) = R$.

\item[Test ideal]  See \emph{big test ideal} or \emph{finitistic test ideal}.

\item[Test element]  See \emph{big/finitistic test element}.

\item[Tight closure]  The \emph{tight closure $I^*$} of an ideal $I \subseteq R$ is the set of $z \in R$ such that there exists $c \in R$, but not in any minimal prime of $R$, so that $c z^{p^e} \in I^{[p^e]}$ for all $e \geq 0$.  See \cite{HochsterHunekeTC1}.

\item[$q$-Weak test element]  An element $c \in R$ not in any minimal prime is called a \emph{$q$-weak test element} if for all $p^e \geq q = p^d$, we have that $c x^{p^e} \in I^{[p^e]}$ for all $x \in I^*$ as $I$ ranges over all ideals of $R$, see \cite[Section 6]{HochsterHunekeTC1}.

\item[Weakly $F$-regular]  A ring $R$ of characteristic $p > 0$ is called \emph{weakly $F$-regular} if all ideals are tightly closed, see \cite{HochsterHunekeTC1}.  It is an open question whether weakly $F$-regular rings are strongly $F$-regular.
\end{flushdesc}

}

\bibliographystyle{skalpha}
\bibliography{CommonBib}

\end{document}